\newtheorem{theorem}{Theorem}
\newtheorem{lemma}[theorem]{Lemma}
\newtheorem{proposition}[theorem]{Proposition}
\newtheorem{remark}[theorem]{Remark}
\newenvironment{proof}[1][Proof]{\textbf{#1.} }{\ \rule{0.5em}{0.5em}}
\renewcommand{\geq}{\geqslant}
\def \Hf    {\mathfrak{H}}
\def\1{{\mathbf{1}}}
\def\1{{\mathbf{1}}}
\def\0.5{{\frac{1}{2}}}
\renewcommand{\thefootnote}{\fnsymbol{footnote}}
\DeclareMathOperator{\e}{e}
\def \Hf    {\mathcal{H}}
\begin{document}

\qquad \renewcommand{\thefootnote}{\arabic{footnote}}

\begin{center}
{\Large \textbf{Berry-Esséen bounds for parameter estimation of general
Gaussian processes }} \\[0pt]
~\\[0pt]
Soukaina Douissi\footnote{%
Cadi Ayyad University, Marrakesh, Morocco. Email: \texttt{%
douissi.soukaina@gmail.com}}, Khalifa Es-Sebaiy\footnote{%
National School of Applied Sciences - Marrakesh, Cadi Ayyad University,
Marrakesh, Morocco. Email: \texttt{k.essebaiy@uca.ma}} and Frederi G. Viens
\footnote{%
Department of Statistics and Probability, Michigan State University 619 Red
Cedar Rd East Lansing, MI 48824, USA. E-mail: \texttt{viens@msu.edu}}\\[0pt]
\textit{Cadi Ayyad University and Michigan State University}\\[0pt]
~\\[0pt]
\end{center}

{\ \noindent \textbf{Abstract:} We study rates of convergence in central
limit theorems for the partial sum of squares of general Gaussian sequences,
using tools from analysis on Wiener space. No assumption of stationarity,
asymptotically or otherwise, is made. The main theoretical tool is the
so-called Optimal Fourth Moment Theorem \cite{NP2015}, which provides a
sharp quantitative estimate of the total variation distance on Wiener chaos
to the normal law. The only assumptions made on the sequence are the
existence of an asymptotic variance, that a least-squares-type estimator for
this variance parameter has a bias and a variance which can be controlled,
and that the sequence's auto-correlation function, which may exhibit long
memory, has a no-worse memory than that of fractional Brownian motion with
Hurst parameter }$H<3/4$.{\ \ Our main result is explicit, exhibiting the
trade-off between bias, variance, and memory. We apply our result to study
drift parameter estimation problems for subfractional Ornstein-Uhlenbeck and
bifractional Ornstein-Uhlenbeck processes with fixed-time-step observations.
These are processes which fail to be stationary or self-similar, but for
which detailed calculations result in explicit formulas for the estimators'
asymptotic normality.\vspace*{0.1in}}

{\noindent }\textbf{Key words}: Central limit theorem; Berry-Esseen;
stationary Gaussian process; Nourdin-Peccati analysis; parameter estimation;
fractional Brownian motion; long memory.\vspace*{0.1in}

\noindent \textbf{2010 Mathematics Subject Classification}: 60F05; 60G15;
60H05; 60H07, 62F12.

\section{Introduction}

This paper presents the estimation of the asymptotic variance, when it
exists, of a general Gaussian sequence, and applies it to non-stationary
stochastic processes based on fractional noises, which can have long memory.
We choose to work with a simple method based on empirical second moments,
which is akin to the discretization of a least-squares method for the
corresponding processes using discrete observations with no in-fill
assumption (fixed time step).

Our work responds to the recent preprint \cite{EV}, which was the first
instance in which this type of estimation was performed without in-fill
assumptions on the data, but which insisted on keeping the data stationary
or asymptotically so. That paper \cite{EV}, available on arXiv, contains an
extensive description of the literature on parameter estimation for Gaussian
stochastic processes. We do not repeat that analysis here. Instead, we list
a number of recent articles which deal with various versions of
least-squares-type parameter estimation for fractional-noise-driven
Ornstein-Uhlenbeck processes (fOU-type), since that class of processes is
similar to the various examples we consider in this article: \cite{HN}, \cite%
{AM}, \cite{AV}, \cite{EEV}, \cite{HS}, \cite{BI}, \cite{NT}, \cite{BEO},
\cite{EEO} and \cite{EET}. respectively. Those papers are described in the
context of our motivations in \cite{EV}. We now highlight the distinction
between our paper and \cite{EV}.

In that work, the assumption of stationarity of the data was weakened by
asking for asymptotic stationarity, where the deviation from a stationary
model converged to 0 exponentially fast. This strong assumption only allowed
to work with specific examples which are close to stationary. In that
article, Ornstein-Uhlenbeck models were covered, as long as the driving
noise was stationary, as it would be for standard fractional Brownian
motion. That paper was the first instance in which parameters for fOU-type
processes could be estimated with asymptotic normality using only discrete
data with fixed time step. However, this range of application in \cite{EV}
extended to non-stationary processes which benefitted from an exponential
ergodicity property. One leaves this range of applicabilith when driving OU
equations with noises which are not stationary to begin with. This is our
primary motivation in this article: to develop a framework in which no
assumption of stationarity is made at the level of driving noise processes.
Instead, we investigate a set of arguably minimal assumptions on how to
estimate an asymptotic variance, if it exists, for any discretely observed
Gaussian process. In this article, we make no a-priori assumption about the
speed of convergence of the data to a stationary model, only asking that
convergence occur.

This implies that the main result of the paper, our estimator's asymptotic
normality, applies to some of the more exotic non-stationary covariance
structures, for which the estimator properties cannot be handled by any
classical or current method. Our main result isolates three terms in order
to construct estimates which are fully quantitative. We show that the
Wasserstein distance between the CLT scaling of our discrete-observation
least-squares estimator of the asymptotic variance and a normal law is
bounded above by the sum of a bias term, a variance term, and a term which
comes from an application of the optimal fourth moment theorem of Nourdin
and Peccati \cite{NP2015}, and depends only on an upper bound on the
covariance structure of the discrete observations. This allows us to
formulate clear assumptions on what is needed to turn this estimate into a
quantiative asymptotic normality theorem for the estimator, in Wasserstein
distance. The advantage of working in this fashion is that one has an
immediate way of identifying possible mean-variance-correlation trade-offs,
since the corresponding three terms are explict and are simply added
together as a measure of how fast the estimator converges to a normal. There
are additional features to our method which are described in Section 3, such
as our ability to separate our minimal convergence assumptions between what
is needed for strong consistency and what is needed in addition to obtain
asymptotic normality: see Hypotheses $(\mathcal{H}1)$ thru $(\mathcal{H}4)$
in Section 3, the bullet points that follow, and the statements of Theorems %
\ref{thm cv in law of Fn} and \ref{berry ess. theta gen. case} in the same
section.

When applying our general method from Section 3 to specific examples in
Section 4, we consider the solutions of the OU equation driven by
sub-fractional Brownian motion and by bi-fractional Brownian motion, both of
which have somewhat non-trivial covariance structures with non-stationary
increments, which are then inherited by the corresponding fOU-type processes
whose drift parameter $\theta $ is of interest. These fOU-type processes
have been well studied in the literature over the last decade, including
efforts to understand their statistical inference. We refer only to \cite%
{EEO} for information about these processes and additional references. We
compute the asymptotic variances of these fOU-type processes, which are
explicit functions $f\left( \theta \right) $, depending only on $\theta $
(and on the Hurst-type parameters which are assumed to be known), and are in
fact power functions of $\theta $. The normal asymptotics for the estimators
of each of these two $f\left( \theta \right) $'s can be converted to similar
results for $\theta $ itself; see \cite{EV} for instance for an analysis,
which we do not explore further here. Part of being able to apply the
strategy of Section 3 successfully in the examples of Section 4, relies on
three things, the first two which cannot be avoided, the third which enables
an efficient estimation of convergence speeds:

\begin{enumerate}
\item being able to identify $f\left( \theta \right) $, preferably in an
explicit way, thus giving access to $\theta $, but at least to show that the
data stream's variance converges;

\begin{enumerate}
\item we do this for both the sub-fractional and bi-fractional OU processes;

\item the expressions for $f\left( \theta \right) $ are simple (see the
formulas for $f_{H}\left( \theta \right) $ and $f_{H,K}\left( \theta \right)
$ for each process respectively, in (\ref{fH}) and (\ref{fHK})) but proving
convergence and evaluating the speed therein is slightly non-trivial;
\end{enumerate}

\item showing that the CLT-normalized estimator of $f\left( \theta \right) $
has an asymptotic variance $\sigma ^{2}$; this does not need to be found
explicitly, as long as one can prove that the variances converge;

\begin{enumerate}
\item we succeed in this task for both the sub-fractional and bi-fractional
OU processes, and are moreover able to compute $\sigma ^{2}$
semi-explicitly, using a series representation based on the covariance
function of discrete-time fractional Gaussian noise (fGn); see formulas (\ref%
{sigmaH}) and (\ref{sigmaHK}) respectively;

\item these expressions comes as something of a surprise, since \emph{a
priori} there was no reason to expect that the estimator's asymptotic
variance could be expressed using the fGn's covariance rather than the
non-stationary covariance structure of the increments of the sub- or
bi-fractional processes;
\end{enumerate}

\item computing speeds of convergence of both the estimator's bias and its
asymptotic variance, as explicitly as possible;

\begin{enumerate}
\item this is done for both the sub-fractional and bi-fractional OU
processes, in a rather explicit fashion, and requires a certain amount of
elbow grease, where most calculations are relegated to lemmas in the
Appendix;

\item we do not claim to have performed these estimations as efficiently as
can possibly be done; however, we think that the power orders of these
convergences are probably optimal because: (i) we have based the estimations
on the optimal fourth moment theorem of Nourdin and Peccati \cite{NP2015},
(ii) we have confirmed a phenomenon observed in a simpler (stationary)
context in \cite{NV2014} (third moment theorem) by which the speed of
convergence is given by that of an absolute third moment rather than by a
fourth cumulant, and (iii) our final result for the sub- and bi-fractional
processes identify a well-known threshold of a Hurst parameter not exceeding
$3/4$ as the upper endpoint of the range where quadratic variations are
asymptotically normal.
\end{enumerate}
\end{enumerate}

To finish this introduction, we note the general structure of this paper.
The next section presents preliminaries regarding the analysis of Wiener
space, and other convenient facts, which are needed in some technical parts
of the paper. The reader interested in our results more than our proofs can
safely skip this Section 2 upon a first reading. Section 3 presents the
general framework of how to estimate the asymptotic variance of a Gaussian
process observed in discrete time. Section 4 shows how this method can be
implemented in two cases of discretely observed OU processes driven by two
well-known non-stationary Gaussian processes with medium-to-long memory. The
Appendix collects calculations useful in Section 4.

\section{Preliminaries, and elements of analysis on Wiener space\label%
{Wiener}}

This section provides essential facts from basic probability and the
analysis on Wiener space and the Malliavin calculus. These facts and their
corresponding notation underlie all the results of this paper, but most of
our results and arguments can be understood independently of this section.
The interested reader can find more details in \cite[Chapter 1]{nualart-book}
and \cite[Chapter 2]{NP-book}.

\subsection{A convenient lemma\label{CLemma}}

The following result is a direct consequence of the Borel-Cantelli Lemma
(the proof is elementary; see e.g. \cite{KN}). It is convenient for
establishing almost-sure convergences from $L^{p}$ convergences.

\begin{lemma}
\label{Borel-Cantelli} Let $\gamma >0$. Let $(Z_{n})_{n\in \mathbb{N}}$ be a
sequence of random variables. If for every $p\geq 1$ there exists a constant
$c_{p}>0$ such that for all $n\in \mathbb{N}$,
\begin{equation*}
\Vert Z_{n}\Vert _{L^{p}(\Omega )}\leqslant c_{p}\cdot n^{-\gamma },
\end{equation*}%
then for all $\varepsilon >0$ there exists a random variable $\eta
_{\varepsilon }$ which is almost such that
\begin{equation*}
|Z_{n}|\leqslant \eta _{\varepsilon }\cdot n^{-\gamma +\varepsilon }\quad %
\mbox{almost surely}
\end{equation*}%
for all $n\in \mathbb{N}$. Moreover, $E|\eta _{\varepsilon }|^{p}<\infty $
for all $p\geq 1$.
\end{lemma}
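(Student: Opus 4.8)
The plan is to build the random variable $\eta_\varepsilon$ explicitly as a supremum, control its moments via the given $L^p$ bounds, and then extract the almost-sure inequality. First I would fix $\varepsilon>0$ and, for a large fixed integer $p$ to be chosen, apply the hypothesis with that $p$: for each $n$ one has $P(|Z_n| > n^{-\gamma+\varepsilon}) = P(|Z_n|^p > n^{-p\gamma+p\varepsilon}) \leq n^{p\gamma - p\varepsilon}\,E|Z_n|^p \leq c_p^p\, n^{-p\varepsilon}$ by Markov's inequality. Choosing $p$ so that $p\varepsilon > 1$ makes $\sum_n c_p^p n^{-p\varepsilon} < \infty$, so by Borel-Cantelli the event $\{|Z_n| > n^{-\gamma+\varepsilon}\}$ occurs for only finitely many $n$, almost surely. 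Hence the (countable) supremum
\begin{equation*}
\eta_\varepsilon := \sup_{n\in\mathbb{N}} \frac{|Z_n|}{n^{-\gamma+\varepsilon}}
\end{equation*}
is almost surely finite, and by construction $|Z_n| \leq \eta_\varepsilon\, n^{-\gamma+\varepsilon}$ for all $n$, almost surely.

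It remains to check that $E|\eta_\varepsilon|^p < \infty$ for every $p\geq 1$. Here I would bound the supremum by a sum: since all terms are nonnegative, $\eta_\varepsilon^p = \big(\sup_n |Z_n| n^{\gamma-\varepsilon}\big)^p \leq \sum_{n\in\mathbb{N}} |Z_n|^p\, n^{p(\gamma-\varepsilon)}$. Taking expectations and using the hypothesis with a \emph{larger} exponent $q = q(p)$ in place of $p$ — namely applying $\|Z_n\|_{L^q} \leq c_q n^{-\gamma}$ and Jensen/monotonicity of $L^r$ norms to get $E|Z_n|^p \leq (E|Z_n|^q)^{p/q} \leq c_q^p\, n^{-p\gamma}$ when $q\geq p$ — gives
\begin{equation*}
E|\eta_\varepsilon|^p \leq \sum_{n\in\mathbb{N}} c_q^p\, n^{-p\gamma}\, n^{p(\gamma-\varepsilon)} = c_q^p \sum_{n\in\mathbb{N}} n^{-p\varepsilon},
\end{equation*}
which is finite as soon as $p\varepsilon > 1$. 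For the remaining small values of $p$ (those with $p\varepsilon \leq 1$) finiteness follows from the already-established finiteness for large $p$ together with the inclusion $L^{p'} \subseteq L^p$ on the probability space. So a single choice of $\eta_\varepsilon$ works for all $p$ simultaneously.

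The only mildly delicate point — hardly an obstacle — is bookkeeping the dependence of the chosen summation exponent on $\varepsilon$ and $p$, and making sure the \emph{same} $\eta_\varepsilon$ serves every $p$; this is handled cleanly by the sum-dominates-sup trick above, which decouples the two. Everything else is a routine application of Markov's inequality and Borel-Cantelli, exactly as the excerpt anticipates when it calls the proof elementary and cites \cite{KN}.
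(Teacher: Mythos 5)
Your proof is correct and is essentially the standard argument the paper points to when it cites \cite{KN} in lieu of giving a proof: define $\eta_\varepsilon=\sup_n n^{\gamma-\varepsilon}|Z_n|$, dominate the $p$-th power of the supremum by the sum, and use the hypothesis with a large enough exponent so that $\sum_n n^{-p\varepsilon}$ converges, with monotonicity of $L^p$ norms covering small $p$. The only remark is that your opening Markov/Borel--Cantelli step is redundant, since the finiteness of $E|\eta_\varepsilon|^p$ already gives $\eta_\varepsilon<\infty$ almost surely.
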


A typical application is when a sequence in a finite sum of Wiener chaoses
converges in the mean square. By the equivalence of all $L^{p}$ norms in
Wiener chaos, which is a consequence of the hypercontractivity of the
Ornstein-Uhlenbeck semigroup on Wiener space (see \cite[Section 2.8]{NP-book}%
, and Section \ref{Elements} below), the lemma's assumption is then
automatically satisfied if the speed of convergence in the mean square is of
the same power form.

The random variable $\eta _{\varepsilon }$ can typically be chosen more
explicitly than what the lemma guarantees. For instance, it is well known
that if $Z$ is Gaussian, then $\eta _{\varepsilon }$ can be chosen as a
\emph{gauge} function of $Z$, and is therefore sub-Gaussian (tails no larger
than Gaussian; see \cite{F1975}). A similar property holds for processes in
a finite sum of Wiener chaoses, as can be inferred from \cite{VV2007}. We
will not pursue these refinement, since the above lemma will be sufficient
for our purposes herein.

\subsection{The Young integral}

Fix $T>0$. Let $f,g:[0,T]\longrightarrow \mathbb{R}$ be Hölder continuous
functions of orders $\alpha \in (0,1)$ and $\beta \in (0,1)$ respectively
with $\alpha +\beta >1$. Young \cite{young} proved that the
Riemann-Stieltjes integrals (so-called Young integrals) $%
\int_{0}^{.}f_{u}dg_{u}$ and $\int_{0}^{.}g_{u}df_{u}$ exist as limits of
the usual discretization. Morever, for all $t\in \lbrack 0,T]$, integration
by parts (product rule) holds:
\begin{equation}
f_{t}g_{t}=f_{0}g_{0}+\int_{0}^{t}g_{u}df_{u}+\int_{0}^{t}f_{u}dg_{u}.
\label{Young}
\end{equation}

This integral is convenient to define stochastic integrals in a pathwise
sense with respect to processes $f$ which are smoother than Brownian motion,
in the sense that they are almost surely $\alpha $-Hölder continuous for
some $\alpha >1/2$. A typical example of such a process is the fractional
Brownian motion with Hurst parameter $H>1/2$. In that case, any $\alpha $
can be chosen in $\left( 1/2,H\right) $, and $g$ can be any process with the
same regularity as the fBm, and thus with $\beta =\alpha $; this enables a
stochastic calculus immediately, in which no \textquotedblleft Itô%
\textquotedblright -type correction terms occur, owing to (\ref{Young}). For
instance, for any Lipshitz non-random function $g$, this
integration-by-parts formula holds for any a.s. Hölder-continuous Gaussian
process $f$, since then $\alpha >0$ and $\beta =1$. In particular, the first
integral in (\ref{Young}) coincides with the Wiener integral, the second is
an ordinary Riemann-Stieltjes integral, and no Itô correction occurs since
one of the processes is of finite variation.

\subsection{Elements of Analysis on Wiener space\label{Elements}}

With $\left( \Omega ,\mathcal{F},\mathbf{P}\right) $ denoting the Wiener
space of a standard Wiener process $W$, for a deterministic function $h\in
L^{2}\left( \mathbf{R}_{+}\right) =:{{\mathcal{H}}}$, the Wiener integral $%
\int_{\mathbf{R}_{+}}h\left( s\right) dW\left( s\right) $ is also denoted by
$W\left( h\right) $. The inner product $\int_{\mathbf{R}_{+}}f\left(
s\right) g\left( s\right) ds$ will be denoted by $\left\langle
f,g\right\rangle _{{\mathcal{H}}}$.

\begin{itemize}
\item \textbf{The Wiener chaos expansion}. For every $q\geq 1$, ${\mathcal{H}%
}_{q}$ denotes the $q$th Wiener chaos of $W$, defined as the closed linear
subspace of $L^{2}(\Omega )$ generated by the random variables $%
\{H_{q}(W(h)),h\in {{\mathcal{H}}},\Vert h\Vert _{{\mathcal{H}}}=1\}$ where $%
H_{q}$ is the $q$th Hermite polynomial. All Wiener chaos random variable are
orthogonal in $L^{2}\left( \Omega \right) $. The so-called Wiener chaos
expansion is the fact that any $X\in L^{2}\left( \Omega \right) $ can be
written as
\begin{equation}
X=\mathbf{E}X+\sum_{q=0}^{\infty }X_{q}  \label{WienerChaos}
\end{equation}%
for some $X_{q}\in {\mathcal{H}}_{q}$ for every $q\geq 1$. This is
summarized in the direct-orthogonal-sum notation $L^{2}\left( \Omega \right)
=\oplus _{q=0}^{\infty }{\mathcal{H}}_{q}$. Here ${\mathcal{H}}_{0}$ denotes
the constants.

\item \textbf{Relation with Hermite polynomials. Multiple Wiener integrals}.
The mapping ${I_{q}(h^{\otimes q}):}=q!H_{q}(W(h))$ is a linear isometry
between the symmetric tensor product ${\mathcal{H}}^{\odot q}$ (equipped
with the modified norm $\Vert .\Vert _{{\mathcal{H}}^{\odot q}}=\frac{1}{%
\sqrt{q!}}\Vert .\Vert _{{\mathcal{H}}^{\otimes q}}$) and ${\mathcal{H}}_{q}$%
. To relate this to standard stochastic calculus, one first notes that ${%
I_{q}(h^{\otimes q})}$ can be interpreted as the multiple Wiener integral of
${h^{\otimes q}}$ w.r.t. $W$. By this we mean that the Riemann-Stieltjes
approximation of such an integral converges in $L^{2}\left( \Omega \right) $
to ${I_{q}(h^{\otimes q})}$. This is an elementary fact from analysis on
Wiener space, which can also be proved using standard stochastic calculus
for square-integrable martingales, because the multiple integral
interpretation of ${I_{q}(h^{\otimes q})}$ as a Riemann-Stieltjes integral
over $\left( \mathbf{R}_{+}\right) ^{q}$ can be further shown to coincinde
with $q!$ times the iterated Itô integral over the first simplex in $\left(
\mathbf{R}_{+}\right) ^{q}$.

More generally, for $X$ and its Wiener chaos expansion (\ref{WienerChaos})
above, each term $X_{q}$ can be interpreted as a multiple Wiener intergral $%
I_{q}\left( f_{q}\right) $ for some $f_{q}\in {\mathcal{H}}^{\odot q}$.

\item \textbf{The product formula - Isometry property}. For every $f,g\in {{%
\mathcal{H}}}^{\odot q}$ the following extended isometry property holds
\begin{equation*}
E\left( I_{q}(f)I_{q}(g)\right) =q!\langle f,g\rangle _{{\mathcal{H}}%
^{\otimes q}}.
\end{equation*}%
Similarly as for ${I_{q}(h^{\otimes q})}$, this formula is established using
basic analysis on Wiener space, but it can also be proved using standard
stochastic calculus, owing to the coincidence of $I_{q}(f)$ and $I_{q}(g)$
with iterated Itô integrals. To do so, one uses Itô's version of integration
by parts, in which iterated calculations show coincidence of the expectation
of the bounded variation term with the right-hand side above. What is
typically referred to as the Product Formula on Wiener space is the version
of the above formula before taking expectations (see \cite[Section 2.7.3]%
{NP-book}). In our work, beyond the zero-order term in that formula, which
coincides with the expectation above, we will only need to know the full
formula for $q=1$, which is $I_{1}(f)I_{1}(g)=2^{-1}I_{2}\left( f\otimes
g+g\otimes f\right) +\langle f,g\rangle _{{\mathcal{H}}}$.

\item \textbf{Hypercontractivity in Wiener chaos}. For $h\in {\mathcal{H}}%
^{\otimes q}$, the multiple Wiener integrals $I_{q}(h)$, which exhaust the
set ${\mathcal{H}}_{q}$, satisfy a hypercontractivity property (equivalence
in ${\mathcal{H}}_{q}$ of all $L^{p}$ norms for all $p\geq 2$), which
implies that for any $F\in \oplus _{l=1}^{q}{\mathcal{H}}_{l}$ (i.e. in a
fixed sum of Wiener chaoses), we have
\begin{equation}
\left( E\big[|F|^{p}\big]\right) ^{1/p}\leqslant c_{p,q}\left( E\big[|F|^{2}%
\big]\right) ^{1/2}\ \mbox{ for any }p\geq 2.  \label{hypercontractivity}
\end{equation}%
It should be noted that the constants $c_{p,q}$ above are known with some
precision when $F$ is a single chaos term: indeed, by Corollary 2.8.14 in
\cite{NP-book}, $c_{p,q}=\left( p-1\right) ^{q/2}$.

\item \textbf{Malliavin derivative}. The Malliavin derivative operator $D$
on Wiener space is not needed explicitly in the paper. However, because of
the fundamental role $D$ plays in evaluating distances between random
variables, it is helpful to introduce it, to justify the estimates (\ref%
{NPobschaos}) and (\ref{optimal berry esseen}) below. For any function $\Phi
\in C^{1}\left( \mathbf{R}\right) $ with bounded derivative, and any $h\in {%
\mathcal{H}}$, the Malliavin derivative of the random variable $X:=\Phi
\left( W\left( h\right) \right) $ is defined to be consistent with the
following chain rule:%
\begin{equation*}
DX:X\mapsto D_{r}X:=\Phi ^{\prime }\left( W\left( h\right) \right) h\left(
r\right) \in L^{2}\left( \Omega \times \mathbf{R}_{+}\right) .
\end{equation*}%
A similar chain rule holds for multivariate $\Phi $. One then extends $D$ to
the so-called Gross-Sobolev subset $\mathbf{D}^{1,2}\varsubsetneqq
L^{2}\left( \Omega \right) $ by closing $D$ inside $L^{2}\left( \Omega
\right) $ under the norm defined by
\begin{equation*}
\left\Vert X\right\Vert _{1,2}^{2}=\mathbf{E}\left[ X^{2}\right] +\mathbf{E}%
\left[ \int_{\mathbf{R}_{+}}\left\vert D_{r}X\right\vert ^{2}dr\right] .
\end{equation*}%
All Wiener chaos random variable are in the domain $\mathbf{D}^{1,2}$ of $D$%
. In fact this domain can be expressed explicitly for any $X$ as in (\ref%
{WienerChaos}): $X\in \mathbf{D}^{1,2}$ if and only if $\sum_{q}q\Vert
f_{q}\Vert _{{\mathcal{H}}^{\otimes q}}^{2}<\infty $.

\item \textbf{Generator }$L$ \textbf{of the Ornstein-Uhlenbeck semigroup}.
The linear operator $L$ is defined as being diagonal under the Wiener chaos
expansion of $L^{2}\left( \Omega \right) $: ${\mathcal{H}}_{q}$ is the
eigenspace of $L$ with eigenvalue $-q$, i.e. for any $X\in {\mathcal{H}}_{q}$%
, $LX=-qX$. We have $Ker$($L)=$ ${\mathcal{H}}_{0}$, the constants. The
operator $-L^{-1}$ is the negative pseudo-inverse of $L$, so that for any $%
X\in {\mathcal{H}}_{q}$, $-L^{-1}X=q^{-1}X$. Since the variables we will be
dealing with in this article are finite sums of elements of ${\mathcal{H}}%
_{q}$, the operator $-L^{-1}$ is easy to manipulate thereon. The use of $L$
is crucial when evaluating the total variation distance between the laws of
random variables in Wiener chaos, as we will see shortly.

\item \textbf{Distances between random variables}. Recall that, if $X,Y$ are
two real-valued random variables, then the total variation distance between
the law of $X$ and the law of $Y$ is given by
\begin{equation*}
d_{TV}\left( X,Y\right) =\sup_{A\in \mathcal{B}({\mathbb{R}})}\left\vert P%
\left[ X\in A\right] -P\left[ Y\in A\right] \right\vert
\end{equation*}%
where the supremum is over all Borel sets. If $X,Y$ are two real-valued
integrable random variables, then the Wasserstein distance between the law
of $X$ and the law of $Y$ is given by
\begin{equation*}
d_{W}\left( X,Y\right) =\sup_{f\in Lip(1)}\left\vert Ef(X)-Ef(Y)\right\vert
\end{equation*}%
where $Lip(1)$ indicates the collection of all Lipschitz functions with
Lipschitz constant $\leqslant 1$. Let $N$ denote the standard normal law.

\item \textbf{Malliavin operators and distances between laws on Wiener space}%
. There are two key estimates linking total variation distance and the
Malliavin calculus, which were both obtained by Nourdin and Peccati. The
first one is an observation relating an integration-by-parts formula on
Wiener space with a classical result of Ch. Stein. The second is a
quantitatively sharp version of the famous 4th moment theorem of Nualart and
Peccati.

\begin{itemize}
\item \textbf{The observation of Nourdin and Peccati}. Let $X\in \mathbf{D}%
^{1,2}$ with $\mathbf{E}\left[ X\right] =0$ and $Var\left[ X\right] =1$.
Then (see \cite[Proposition 2.4]{NP2015}, or \cite[Theorem 5.1.3]{NP-book}),
for $f\in C_{b}^{1}\left( \mathbf{R}\right) $,%
\begin{equation*}
E\left[ Xf\left( X\right) \right] =E\left[ f^{\prime }\left( X\right)
\left\langle DX,-DL^{-1}X\right\rangle _{\mathcal{H}}\right]
\end{equation*}%
and by combining this with properties of solutions of Stein's equations, one
gets%
\begin{equation}
d_{TV}\left( X,N\right) \leqslant 2E\left\vert 1-\left\langle
DX,-DL^{-1}X\right\rangle _{\mathcal{H}}\right\vert .  \label{NPobs}
\end{equation}%
One notes in particular that when $X\in {\mathcal{H}}_{q}$, since $%
-L^{-1}X=q^{-1}X$, we obtain conveniently%
\begin{equation}
d_{TV}\left( X,N\right) \leqslant 2E\left\vert 1-q^{-1}\left\Vert
DX\right\Vert _{\mathcal{H}}^{2}\right\vert .  \label{NPobschaos}
\end{equation}%
It is this last observation which leads to a quantitative version of the
fourth moment theorem of Nualart and Peccati, which entails using Jensen's
inequality to note that the right-hand side of (\ref{NPobs}) is bounded
above by the variance of $\left\langle DX,-DL^{-1}X\right\rangle _{\mathcal{H%
}}$, and then relating that variance in the case of Wiener chaos with the
4th cumulant of $X$. However, this strategy was superceded by the following,
which has roots in \cite{BBNP}.

\item \textbf{The optimal fourth moment theorem}. For each integer $n$, let $%
X_{n}\in {\mathcal{H}}_{q}$. Assume $Var\left[ X_{n}\right] =1$ and $\left(
X_{n}\right) _{n}$ converges in distribution to a normal law. It is known
(original proof in \cite{NP2005}, known as the \emph{fourth moment theorem})
that this convergence is equivalent to $\lim_{n}\mathbf{E}\left[ X_{n}^{4}%
\right] =3$. The following optimal estimate for $d_{TV}\left( X,N\right) $,
known as the optimal fourth moment theorem, was proved in \cite{NP2015}:
with the sequence $X$ as above, assuming convergence, there exist two
constant $c,C>0$ depending only on $X$ but not on $n$, such that
\begin{equation}
c\max \left\{ \mathbf{E}\left[ X_{n}^{4}\right] -3,\left\vert \mathbf{E}%
\left[ X_{n}^{3}\right] \right\vert \right\} \leqslant d_{TV}\left(
X_{n},N\right) \leqslant C\max \left\{ \mathbf{E}\left[ X_{n}^{4}\right]
-3,\left\vert \mathbf{E}\left[ X_{n}^{3}\right] \right\vert \right\} .
\label{optimal berry esseen}
\end{equation}
\end{itemize}
\end{itemize}

Given its importance, the centered fourth moment, also known as the fourth
cumulant, of a standardized random variable, warrants the following special
notation:%
\begin{equation*}
\kappa _{4}\left( X\right) :=\mathbf{E}\left[ X^{4}\right] -3.
\end{equation*}%
Let us also recall that if $E[F_{n}]=0$, we denote $\kappa
_{3}(F_{n})=E[F_{n}^{3}]$ its third cumulant. Throughout the paper we use
the notation $N\sim \mathcal{N}(0,1)$. We also use the notation $C$ for any
positive real constant, independently of its value which may change from
line to line when this does not lead to ambiguity.

\section{General Context}

Let $G=\{G_{t},t\geq 0\}$ be an underlying Gaussian process, and let $%
X=\{X_{n}\}_{n\geq 0}$ be a mean-zero Gaussian sequence measurable with
respect to $G$. This means that $X$ is a sequence of random variables which
can be represented as $X_{t}=I_{1}(h_{t})$ for every $t\geq 0$, i.e. $X_{t}$
is a Wiener integral with respect to $G$, where $h_{t}\in {\Hf}$ with ${\Hf}$
the Hilbert space associated to $G$. In particular, we do not assume that $X$
is a stationary process. Define
\begin{equation*}
A_{n}(X):=\frac{1}{n}\sum_{i=1}^{n}E[X_{i}^{2}]\hspace{0.7cm}\text{and}%
\hspace{0.6cm}V_{n}(X):=\frac{1}{\sqrt{n}}%
\sum_{i=1}^{n}(X_{i}^{2}-E[X_{i}^{2}]).
\end{equation*}%
As we explained in the introduction, the goal is to estimate the asymptotic
variance $f$ of the sequence $X$, if it exists. Assuming it exists is the
first of the following four assumptions, i.e. $(\mathcal{H}1)$. The other
three give us various levels of additional regularity for the law of $X$ to
measure the speed of convergence of a quadratic-variations-based estimator
for $f$:

\begin{itemize}
\item[$(\mathcal{H}1)$] $E[X_{n}^{2}]\longrightarrow f$ as $n\longrightarrow
\infty $.

\item[$(\mathcal{H}2)$] $v_{n}:=E[V_{n}(X)^{2}]\longrightarrow \sigma ^{2}>0$%
, \text{as} $n\longrightarrow \infty $.

\item[$(\mathcal{H}3)$] $|E[X_{t}X_{s}]| \leqslant C\rho\left(|t-s|\right)$
where $\rho:\mathbb{R}\rightarrow\mathbb{R}$ is a symmetric positive
function such that $\rho(0)=1$, and $C$ is a positive constant.

\item[$(\mathcal{H}4)$] $\sqrt{n}|A_{n}(X)-f|\longrightarrow 0$, \text{as} $%
n\longrightarrow \infty $.
\end{itemize}

We consider the following estimator of $f$
\begin{equation}
\hat{f}_{n}(X)=\frac{1}{n}\sum_{i=1}^{n}X_{i}^{2}.
\label{estimator quadratic}
\end{equation}%
We refer to \cite{EEV} and \cite{EV} for the construction of this estimator (%
\ref{estimator quadratic}) for some classes of processes and sequences, i.e.
how it can be interpreted as a discretization of a least squares estimator
of $f$. The next two subsections study strong consistency and speed of
convergence in the asymptotic normality of $\hat{f}_{n}\left( X\right) $. We
make some comments on the roles and motivations behind the four assumptions
above.

\begin{itemize}
\item As we said, Assumption $(\mathcal{H}1)$ states that the asymptotic
variance $f$ of the sequence $X$ exists.

\item Assumption $(\mathcal{H}2)$ helps establish strong consistency of $%
\hat{f}_{n}\left( X\right) $ via the Borel-Cantelli lemma and
hypercontractivity of Wiener chaos. It states that a proper standartization
of $\hat{f}_{n}\left( X\right) $ has an asymptotic variance; it is also used
to help establish a quantitative upper bound on the total variation distance
between a standardized version of $\hat{f}_{n}\left( X\right) $ and the
standard normal law.

\item Assumption $(\mathcal{H}3)$ formalizes the idea that while the
sequence $X$ is not stationary, it may have a covariance structure which is
bounded above by one which may be comparable to a stationary one. This
assumption itself is largely used as a matter of convenience, since formally
by Schwartz's inequality it can always be assumed to hold for the trivial
case $\rho \equiv 1$. However, by making further quantitative estimates on
the rate of decay of $\rho $ to $0$, we find convenient explicit upper bound
expressions for the total variation distance of between a standardization of
$\hat{f}_{n}(X)$ and the normal law. The corresponding results are in
Theorem \ref{thm cv in law of Fn}.

\item In particular, Assumption $(\mathcal{H}4)$ quantifies the speed of
convergence of the mean of $\hat{f}_{n}(X)$ towards $f$; combined with
Assumption $(\mathcal{H}2)$ which determines the speed of convergence of the
variance of $\hat{f}_{n}(X)$, and using estimates on Wiener space, one can
arrive at a fully quantified upper bound on the Wasserstein distance between
$\sqrt{n}\left( \hat{f}_{n}(X)-f\right) $ and the law $\mathcal{N}\left(
0,\sigma ^{2}\right) $. The corresponding results are in Theorem \ref{berry
ess. theta gen. case}.

\item The last point above is significant because Theorem \ref{berry ess.
theta gen. case} does not rely on using $v_{n}$, which is not observed, to
standardize $\hat{f}_{n}(X)$. This theorem also decomposes the distance to
the limiting normal law into the sum of three explicit terms: one to account
for the bias of $\hat{f}_{n}(X)$ (from Assumption $(\mathcal{H}4)$), one to
account for the speed of convergence to the asymptotic variance (from
Assumption $(\mathcal{H}2)$), and one from the analysis on Wiener space
which uses the speed of decay of the correlations of $X$ (from Assumption $(%
\mathcal{H}3)$).
\end{itemize}

\subsection{Strong consistency}

The following theorem provides sufficient assumptions to obtain the
estimator $\hat{f}_{n}(X)$'s strong consistency, i.e. almost sure
convergence to $f$.

\begin{theorem}
\label{Thm consistency} Assume that $(\mathcal{H}1)$ and $(\mathcal{H}2)$
hold. Then
\begin{equation}
\hat{f}_{n}(X)\longrightarrow f  \label{consistency
general case}
\end{equation}%
almost surely as $n\longrightarrow \infty $.
\end{theorem}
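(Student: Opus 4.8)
The plan is to write $\hat{f}_{n}(X)=A_{n}(X)+\frac{1}{\sqrt{n}}V_{n}(X)$, which is immediate from the definitions of $A_{n}$ and $V_{n}$, and then to treat the two terms separately: the first converges to $f$ deterministically, the second to $0$ almost surely. For the first term, Assumption $(\mathcal{H}1)$ says $E[X_{n}^{2}]\to f$, hence its Cesàro averages $A_{n}(X)$ converge to $f$ as well; no probability is involved here. So the whole problem reduces to showing $n^{-1/2}V_{n}(X)\to 0$ a.s.

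The key observation is that $V_{n}(X)$ lies in the \emph{second} Wiener chaos $\mathcal{H}_{2}$. Writing $X_{i}=I_{1}(h_{i})$ with $h_{i}\in \Hf$ and applying the product formula recalled in Section \ref{Elements} (the case $q=1$), one gets $X_{i}^{2}=I_{2}(h_{i}\otimes h_{i})+\Vert h_{i}\Vert _{\Hf}^{2}$ and $E[X_{i}^{2}]=\Vert h_{i}\Vert _{\Hf}^{2}$, so that $X_{i}^{2}-E[X_{i}^{2}]=I_{2}(h_{i}\otimes h_{i})$ and therefore $V_{n}(X)=n^{-1/2}\sum_{i=1}^{n}I_{2}(h_{i}\otimes h_{i})\in \mathcal{H}_{2}$. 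By Assumption $(\mathcal{H}2)$ the quantity $v_{n}=E[V_{n}(X)^{2}]$ converges, hence is bounded, say $v_{n}\leqslant M$ for all $n$. Since $V_{n}(X)$ belongs to a fixed Wiener chaos, the hypercontractivity estimate (\ref{hypercontractivity}) applies and yields, for every $p\geqslant 2$, $\Vert V_{n}(X)\Vert _{L^{p}(\Omega )}\leqslant c_{p,2}\sqrt{v_{n}}\leqslant c_{p,2}\sqrt{M}$; for $1\leqslant p<2$ the same bound holds a fortiori by monotonicity of $L^{p}$-norms on a probability space. Consequently, for every $p\geqslant 1$,
\[
\big\Vert n^{-1/2}V_{n}(X)\big\Vert _{L^{p}(\Omega )}\leqslant c_{p}\cdot n^{-1/2},\qquad c_{p}:=c_{p,2}\sqrt{M}.
\]

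At this stage I would invoke the Borel-Cantelli-type Lemma \ref{Borel-Cantelli} with $Z_{n}=n^{-1/2}V_{n}(X)$ and $\gamma =1/2$: it provides, for each $\varepsilon \in (0,1/2)$, an almost surely finite random variable $\eta _{\varepsilon }$ such that $|n^{-1/2}V_{n}(X)|\leqslant \eta _{\varepsilon }\,n^{-1/2+\varepsilon }$ a.s. for all $n$, which forces $n^{-1/2}V_{n}(X)\to 0$ almost surely. Combining this with $A_{n}(X)\to f$ gives $\hat{f}_{n}(X)=A_{n}(X)+n^{-1/2}V_{n}(X)\to f$ a.s., which is (\ref{consistency general case}). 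There is no real obstacle in this argument; the one point that must be handled with care is the identification of $V_{n}(X)$ as an element of a \emph{single} Wiener chaos, since that is exactly what upgrades the $L^{2}$ control furnished by $(\mathcal{H}2)$ into the $L^{p}$ control required to feed Lemma \ref{Borel-Cantelli}.
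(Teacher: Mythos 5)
Your proposal is correct and follows exactly the paper's own argument: the decomposition $\hat{f}_{n}(X)=A_{n}(X)+n^{-1/2}V_{n}(X)$, Ces\`aro convergence from $(\mathcal{H}1)$, the $L^{2}$ bound from $(\mathcal{H}2)$, hypercontractivity to upgrade to $L^{p}$, and Lemma \ref{Borel-Cantelli}. You merely spell out the details (the second-chaos representation of $V_{n}(X)$ via the product formula) that the paper leaves implicit.
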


\begin{proof}
It is clear that
\begin{equation*}
\hat{f}_{n}(X)=\frac{V_{n}(X)}{\sqrt{n}}+A_{n}(X).
\end{equation*}%
The hypothesis $(\mathcal{H}1)$ and the convergence of Cesàro means imply
that, as $n\longrightarrow \infty $
\begin{equation*}
A_{n}(X)\longrightarrow f.
\end{equation*}%
In order to prove (\ref{consistency general case}) it remains to check that,
as $n\longrightarrow \infty $, $\frac{V_{n}(X)}{\sqrt{n}}\longrightarrow 0$
almost surely. According to $(\mathcal{H}2)$, there exists $C>0$ such that
for all $n>1$
\begin{equation*}
\left( E\left[ \left\vert \frac{V_{n}(X)}{\sqrt{n}}\right\vert ^{2}\right]
\right) ^{1/2}\leqslant \frac{C}{\sqrt{n}}.
\end{equation*}%
Now, using the hypercontractivity property (\ref{hypercontractivity}) and
Lemma \ref{Borel-Cantelli} the result is obtained.
\end{proof}

\subsection{Asymptotic normality}

In this section we study the asymptotic normality of $\hat{f}_{n}(X)$. By
the product formula, we can write
\begin{equation*}
V_{n}(X)=\frac{1}{\sqrt{n}}\sum_{k=1}^{n}(X_{k}^{2}-E[X_{k}^{2}])=\frac{1}{%
\sqrt{n}}\sum_{k=1}^{n}I_{2}(f_{k}^{\otimes 2}).
\end{equation*}%
Set
\begin{equation*}
v_{n}(X):=E[V_{n}(X)^{2}]=\frac{2}{n}\sum_{j,k=1}^{n}\left(
E[X_{j}X_{k}]\right) ^{2},
\end{equation*}%
where the second equality is from elementary covariance calculation, and
\begin{equation*}
F_{n}(X):=\frac{V_{n}(X)}{\sqrt{v_{n}(X)}}=I_{2}(g_{n})
\end{equation*}%
where
\begin{equation*}
g_{n}:=\left( nv_{n}(X)\right) ^{-1/2}\sum_{k=1}^{n}f_{k}^{\otimes 2}.
\end{equation*}

Let us estimate the third cumulant of $F_{n}(X)$. We have for every $n\geq 1$%
, by the observation of Nourdin and Peccati (or using calculus on Wiener
chaos), we have
\begin{equation*}
\kappa _{3}(F_{n}(X))=2E[F_{n}(X)\Gamma _{1}(F_{n}(X))],
\end{equation*}%
where $\Gamma _{1}(F_{n}(X))=2I_{2}(g_{n}\widetilde{\otimes }%
_{1}g_{n})+2\Vert g_{n}\Vert _{\Hf^{\otimes 2}}^{2}$. Assume that $(\mathcal{%
H}3)$ holds. We then have
\begin{align}
\kappa _{3}(F_{n}(X))& =8\langle g_{n},g_{n}\widetilde{\otimes }%
_{1}g_{n}\rangle _{\Hf^{\otimes 2}}^{2}  \notag \\
& =8\langle g_{n},g_{n}\otimes _{1}g_{n}\rangle _{\Hf^{\otimes 2}}^{2}
\notag \\
& =\frac{8}{(nv_{n}(X))^{3/2}}\sum_{i,j,k=1}^{n}\langle f_{i},f_{k}\rangle _{%
\Hf}\langle f_{i},f_{j}\rangle _{\Hf}\langle f_{k},f_{j}\rangle _{\Hf}
\notag \\
& =\frac{8}{(nv_{n}(X))^{3/2}}%
\sum_{i,j,k=1}^{n}E[X_{i}X_{k}]E[X_{i}X_{j}]E[X_{j}X_{k}]  \notag
\end{align}%
Therefore,
\begin{align}
\left\vert \kappa _{3}(F_{n}(X))\right\vert & \unlhd \frac{1}{%
(nv_{n}(X))^{3/2}}\sum_{i,j,k=1}^{n}\left\vert \rho (i-k)\rho (i-j)\rho
(j-k)\right\vert  \notag \\
& \unlhd \frac{1}{v_{n}(X)^{3/2}\sqrt{n}}\left( \sum_{|k|<n}|\rho
(k)|^{3/2}\right) ^{2}.  \label{third cum}
\end{align}%
The last equality follows from the same argument as for the proof of \cite[%
Proposition 6.3]{BBNP}. The symbol $\unlhd $ means we omit multiplicative
universal constants.

Now for the fourth cumulant, similarly, we have
\begin{align}
\kappa _{4}(F_{n}(X))& =\frac{1}{v_{n}(X)^{2}n^{2}}%
\sum_{k_{1},k_{2},k_{3},k_{4}=1}^{n}E[X_{k_{1}}X_{k_{2}}]E[X_{k_{2}}X_{k_{3}}]E[X_{k_{3}}X_{k_{4}}]E[X_{k_{4}}X_{k_{1}}]
\notag \\
& \unlhd \frac{1}{v_{n}(X)^{2}n^{2}}\sum_{k_{1},k_{2},k_{3},k_{4}=1}^{n}%
\left\vert \rho (k-l)\rho (i-j)\rho (k-i)\rho (l-j)\right\vert  \notag \\
& \unlhd \frac{1}{v_{n}(X)^{2}n}\left( \sum_{|k|<n}|\rho (k)|^{\frac{4}{3}%
}\right) ^{3}  \label{fourth cum}
\end{align}%
where again the last inequality comes from a similar argument as for the
proof of \cite[Proposition 6.4]{BBNP}.

\begin{remark}
In \cite[Proposition 6.3, Proposition 6.4]{BBNP} the sequence $%
\{X_{n}\}_{n\geq 0}$ is a centered stationary Gaussian sequence. In our
case, it is not necessarily stationary. Our Hypothesis $(\mathcal{H}3)$ is
sufficient to avoid the assumption of stationarity.

Also note that there is no need to take the absolute value of $\kappa _{4}$
because the fourth cumulant of a variable in a fixed chaos ($F_{n}$ is in
the 2nd chaos) is known to have a positive 4th cumulant.
\end{remark}

\begin{theorem}
\label{thm cv in law of Fn} Assume that the hypothesis $(\mathcal{H}3)$
holds. Then for all $n>1$,
\begin{equation}
d_{TV}(F_{n}(X),N)\leqslant C\max \left\{ \frac{1}{v_{n}(X)^{3/2}\sqrt{n}}%
\left( \sum_{|k|<n}|\rho (k)|^{3/2}\right) ^{2},\frac{1}{v_{n}(X)^{2}n}%
\left( \sum_{|k|<n}|\rho (k)|^{\frac{4}{3}}\right) ^{3}\right\} .
\label{berry esseen
general}
\end{equation}%
In particular, if for some $\beta \geq 1/2$, we have $\rho \left( t\right)
=O\left( |t|^{-\beta }\right) $ for large $|t|$, then there exists $C>0$
such that, for all $n>1$,
\begin{equation}
d_{TV}(F_{n}(X),N)\leqslant \frac{C}{v_{n}^{2}\wedge v_{n}^{3/2}}\left\{
\begin{array}{ll}
1 & \mbox{ if }\beta =\frac{1}{2} \\
n^{\frac{3}{2}-3\beta } & \mbox{ if }\beta \in \left( \frac{1}{2},\frac{2}{3}%
\right) \\
~~ &  \\
n^{-\frac{1}{2}}\log (n)^{2} & \mbox{ if }\beta =\frac{2}{3} \\
~~ &  \\
n^{-\frac{1}{2}} & \mbox{ if }\beta >\frac{2}{3}.%
\end{array}%
\right.  \label{berry esseen
particular}
\end{equation}
\end{theorem}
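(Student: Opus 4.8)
The plan is to feed the two cumulant bounds \eqref{third cum} and \eqref{fourth cum} into the optimal fourth moment theorem \eqref{optimal berry esseen}. Since $F_n(X) = I_2(g_n)$ lies in the second Wiener chaos with $\var[F_n(X)] = 1$, the upper bound in \eqref{optimal berry esseen} gives
\[
d_{TV}(F_n(X),N) \leqslant C \max\left\{ \kappa_4(F_n(X)), |\kappa_3(F_n(X))| \right\}.
\]
Substituting \eqref{third cum} for $|\kappa_3(F_n(X))|$ and \eqref{fourth cum} for $\kappa_4(F_n(X))$ yields \eqref{berry esseen general} directly; here one should note that the constants $c,C$ in \eqref{optimal berry esseen} nominally depend on the sequence, so a remark is needed that for a fixed chaos order (here $q=2$) the constant can be taken universal, which is the content of the quantitative form of the optimal theorem. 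I would state this carefully since it is the one genuinely delicate point in the reduction.

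For the second assertion \eqref{berry esseen particular}, the work is to estimate the two sums $\sum_{|k|<n} |\rho(k)|^{3/2}$ and $\sum_{|k|<n} |\rho(k)|^{4/3}$ under the hypothesis $\rho(t) = O(|t|^{-\beta})$. The key observation is that $|\rho(k)|^{3/2} = O(|k|^{-3\beta/2})$ and $|\rho(k)|^{4/3} = O(|k|^{-4\beta/3})$, so one compares the partial sums of $\sum |k|^{-3\beta/2}$ and $\sum |k|^{-4\beta/3}$ against the thresholds where these exponents cross $1$. For $\sum|k|^{-p}$ over $|k|<n$ one has the elementary three-regime estimate: it is $O(1)$ if $p>1$, $O(\log n)$ if $p=1$, and $O(n^{1-p})$ if $p<1$. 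Applying this with $p=3\beta/2$ (exponent $1$ at $\beta = 2/3$) and $p=4\beta/3$ (exponent $1$ at $\beta = 3/4$), one then plugs the resulting powers of $n$ into $\frac{1}{v_n^{3/2}\sqrt n}\left(\sum|\rho(k)|^{3/2}\right)^2$ and $\frac{1}{v_n^2 n}\left(\sum|\rho(k)|^{4/3}\right)^3$ and takes the maximum. The case division in \eqref{berry esseen particular} then emerges by bookkeeping: at $\beta = 1/2$ the third-cumulant term contributes $\frac{1}{v_n^{3/2}\sqrt n}(n^{1-3/4})^2 = v_n^{-3/2}$ which dominates; for $\beta \in (1/2,2/3)$ the third-cumulant term is $\frac{1}{v_n^{3/2}\sqrt n}(n^{1-3\beta/2})^2 = v_n^{-3/2} n^{3/2 - 3\beta}$ and one checks it still dominates the fourth-cumulant term; at $\beta = 2/3$ the log appears from the boundary case of the third-cumulant sum, giving $v_n^{-3/2} n^{-1/2}(\log n)^2$; and for $\beta > 2/3$ both sums are either bounded or the fourth-cumulant sum hits its own boundary at $\beta = 3/4$, in all sub-cases leaving a clean $n^{-1/2}$ after comparison, with the combined constant absorbed as $v_n^2 \wedge v_n^{3/2}$ in the denominator.

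The main obstacle I anticipate is not any single estimate but the careful comparison of the two competing terms across the four $\beta$-regimes, in particular verifying that the third-cumulant (absolute third moment) term is the dominant one throughout the stated range rather than the fourth cumulant — this is the phenomenon alluded to in the introduction, following \cite{NV2014}, and it is exactly what makes the optimal fourth moment theorem \eqref{optimal berry esseen} give a better rate than the classical fourth-moment bound would. I would also take care near $\beta = 3/4$: although this threshold does not appear explicitly in \eqref{berry esseen particular}, it is where the fourth-cumulant sum transitions, and one must confirm that even so the third-cumulant term (which behaves like $n^{-1/2}$ once $\beta > 2/3$) still controls the maximum, so that the stated $n^{-1/2}$ bound is correct for all $\beta > 2/3$ including the range $\beta \in (2/3, 3/4]$ where $\sum|\rho(k)|^{4/3}$ may still grow. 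Finally, a minor point: the regime $\beta \in [3/4, 1)$ versus $\beta \geq 1$ requires no further subdivision because once the third-cumulant term is $O(v_n^{-3/2} n^{-1/2})$ and the fourth-cumulant term is $O(v_n^{-2} n^{-1})$ or smaller, the maximum is $n^{-1/2}$ regardless, which is why the statement can stop at "$\beta > 2/3$".
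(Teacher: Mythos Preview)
Your proposal is correct and follows the same approach as the paper: apply the optimal fourth moment theorem (\ref{optimal berry esseen}) to the cumulant bounds (\ref{third cum})--(\ref{fourth cum}), then estimate $\sum_{|k|<n}|\rho(k)|^{3/2}$ and $\sum_{|k|<n}|\rho(k)|^{4/3}$ case by case under the power-decay hypothesis. Your treatment is in fact more explicit than the paper's, which simply records the two partial-sum estimates and asserts the result, whereas you carry out the third-versus-fourth cumulant comparison in each regime (including across the $\beta=3/4$ threshold for the fourth-cumulant sum); this dominance of the third cumulant is exactly what the paper's subsequent remark alludes to but does not spell out.
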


\begin{proof}
The estimate (\ref{berry esseen general}) is a direct consequence of the
optimal estimate in (\ref{optimal berry esseen}), and the estimates (\ref%
{third cum}) and (\ref{fourth cum}) of the absolute third and fourth
cumulants we just computed. For (\ref{berry esseen particular}), since
\begin{equation*}
\sum_{|k|<n}|\rho (k)|^{3/2}\leqslant C\left\{
\begin{array}{ll}
\sqrt{n} & \mbox{ if }\beta =\frac{1}{2} \\
n^{1-\frac{3}{2}\beta } & \mbox{ if }\beta \in \left( \frac{1}{2},\frac{2}{3}%
\right) \\
~~ &  \\
\log n & \mbox{ if }\beta =\frac{2}{3} \\
~~ &  \\
1 & \mbox{ if }\beta >\frac{2}{3}%
\end{array}%
\right.
\end{equation*}

and
\begin{equation*}
\sum_{|k|<n}|\rho (k)|^{4/3}\leqslant C\left\{
\begin{array}{ll}
n & \mbox{ if }\beta =\frac{1}{2} \\
n^{\frac{-4}{3}\beta +1} & \mbox{ if }\beta \in \left( \frac{1}{2},\frac{3}{4%
}\right) \\
~~ &  \\
\log n & \mbox{ if }\beta =\frac{3}{4} \\
~~ &  \\
1 & \mbox{ if }\beta >\frac{3}{4}%
\end{array}%
\right.
\end{equation*}%
the desired result is obtained.
\end{proof}

\begin{remark}
The phenomenon observed in the proof of the previous theorem, by which the
estimate of the third cumulant dominates that of the fourth cumulant, was
observed originally in \cite[Proposition 6.3, Proposition 6.4]{BBNP} for
stationary sequences, and shown in \cite{NV2014} to be completely general in
the stationary case (i.e. not related to the power scale). Here we see that
stationarity is not required. This begs the question of whether the
phenomenon generalizes to other sequences in the second chaos, for instance
without using hypothesis $(\mathcal{H}3)$. We do not investigate this
question here, since it falls well outside the scope of our motivating topic
of parameter estimation.
\end{remark}

\begin{theorem}
\label{berry ess. theta gen. case} Assume that the hypothesis $(\mathcal{H}%
3) $ holds. If for some $\frac{1}{2}\leqslant \beta $, we have $\rho \left(
t\right) =O\left( |t|^{-\beta }\right) $ for large $|t|$, then for all $n>1$%
,
\begin{equation*}
d_{W}\left( \sqrt{\frac{n}{v_{n}}}\left( \hat{f}_{n}(X)-f\right) ,N\right)
\leqslant \sqrt{\frac{n}{v_{n}}}|A_{n}(X)-f|+\frac{C}{v_{n}^{2}\wedge
v_{n}^{3/2}}\left\{
\begin{array}{ll}
1 & \mbox{ if }\beta =\frac{1}{2} \\
~~ &  \\
n^{\frac{3}{2}-3\beta } & \mbox{ if }\beta \in \left( \frac{1}{2},\frac{2}{3}%
\right) \\
~~ &  \\
n^{-\frac{1}{2}}\log (n)^{2} & \mbox{ if }\beta =\frac{2}{3} \\
~~ &  \\
n^{-\frac{1}{2}} & \mbox{ if }\beta >\frac{2}{3}.%
\end{array}%
\right.
\end{equation*}%
In addition if $(\mathcal{H}2)$ holds,
\begin{eqnarray*}
d_{W}\left( \frac{\sqrt{n}}{\sigma }\left( \hat{f}_{n}(X)-f\right) ,N\right)
&\leqslant &C\left( \sqrt{n}|A_{n}(X)-f|+|v_{n}-\sigma ^{2}|\right) \\
&&+C\left\{
\begin{array}{ll}
n^{\frac{3}{2}-3\beta } & \mbox{ if }\beta \in \left( \frac{1}{2},\frac{2}{3}%
\right) \\
~~ &  \\
n^{-\frac{1}{2}}\log (n)^{2} & \mbox{ if }\beta =\frac{2}{3} \\
~~ &  \\
n^{-\frac{1}{2}} & \mbox{ if }\beta >\frac{2}{3}.%
\end{array}%
\right.
\end{eqnarray*}%
In particular, if the assumptions $(\mathcal{H}2)$-$(\mathcal{H}4)$ hold and
for some $\beta >1/2$, we have $\rho \left( t\right) =O\left( |t|^{-\beta
}\right) $ for large $|t|$, then, as $n\rightarrow \infty $
\begin{equation*}
\sqrt{n}(\hat{f}_{n}(X)-f)\overset{law}{\longrightarrow }\mathcal{N}%
(0,\sigma ^{2}).
\end{equation*}
\end{theorem}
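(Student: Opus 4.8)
The plan is to reduce the Wasserstein distance for $\sqrt{n/v_n}(\hat f_n(X)-f)$ to the total-variation estimate for $F_n(X)$ already established in Theorem \ref{thm cv in law of Fn}, paying a triangle-inequality price for (a) the bias $A_n(X)-f$ and (b) the difference between the deterministic normalizations $\sqrt{v_n}$ and $\sigma$. First I would use the identity $\hat f_n(X)=V_n(X)/\sqrt n+A_n(X)$ from the proof of Theorem \ref{Thm consistency}, so that
\[
\sqrt{\tfrac{n}{v_n}}\bigl(\hat f_n(X)-f\bigr)=\frac{V_n(X)}{\sqrt{v_n}}+\sqrt{\tfrac{n}{v_n}}\bigl(A_n(X)-f\bigr)=F_n(X)+\sqrt{\tfrac{n}{v_n}}\bigl(A_n(X)-f\bigr).
\]
Since the second summand is a deterministic shift, and $d_W$ is translation-sensitive with $d_W(Z+a,Z)\le|a|$ for any constant $a$, the triangle inequality gives $d_W(\sqrt{n/v_n}(\hat f_n(X)-f),N)\le d_W(F_n(X),N)+\sqrt{n/v_n}\,|A_n(X)-f|$. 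Then I would bound $d_W(F_n(X),N)\le d_{TV}(F_n(X),N)$ (Wasserstein is dominated by total variation for Lipschitz test functions bounded suitably, or more directly one invokes that for standardized variables $d_W\le C\,d_{TV}$; in the Wiener-chaos setting one can alternatively just cite that the optimal fourth moment theorem controls $d_W$ by the same max of third and fourth cumulants). Feeding in the explicit $\beta$-dependent rates from (\ref{berry esseen particular}) yields the first displayed inequality.

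For the second displayed inequality, I would pass from the normalization $\sqrt{v_n}$ to $\sigma$. Write $\sqrt{n}(\hat f_n(X)-f)/\sigma=(\sqrt{v_n}/\sigma)\cdot\sqrt{n/v_n}(\hat f_n(X)-f)$, so this is a scalar multiple $\lambda_n:=\sqrt{v_n}/\sigma$ of the variable handled above. The key auxiliary estimate is that for a scaling factor $\lambda_n\to1$ and a random variable $Y_n$ with $d_W(Y_n,N)$ small, one has $d_W(\lambda_n Y_n,N)\le d_W(\lambda_n Y_n,\lambda_n N)+d_W(\lambda_n N,N)\le\lambda_n\,d_W(Y_n,N)+|\lambda_n-1|\,E|N|$, using that $d_W$ is $1$-homogeneous under positive scaling of both arguments and the elementary bound $d_W(\lambda N,N)\le|\lambda-1|\sqrt{2/\pi}$. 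Since $|\lambda_n-1|=|\sqrt{v_n}-\sigma|/\sigma\le C|v_n-\sigma^2|$ (because $v_n\to\sigma^2>0$, so $v_n$ is eventually bounded below and $|\sqrt{v_n}-\sigma|=|v_n-\sigma^2|/(\sqrt{v_n}+\sigma)\le C|v_n-\sigma^2|$), and $\lambda_n$ is bounded, combining with the first inequality and absorbing the now-bounded $v_n^{2}\wedge v_n^{3/2}$ into the constant $C$ (by $(\mathcal H2)$, $v_n$ is bounded away from $0$ and $\infty$) gives
\[
d_W\Bigl(\tfrac{\sqrt n}{\sigma}(\hat f_n(X)-f),N\Bigr)\le C\bigl(\sqrt n\,|A_n(X)-f|+|v_n-\sigma^2|\bigr)+C\cdot(\beta\text{-dependent rate}),
\]
which is the claimed bound. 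Note the $\beta=1/2$ line disappears because there is no convergence to exploit in that regime — or rather, under $(\mathcal H2)$ the leading $O(1)$ term is still present but the statement simply omits it since it does not vanish; here I would be careful to phrase the display exactly as in the theorem, keeping only the $\beta>1/2$ cases in the last brace.

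Finally, for the convergence-in-law conclusion, assume $(\mathcal H2)$--$(\mathcal H4)$ and $\beta>1/2$. Then $\sqrt n|A_n(X)-f|\to0$ by $(\mathcal H4)$, $|v_n-\sigma^2|\to0$ by $(\mathcal H2)$, and the $\beta$-dependent rate is $O(n^{-1/2}\log(n)^2)$ or faster, hence $\to0$; therefore $d_W(\sqrt n(\hat f_n(X)-f)/\sigma,N)\to0$, and since $d_W$ metrizes convergence in distribution (against a fixed limit with finite first moment), $\sqrt n(\hat f_n(X)-f)/\sigma\Rightarrow N(0,1)$, i.e. $\sqrt n(\hat f_n(X)-f)\Rightarrow\mathcal N(0,\sigma^2)$.

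\textbf{Main obstacle.} The only genuinely delicate points are the two "transfer" lemmas: that $d_W$ of a deterministic shift costs at most the shift size, and that rescaling by $\lambda_n\to1$ costs at most $C|\lambda_n-1|$ plus a multiplicative $\lambda_n$ on the original distance. Both are standard consequences of the definition of $d_W$ via $\mathrm{Lip}(1)$ test functions, but one must verify the homogeneity bookkeeping carefully — in particular that $d_W(\lambda Y,\lambda Z)=|\lambda|\,d_W(Y,Z)$ and that $d_W(\lambda N,N)$ admits the clean bound above via coupling $(\lambda N,N)$ on the same $N$. A secondary, more cosmetic issue is reconciling the $v_n^{2}\wedge v_n^{3/2}$ prefactor of Theorem \ref{thm cv in law of Fn} with the clean constant $C$ in the second display: this is legitimate only once $(\mathcal H2)$ is invoked to bound $v_n$ from below, and I would state this explicitly rather than silently absorb it.
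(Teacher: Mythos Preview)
Your proposal is correct and follows essentially the same route as the paper: the paper's proof is a three-line sketch invoking the decomposition $\sqrt{n/v_n}(\hat f_n(X)-f)=F_n(X)+\sqrt{n/v_n}(A_n(X)-f)$, Theorem \ref{thm cv in law of Fn}, and ``standard properties of the Wasserstein distance,'' which is exactly the triangle-inequality-plus-shift-plus-rescaling argument you spell out. Your caution about the passage from $d_{TV}$ to $d_W$ is well placed --- the cleanest justification is the alternative you mention, namely that the Nourdin--Peccati machinery bounds $d_W$ by the same third/fourth-cumulant quantity directly, rather than appealing to a general $d_W\leqslant C\,d_{TV}$ inequality (which does not hold without moment control).
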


\begin{proof}
Theorem \ref{berry ess. theta gen. case} is a direct consequence of Theorem %
\ref{thm cv in law of Fn}, standard properties of the Wasserstein distance,
and the fact that
\begin{equation*}
\sqrt{\frac{n}{v_{n}}}\left( \hat{f}_{n}(X)-f\right) =F_{n}+\sqrt{\frac{n}{%
v_{n}}}|A_{n}(X)-\mu (\theta )|.
\end{equation*}
\end{proof}

\begin{remark}
The assumption that $|E[X_{t}X_{s}]|\leqslant C\left\vert t\right\vert
^{-\beta }$ for some $\beta >1/2$ corresponds to the notion of moderating
how long the memory of the data might be. For instance, when $X$ represents
the discrete-time increments of a process based on fBm with Hurst parameter $%
H\in \left( 0,1\right) $, we expect that $\beta =2-2H$. The restriction $%
\beta >1/2$ means $H<3/4$, a well-known threshold for the limit of validity
of central limit theorems for quadratic variations of long-memory processes.
See for instance the excellent treatment of the classical Breuer-Major
theorem in \cite[Chapter 7]{NP-book}. The above theorem shows that the speed
of convergence in the CLT reaches the so-called Berry-Esséen rate of $1/%
\sqrt{n}$ as soon as $\beta >2/3$, as long as the terms coming from the bias
and variance estimates, namely $\sqrt{n}|A_{n}(X)-f|$ and $|v_{n}-\sigma
^{2}|$, are no greater than that same order $1/\sqrt{n}$. The threshold $%
\beta >2/3$ coincides with $H<2/3$ when one translates into the
Hurst-parameter memory scale; this had already been identified for the
canonical stationary case of fGn in \cite{BBNP}, the paper which was the
precursor to the optimal fourth moment theorem in \cite{NP2015}.
\end{remark}

\section{Application to Gaussian Ornstein Uhlenbeck processes}

In this section we will apply the above results to Ornstein-Uhlenbeck
processes driven by a Gaussian process which does not necessarily have
stationary increments. More precisely we will study the cases which
correspond to sub-fractional Brownian motion and to bifractional Brownian
motion.

We consider the Gaussian Ornstein-Uhlenbeck process $X=\{X_{t}\}_{t\geq 0}$
defined by the following linear stochastic differential equation
\begin{equation}
dX_{t}=-\theta X_{t}dt+dG_{t},\hspace{1cm}X_{0}=0;\hspace{1cm}  \label{GOU}
\end{equation}%
where $G$ is an arbitrary mean-zero Gaussian process which has Hölder
continuous paths of strictly positive order, and $\theta >0$ is an unknown
parameter. Our goal is to estimate $\theta $ under the discrete observations
$\{X_{1},\ldots ,X_{n}\}$ as $n\rightarrow \infty $. The equation (\ref{GOU}%
) has the following explicit solution
\begin{equation*}
X_{t}=\e^{-\theta t}\int_{0}^{t}\e^{\theta s}dG_{s},\hspace{1cm}t\geq 0
\end{equation*}%
where the integral can be understood in the Young sense, since $\e^{\theta
s} $ is a Lipshitz function. The Young sense coincide in this case with
Wiener integral sense here. As mentioned in the introduction, we will
consider two different cases for $G$: the sub-fractional Brownian motion,
and the bi-fractional Brownian motion.

\subsection{Sub-fractional Brownian motion}

Consider a sub-fractional Brownian motion (sfBm) $S^{H}:=\{S_{t}^{H},t\geq
0\}$ with parameter $H\in (0,1)$ : this is the mean-zero Gaussian process
with covariance function
\begin{equation*}
R_{S^{H}}(s,t):=E\left( S_{t}^{H}S_{s}^{H}\right) =t^{2H}+s^{2H}-\frac{1}{2}%
\left( (t+s)^{2H}+|t-s|^{2H}\right) .
\end{equation*}%
Note that, when $H=\frac{1}{2}$, $S^{\frac{1}{2}}$ is a standard Brownian
motion. By Kolmogorov's continuity criterion and the fact that
\begin{equation*}
E\left( S_{t}^{H}-S_{s}^{H}\right) ^{2}\leqslant (2-2^{2H-1})|s-t|^{2H};\
s,\ t\geq ~0,
\end{equation*}%
we deduce that $S^{H}$ has Hölder continuous paths of order $H-\varepsilon $%
, for every $\varepsilon \in (0,H)$. In this section we replace $G$ given in
(\ref{GOU}) by $S^{H}$. More precisely, we will estimate the drift parameter
$\theta $ in the following equation
\begin{equation}
dX_{t}=-\theta X_{t}dt+dS_{t}^{H},\quad X_{0}=0.  \label{subFOU}
\end{equation}

\begin{proposition}
\label{rate cv of 2 mmt sbfOU} Suppose that $H\in (0,1)$. Set
\begin{equation}
f_{H}(\theta ):=\frac{H\Gamma (2H)}{\theta ^{2H}}.  \label{fH}
\end{equation}%
Then for every $t>0$,
\begin{equation*}
|E[X_{t}^{2}]-f_{H}(\theta )|\leqslant Ct^{2H-2}.
\end{equation*}%
Hence
\begin{equation*}
\sqrt{n}|A_{n}(X)-f_{H}(\theta )|\leqslant \frac{C}{\sqrt{n}}%
\sum_{i=1}^{n}i^{2H-2}\leqslant C\left\{
\begin{array}{ll}
n^{-\frac{1}{2}} & \mbox{ if \hspace{0.2cm}}0<H<1/2 \\
~~ &  \\
n^{-\frac{1}{2}}log(n) & \mbox{ if \hspace{0.2cm}}H=1/2 \\
~~ &  \\
n^{2H-3/2} & \mbox{ if \hspace{0.2cm}}H>1/2.%
\end{array}%
\right.
\end{equation*}%
In particular, if $0<H<3/4$, the hypothesis $(\mathcal{H}4)$ holds.
\end{proposition}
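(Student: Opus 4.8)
The plan is to derive everything from the single pointwise estimate
\[
\big|E[X_t^2]-f_H(\theta)\big|\leqslant Ct^{2H-2},\qquad t>0.
\]
Granting this, the remaining assertions are immediate: $|A_n(X)-f_H(\theta)|\leqslant \frac1n\sum_{i=1}^n|E[X_i^2]-f_H(\theta)|\leqslant \frac Cn\sum_{i=1}^n i^{2H-2}$, hence $\sqrt n\,|A_n(X)-f_H(\theta)|\leqslant \frac{C}{\sqrt n}\sum_{i=1}^n i^{2H-2}$, and the three displayed regimes follow from the elementary fact that $\sum_{i=1}^n i^{2H-2}$ stays bounded for $H<1/2$, grows like $\log n$ for $H=1/2$, and grows like $n^{2H-1}$ for $H>1/2$. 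Hypothesis $(\mathcal{H}4)$ then holds for $0<H<3/4$ because the resulting bounds $n^{-1/2}$, $n^{-1/2}\log n$ and $n^{2H-3/2}$ all tend to $0$ precisely in that range.

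For the pointwise estimate, use the Young integration-by-parts formula (legitimate since $s\mapsto e^{\theta s}$ is Lipschitz, $S^H$ is a.s. H\"older, and the Young integral agrees with the Wiener integral) to write $X_t=S^H_t-\theta\int_0^t e^{-\theta(t-s)}S^H_s\,ds$, so that $E[X_t^2]$ is a bilinear expression in $R_{S^H}$. When $H>1/2$ it is most transparent to pass to the covariance density: since $\partial^2_{sr}R_{S^H}(s,r)=H(2H-1)\big(|s-r|^{2H-2}-(s+r)^{2H-2}\big)$ is locally integrable,
\[
E[X_t^2]=e^{-2\theta t}H(2H-1)\!\int_0^t\!\!\int_0^t\! e^{\theta(s+r)}\big(|s-r|^{2H-2}-(s+r)^{2H-2}\big)\,ds\,dr=:I_1(t)-I_2(t).
\]
Here $I_1(t)$ coincides with the second moment of the fOU process driven by standard fBm $B^H$; the substitution $u=t-s$, $v=t-r$ gives $I_1(t)=H(2H-1)\int_0^t\int_0^t e^{-\theta(u+v)}|u-v|^{2H-2}\,du\,dv$, which increases as $t\uparrow\infty$ to $H(2H-1)\int_0^\infty\int_0^\infty e^{-\theta(u+v)}|u-v|^{2H-2}\,du\,dv=H(2H-1)\Gamma(2H-1)\theta^{-2H}=f_H(\theta)$, the last equality using $(2H-1)\Gamma(2H-1)=\Gamma(2H)$; the error $f_H(\theta)-I_1(t)$, an integral over $[0,\infty)^2\setminus[0,t]^2$, is $O(e^{-\theta t})$ because $\int_0^\infty e^{-\theta v}|u-v|^{2H-2}\,dv$ is bounded in $u$. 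The same substitution gives $I_2(t)=H(2H-1)\int_0^t\int_0^t e^{-\theta(u+v)}(2t-u-v)^{2H-2}\,du\,dv$; splitting the domain at $\{u+v\leqslant t\}$, where $(2t-u-v)^{2H-2}\leqslant t^{2H-2}$, versus $\{u+v>t\}$, where the factor $e^{-\theta(u+v)}\leqslant e^{-\theta t}$ compensates the (integrable, $O(t^{2H})$) mass of the singular kernel, yields $|I_2(t)|\leqslant Ct^{2H-2}$. Combining these, and noting that $e^{-\theta t}=o(t^{2H-2})$ while the left side is bounded near $t=0$, gives the pointwise estimate for $H>1/2$.

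When $H=1/2$ this is the classical Ornstein--Uhlenbeck process, $E[X_t^2]=\frac1{2\theta}(1-e^{-2\theta t})$ and $f_{1/2}(\theta)=\frac1{2\theta}$, so the bound holds with exponential error. When $H<1/2$ the density $\partial^2_{sr}R_{S^H}$ is no longer a function, so one stays with the integration-by-parts representation and evaluates $E[X_t^2]$ directly from the continuous kernel $R_{S^H}(s,r)=s^{2H}+r^{2H}-\tfrac12(s+r)^{2H}-\tfrac12|s-r|^{2H}$; after the change of variables $u=t-s$, $v=t-r$, the contributions built from $|u-v|^{2H}$ again reproduce $f_H(\theta)$ up to an exponentially small term, while those involving $(2t-u)^{2H}$, $(2t-u-v)^{2H}$ and the stray powers $t^{2H}$, $(t-u)^{2H}$ individually diverge but cancel, leaving a remainder of order $t^{2H-2}$. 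I expect this last case to be the main obstacle: one must track the cancellation of the genuinely $O(t^{2H})$ pieces before the surviving $O(t^{2H-2})$ term can be isolated, and control the integrable singularities of the various power kernels against the weight $e^{-\theta(\cdot)}$. By contrast, identifying the limiting constant $f_H(\theta)$ is a short Gamma-function computation, and the passage from the pointwise estimate to the stated $A_n$-bound and to $(\mathcal{H}4)$ is routine.
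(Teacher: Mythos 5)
Your reduction of the proposition to the single pointwise bound $|E[X_t^2]-f_H(\theta)|\leqslant Ct^{2H-2}$, and the subsequent summation argument for $A_n(X)$ and $(\mathcal{H}4)$, are correct and identical to the paper's. Your treatment of $H>1/2$ via the covariance density $\partial^2_{sr}R_{S^H}(s,r)=H(2H-1)\big(|s-r|^{2H-2}-(s+r)^{2H-2}\big)$ is also sound: after your change of variables, $I_2(t)$ is exactly the term $b_H(t)=e^{-2\theta t}\int_0^t ds\,e^{\theta s}\int_0^s dr\,e^{\theta r}(s+r)^{2H-2}$ that the paper identifies as the dominant $O(t^{2H-2})$ contribution, and $I_1(t)$ plays the role of the fBm-type part converging exponentially to $f_H(\theta)$.

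The gap is the case $H\in(0,1/2)$, which you explicitly leave as a plan. There, the asserted cancellation of the individually divergent $O(t^{2H})$ pieces (coming from $R_{S^H}(t,t)$, the single integrals against $R_{S^H}(s,t)$, and the double integral of $R_{S^H}$ itself) is precisely where the content of the estimate lies; "they cancel, leaving a remainder of order $t^{2H-2}$" is asserted, not proved, and the cancellation is needed even to see that $E[X_t^2]$ stays bounded, let alone that the surviving term is $O(t^{2H-2})$ rather than, say, $O(t^{2H-1})$. The paper sidesteps this entirely by importing from \cite{EEO} a closed-form identity valid for every $H\in(0,1)$,
\begin{equation*}
E[X_t^2]=\Delta_H(t)+\theta I_{2H}(t)-\tfrac{\theta^2}{2}J_{2H}(t),
\end{equation*}
in which the only kernels appearing are $s^{2H-1}$, $(s+r)^{2H-2}$, $(t-s)^{2H}$ and $|s-r|^{2H}$ --- all locally integrable for every $H\in(0,1)$. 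The cancellation you are worried about is thus already encoded algebraically: the limit emerges from the elementary identity $\theta\,\Gamma(2H+1)\theta^{-2H-1}-\tfrac{\theta^2}{2}\Gamma(2H+1)\theta^{-2H-2}=H\Gamma(2H)\theta^{-2H}=f_H(\theta)$ with exponentially small error, and the entire $O(t^{2H-2})$ remainder is carried by $\Delta_H(t)$, whose dominant piece is the same $(s+r)^{2H-2}$ double integral you already bounded for $H>1/2$ (that bound works verbatim for $H<1/2$). So the fix is to establish or quote that identity for all $H$ rather than to chase the cancellation term by term; as written, your proof covers only $H\geqslant 1/2$.
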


\begin{proof}
Since $X_{t}=\e^{-\theta t}\int_{0}^{t}\e^{\theta s}dS_{s}^{H},\quad t\geq 0$%
, we can write (see \cite{EEO})
\begin{equation*}
E[X_{t}^{2}]=\Delta _{H}(t)+\theta I_{2H}(t)-\frac{\theta ^{2}}{2}J_{2H}(t),
\end{equation*}%
where
\begin{equation*}
\Delta _{H}(t)=2He^{-2\theta t}\int_{0}^{t}s^{2H-1}e^{\theta
s}ds-2H(2H-1)e^{-2\theta t}\int_{0}^{t}dse^{\theta s}\int_{0}^{s}dre^{\theta
r}(s+r)^{2H-2},
\end{equation*}%
and
\begin{equation*}
J_{2H}(t):=\e^{-2\theta t}\int_{0}^{t}\int_{0}^{t}\e^{\theta s}\e^{\theta
r}|s-r|^{2H}drds\text{;}\qquad I_{2H}(t):=\e^{-\theta t}\int_{0}^{t}\e%
^{\theta s}(t-s)^{2H}ds.
\end{equation*}%
Hence
\begin{equation*}
|E[X_{t}^{2}]-f_{H}(\theta )|\leqslant |\Delta _{H}(t)|+\theta \left\vert
I_{2H}(t)-\frac{2H\Gamma (2H)}{\theta ^{2H+1}}\right\vert +\frac{\theta ^{2}%
}{2}\left\vert J_{2H}(t)-\frac{2H\Gamma (2H)}{\theta ^{2H+2}}\right\vert .
\end{equation*}%
We will check that each term of the right side of the last inequality is
less than $Ct^{2H-2}$. We can write
\begin{equation*}
\Delta _{H}(t)=2Ha_{H}(t)-2H(2H-1)b_{H}(t)
\end{equation*}%
where
\begin{equation*}
a_{H}(t):=e^{-2\theta t}\int_{0}^{t}s^{2H-1}e^{\theta s}ds;\qquad
b_{H}(t):=e^{-2\theta t}\int_{0}^{t}dse^{\theta s}\int_{0}^{s}dre^{\theta
r}(s+r)^{2H-2}.
\end{equation*}%
\newline
It is clear that
\begin{equation*}
|a_{H}(t)|\leqslant \frac{t^{2H}}{2H}e^{-\theta t}\leqslant C\e^{-\frac{%
\theta t}{2}}.
\end{equation*}%
On the other hand,
\begin{eqnarray*}
|b_{H}(t)| &=&e^{-2\theta t}\int_{0}^{t}dse^{\theta
s}\int_{0}^{s}dre^{\theta r}(s+r)^{2H-2}=\e^{-2\theta t}\int_{0}^{t}\e%
^{\theta s}ds\int_{s}^{2s}\e^{\theta (u-s)}u^{2H-2}du \\
&=&\e^{-2\theta t}\int_{0}^{2t}\e^{\theta u}u^{2H-2}\left( t\wedge u-\frac{u%
}{2}\right) du=\frac{\e^{-2\theta t}}{2}\left( \int_{0}^{t}\e^{\theta
u}u^{2H-1}du+\int_{t}^{2t}(2t-u)\e^{\theta u}u^{2H-2}du\right) \\
&\leqslant &\frac{t^{2H}}{4H}\e^{-\theta t}+t^{2H-2}\int_{t}^{2t}(2t-u)\e%
^{\theta u}du\leqslant Ct^{2H-2}.
\end{eqnarray*}%
Hence, for every $t>0$, $|\Delta _{H}(t)|\leqslant Ct^{2H-2}$. We also have
for $t\geq 0$
\begin{eqnarray*}
\left\vert J_{2H}(t)-\frac{\Gamma (2H+1)}{\theta ^{2H+2}}\right\vert
&=&\left\vert \frac{1}{\theta }\left( \int_{0}^{t}u^{2H}\e^{-\theta u}du-\e%
^{-2\theta t}\int_{0}^{t}u^{2H}\e^{\theta u}du\right) -\frac{\Gamma (2H+1)}{%
\theta ^{2H+2}}\right\vert \\
&\leqslant &\frac{1}{\theta }\left\vert \int_{t}^{\infty }u^{2H}\e^{-\theta
u}du-\e^{-2\theta t}\int_{0}^{t}u^{2H}\e^{\theta u}du\right\vert \\
&\leqslant &\frac{\e^{-\theta t/2}}{\theta }\left( \int_{t}^{\infty }u^{2H}\e%
^{-\theta u/2}du+\int_{0}^{t}u^{2H}\e^{-\theta u/2}du\right) \\
&\leqslant &C\e^{-\theta t/2}.
\end{eqnarray*}%
For the last term, let $t\geq 0$
\begin{eqnarray*}
\left\vert I_{2H}(t)-\frac{\Gamma (2H+1)}{\theta ^{2H+1}}\right\vert
&=&\left\vert \int_{0}^{t}u^{2H}\e^{-\theta u}du-\frac{\Gamma (2H+1)}{\theta
^{2H+1}}\right\vert \\
&=&\int_{t}^{+\infty }u^{2H}\e^{-\theta u}du\leqslant \e^{-\theta
t/2}\int_{t}^{+\infty }\e^{-\theta u/2}u^{2H}du\leqslant C\e^{-\theta t/2}
\end{eqnarray*}%
which completes the proof.
\end{proof}

\begin{proposition}
\label{H3 subfBm} For every $H\in(0,1)$, the hypothesis $(\mathcal{H}3)$
holds. More precisely, we have for large $|t-s|$
\begin{eqnarray*}
\left|E[X_{t}X_{s}]\right|\leqslant C |t-s|^{2H-2}.
\end{eqnarray*}
\end{proposition}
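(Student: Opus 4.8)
The plan is to obtain a closed-form or near-closed-form expression for the covariance $E[X_tX_s]$ of the sub-fractional OU process, and then bound each piece. Since $X_t = \e^{-\theta t}\int_0^t \e^{\theta u}\,dS_u^H$, integration by parts in the Young/Wiener sense gives $X_t = S_t^H - \theta\e^{-\theta t}\int_0^t \e^{\theta u}S_u^H\,du$, so that
\begin{equation*}
E[X_tX_s] = \e^{-\theta(t+s)}\int_0^t\int_0^s \e^{\theta(u+v)}\,\frac{\partial^2 R_{S^H}}{\partial u\,\partial v}(u,v)\,du\,dv,
\end{equation*}
where $R_{S^H}$ is the sfBm covariance given above. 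Differentiating $R_{S^H}(u,v)=u^{2H}+v^{2H}-\tfrac12((u+v)^{2H}+|u-v|^{2H})$ twice, the first two terms drop out and one is left with the mixed second derivative of $-\tfrac12(u+v)^{2H}-\tfrac12|u-v|^{2H}$, which is $-H(2H-1)(u+v)^{2H-2}+H(2H-1)|u-v|^{2H-2}$ (the sign of the $|u-v|$ term being positive because differentiating once in $v$ and once in $u$ introduces two minus signs). So $E[X_tX_s]$ splits as a difference of two double integrals, one with kernel $(u+v)^{2H-2}$ and one with kernel $|u-v|^{2H-2}$, each multiplied by $H(2H-1)$ and the exponential weights.

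Next I would estimate the two double integrals separately for large $|t-s|$, say $s<t$ with $t-s$ large. For the $(u+v)^{2H-2}$-kernel piece, after the change of variables absorbing the exponentials, the dominant contribution comes from $u$ near $t$ and $v$ near $s$, where $(u+v)^{2H-2}\sim (t+s)^{2H-2}\le |t-s|^{2H-2}$ when $2H-2<0$; the exponential factors $\e^{-\theta(t-u)}$ and $\e^{-\theta(s-v)}$ localize the integral near the corner and produce an $O(1)$ constant, giving a bound $C(t+s)^{2H-2}$. For the $|u-v|^{2H-2}$-kernel piece one argues similarly: with $s<t$ and $t-s$ large, on the region where $u$ is near $t$ and $v$ near $s$ one has $|u-v|\ge t-s - O(1)$, so $|u-v|^{2H-2}\le C|t-s|^{2H-2}$, and again the exponential weights make the remaining integral bounded. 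Combining, $|E[X_tX_s]|\le C|t-s|^{2H-2}$ for large $|t-s|$. For the case $H=1/2$ the kernels degenerate (the factor $H(2H-1)$ vanishes) and $X$ is the classical OU process whose covariance decays exponentially, which is certainly $O(|t-s|^{-1})$; and for $H$ near $1$ where $2H-2$ is close to $0$ the same computation still gives the stated polynomial bound. This also shows $(\mathcal{H}3)$ holds with $\rho(t)=(1+|t|)^{2H-2}$ (up to renormalizing so $\rho(0)=1$), i.e. $\beta=2-2H$.

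The main obstacle I anticipate is the bookkeeping in the $|u-v|^{2H-2}$ double integral: the kernel has an integrable singularity along the diagonal $u=v$, and when $s<t$ are not well separated (which does not occur in the "large $|t-s|$" regime but must be checked to know the bound is uniform enough to feed into Proposition \ref{rate cv of 2 mmt sbfOU}-style sums) one must be careful that the singularity does not spoil the estimate. In the large-$|t-s|$ regime this is a non-issue since the diagonal is far from the relevant corner, but making the splitting of the integration domain clean — and controlling the crossover terms where one variable is small — is where some care is needed. These calculations are routine but slightly tedious; as in the rest of Section 4, I would expect to relegate the more painful integral estimates to a lemma in the Appendix and quote the resulting bound here.
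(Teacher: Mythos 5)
Your overall plan (write $E[X_tX_s]$ via the mixed second derivative of the sfBm covariance, observe that the kernel is $H(2H-1)\bigl(|u-v|^{2H-2}-(u+v)^{2H-2}\bigr)$, and localize near the corner $(t,s)$ using the exponential weights) is the right one and matches the paper's computation in spirit. But there is a genuine gap for $H\in(0,1/2)$: your representation
\begin{equation*}
E[X_tX_s]=\e^{-\theta(t+s)}\int_0^t\!\!\int_0^s\e^{\theta(u+v)}\,\frac{\partial^2R_{S^H}}{\partial u\,\partial v}(u,v)\,du\,dv
\end{equation*}
integrates over the full rectangle $[0,t]\times[0,s]$, which contains the whole diagonal segment $\{u=v,\ 0\le u\le s\}$. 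Along that diagonal the kernel behaves like $|u-v|^{2H-2}$, and this is integrable across the diagonal only when $2H-2>-1$, i.e.\ $H>1/2$. For $H<1/2$ the double integral as written diverges, so the identity cannot be the starting point of the estimate; your remark that the singularity is ``integrable'' and ``a non-issue in the large $|t-s|$ regime'' is incorrect for half the parameter range (the diagonal sits inside the domain no matter how large $t-s$ is, and the exponential weight there is small but multiplies a divergent integral). Since the proposition is claimed for all $H\in(0,1)$, this must be fixed. The paper's fix is exactly to split $\int_0^t=\int_0^s+\int_s^t$ \emph{before} the second integration by parts, which produces the decomposition of Lemma \ref{calculcov}: $E[X_sX_t]=\e^{-\theta(t-s)}E[X_s^2]+\e^{-\theta(t+s)}\int_s^t\int_0^s\e^{\theta(u+v)}\partial^2_{uv}R\,du\,dv$. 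The diagonal block $[0,s]^2$ is then absorbed into $\e^{-\theta(t-s)}E[X_s^2]$ (exponentially small, and $E[X_s^2]$ is bounded by Proposition \ref{rate cv of 2 mmt sbfOU}), while the remaining integral lives on $[s,t]\times[0,s]$, where $v\ge s\ge u$ and the singularity reduces to an integrable corner singularity $(v-u)^{2H-2}$ with $2H-2>-2$.

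Once the domain is restricted in this way, your corner-localization argument for the two kernel pieces goes through (and the $(u+v)^{2H-2}$ piece is even simpler: on $[s,t]\times[0,s]$ one has $u+v\ge v-u$, so for $2H-2<0$ that kernel is dominated pointwise by the other one, which is how the paper folds the two terms together). Where the paper then diverges from your plan is in how it bounds the remaining integral: rather than estimating it directly, it recognizes $H(2H-1)\e^{-\theta(t+s)}\int_s^t\int_0^s\e^{\theta(u+v)}(v-u)^{2H-2}\,du\,dv$ as a covariance of increments of the stationary fractional OU process $Z^H_t=\e^{-\theta t}\int_{-\infty}^t\e^{\theta u}dB^H_u$ and quotes the known decay $E[Z^H_rZ^H_0]\le Cr^{2H-2}$ from \cite{CKM}. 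Your direct estimate is a legitimate, more self-contained alternative, but you should carry it out on the restricted domain and verify the sub-leading regions (e.g.\ $v\le t-(t-s)/2$) carefully, since for $H<1/2$ one cannot simply say ``the exponential kills a possibly large integral'' without first knowing the integral is finite.
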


\begin{proof}
If $H=\frac{1}{2}$, it is easy to see that $\left\vert
E[X_{t}X_{s}]\right\vert \leqslant \frac{1}{2\theta }e^{-\theta
(t-s)}\leqslant C|t-s|^{2H-2}$ for $|t-s|>0$.\newline
Now, suppose that $H\in (0,\frac{1}{2})\cup (\frac{1}{2},1)$. Thanks to
Lemma \ref{calculcov} we get
\begin{eqnarray*}
E[X_{t}X_{s}] &=&\e^{-\theta (t-s)}E[X_{s}^{2}]+H(2H-1)\e^{-\theta t}\e%
^{-\theta s}\int_{s}^{t}dv\e^{\theta v}\int_{0}^{s}du\e^{\theta
u}(v-u)^{2H-2} \\
&&\quad -H(2H-1)\e^{-\theta t}\e^{-\theta s}\int_{s}^{t}dv\e^{\theta
v}\int_{0}^{s}du\e^{\theta u}(u+v)^{2H-2}.
\end{eqnarray*}%
Hence,
\begin{equation*}
|E[X_{s}X_{t}]|\leqslant \e^{-\theta (t-s)}E[X_{s}^{2}]+2\e^{-\theta t}\e%
^{-\theta s}H|2H-1|\int_{s}^{t}\int_{0}^{s}\e^{\theta u}\e^{\theta
v}(v-u)^{2H-2}dudv.
\end{equation*}%
Define for every $\alpha\in(0,1)$
\begin{equation}
Z^{\alpha}_{t}:=\e^{-\theta t}\int_{-\infty }^{t}\e^{\theta
u}dB_{u}^{\alpha},\quad t\in \mathbb{R}  \label{stationary-FOU}
\end{equation}%
which is a stationary Gaussian process, where $B^{\alpha}$ is a fractional
Brownian motion with Hurst parameter $\alpha\in (0,1)$. Then, we can write
for $s<t$,
\begin{eqnarray*}
&&H(2H-1)\e^{-\theta t}\e^{-\theta s}\int_{s}^{t}\int_{0}^{s}\e^{\theta u}\e%
^{\theta v}(v-u)^{2H-2}dudv \\
&=&H(2H-1)\e^{-\theta (t-s)}\int_{0}^{t-s}dy\e^{\theta y}\int_{-s}^{0}dx\e%
^{\theta x}(y-x)^{2H-2} \\
&=&\e^{-\theta (t-s)}E\left[ \left( \int_{0}^{t-s}\e^{\theta
y}dB_{y}^{H}\right) \left(\int_{-s}^{0}\e^{\theta x}dB_{x}^{H}\right) \right]
=E\left[ \left( Z^H_{t-s}-\e^{-\theta (t-s)}Z^H_{0}\right) \left( Z^H_{0}-\e%
^{-\theta s}Z^H_{-s}\right) \right] \\
&=&E[Z^H_{t-s}Z^H_{0}]-\e^{-\theta s}E[Z^H_{0}Z^H_{t}]-\e^{-\theta
(t-s)}E[\left(Z^H_{0}\right)^{2}]+\e^{-\theta t}E[Z^H_{0}Z^H_{s}] \\
&\leqslant &C\left( (t-s)^{2H-2}+\e^{-\theta (t-s)}\right) .
\end{eqnarray*}%
The last inequality comes from the fact that for large $r>0$ $%
E[Z^H_{r}Z^H_{0}]\leqslant C|r|^{2H-2}$ (see \cite{CKM}, or \cite{EV}).
\end{proof}

Define the following rates of convergence,
\begin{eqnarray}
\varphi_{\alpha}(n) =\left\{
\begin{array}{ll}
n^{-1} & \mbox{ if } 0 < \alpha < 1/2 \\
~~ &  \\
~~ &  \\
n^{2\alpha - 2} & \mbox{ if } 1/2 \leqslant \alpha < 1,%
\end{array}
\right.
\end{eqnarray}
and
\begin{eqnarray}
\psi_{\alpha}(n) =\left\{
\begin{array}{ll}
n^{-1} & \mbox{ if } 0 < \alpha < 1/2 \\
~~ &  \\
~~ &  \\
n^{4\alpha - 3} & \mbox{ if } 1/2 \leqslant \alpha < 3/4.%
\end{array}
\right.
\end{eqnarray}

\begin{proposition}
\label{rate cv qua-varia sbfOU} Let $0<H<3/4$. Define
\begin{equation}
\sigma _{H}^{2}:=\rho _{H}(0)^{2}+2\sum_{i\in \mathbb{Z}\backslash
\{0\}}\rho _{H}(i)^{2}  \label{sigmaH}
\end{equation}%
where $\rho _{H}(k):=E[Z^H_{k}Z^H_{0}]$, $k\in \mathbb{N}$ and $Z$ is the
process given in (\ref{stationary-FOU}). Note that $\rho
_{H}(0)=f_{H}(\theta )$. Then
\begin{equation*}
|E[V_{n}(X)^{2}]-\sigma _{H}^{2}|\leqslant C\psi _{H}(n).
\end{equation*}%
In particular, the hypothesis $(\mathcal{H}2)$ holds. \newline
If $H=3/4$, we have
\begin{equation*}
\left\vert \frac{E[V_{n}(X)^{2}]}{\log (n)}-\frac{9}{16\theta ^{4}}%
\right\vert \leqslant C\log (n)^{-1}.
\end{equation*}
\end{proposition}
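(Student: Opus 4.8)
The plan is to express $E[V_n(X)^2] = \frac{2}{n}\sum_{j,k=1}^n (E[X_jX_k])^2$ and compare it term-by-term with the stationary surrogate $\frac{2}{n}\sum_{j,k=1}^n \rho_H(j-k)^2$, whose limit is computed by a standard Cesàro argument to be $\sigma_H^2$ when $0<H<3/4$ (and which diverges like $\log n$ when $H=3/4$, since then $\rho_H(k)^2 \sim C k^{-1}$). The first step is therefore to recall, from the proof of Proposition \ref{H3 subfBm}, the decomposition $E[X_tX_s] = \e^{-\theta(t-s)}E[X_s^2] + H(2H-1)\e^{-\theta(t-s)}\big(\rho_H(t-s) - r_n(s,t)\big)$ for $s<t$, where the "error" piece $r_n(s,t)$ is the non-stationary remainder involving the $(u+v)^{2H-2}$ kernel together with the boundary terms $\e^{-\theta s}E[Z^H_0 Z^H_t]$, $\e^{-\theta t}E[Z^H_0 Z^H_s]$ seen in that proof. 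Writing $E[X_s^2] = f_H(\theta) + \varepsilon(s)$ with $|\varepsilon(s)|\le C s^{2H-2}$ from Proposition \ref{rate cv of 2 mmt sbfOU}, and noting $\rho_H(0) = f_H(\theta)$, one gets $E[X_jX_k] = \rho_H(j-k) + \delta(j,k)$ where $\delta(j,k)$ collects: (i) $\e^{-\theta|j-k|}\varepsilon(j\wedge k)$, (ii) the $(u+v)^{2H-2}$-type remainder, and (iii) the $\e^{-\theta\,(\cdot)}$ exponentially small boundary terms.

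The second step is to estimate $|\delta(j,k)|$ uniformly. The key bound to establish is $|\delta(j,k)| \le C\big( (j\wedge k)^{2H-2} \e^{-\theta|j-k|/2} + (j\vee k)^{2H-2}\wedge \text{(something summable)} \big)$; more precisely, from the calculations inside the proof of Proposition \ref{H3 subfBm} one extracts $|r_n(s,t)| \le C\big((t\wedge s)^{2H-2} + \e^{-\theta(t\vee s)/2}\big)$ type control, so that $|\delta(j,k)| \le C\,(j\wedge k)^{2H-2}$ for $0<H<3/4$ when $H\neq 1/2$, and is exponentially small when $H\le 1/2$. Then one uses the algebraic identity $a^2 - b^2 = (a-b)(a+b)$ with $a = E[X_jX_k]$, $b=\rho_H(j-k)$:
\begin{equation*}
\Big| \frac{2}{n}\sum_{j,k=1}^n \big( (E[X_jX_k])^2 - \rho_H(j-k)^2 \big) \Big| \le \frac{C}{n}\sum_{j,k=1}^n |\delta(j,k)|\,\big(|\rho_H(j-k)| + |\delta(j,k)|\big).
\end{equation*}
Since $|\rho_H(j-k)| \le C|j-k|^{2H-2}$ is bounded and $|\delta(j,k)| \le C (j\wedge k)^{2H-2}$, the double sum is controlled by $\frac{C}{n}\sum_{j,k=1}^n (j\wedge k)^{2H-2}$, and $\sum_{j,k=1}^n (j\wedge k)^{2H-2} \asymp \sum_{m=1}^n m\cdot m^{2H-2} = \sum_{m=1}^n m^{2H-1} \asymp n^{2H}$ for $H>1/2$ (and is $O(\log n)$ or $O(1)$ for $H=1/2$, resp. $H<1/2$). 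Dividing by $n$ gives $O(n^{2H-1})$ — but one must be sharper: interacting the decay $|\rho_H(j-k)|\le C|j-k|^{2H-2}$ with the factor $(j\wedge k)^{2H-2}$ in $\delta$ and summing over the diagonal band more carefully should yield the stated $\psi_H(n) = n^{4H-3}$ rate (one loses $\sum |j-k|^{2H-2}$ which is $O(n^{2H-1})$ off the relevant sums, and gains from the product structure), while the off-diagonal contributions where $|j-k|$ is large are killed by summability. The third step is the clean stationary limit: $\frac{2}{n}\sum_{j,k=1}^n \rho_H(j-k)^2 = \frac{2}{n}\sum_{|m|<n}(n-|m|)\rho_H(m)^2 \to 2\sum_{m\in\Z}\rho_H(m)^2 = \sigma_H^2$ by dominated convergence, with rate $O(n^{-1}\sum_{|m|<n}|m|\rho_H(m)^2)$; when $2H-2 < -1/2$, i.e. $H<3/4$, $\rho_H(m)^2 = O(|m|^{4H-4})$ is summable against $|m|$ only up to the threshold, and one gets precisely the rate $\psi_H(n)$ again (with the critical case $H=3/4$ producing $\sum_{|m|<n}|m|\cdot|m|^{-1} \asymp n$, hence $E[V_n(X)^2]/\log n \to \frac{2}{\sum}\cdots$; matching the advertised constant $9/(16\theta^4)$ uses $\rho_{3/4}(m)\sim \frac{3}{4\theta^2}|m|^{-1/2}$, giving $2\cdot \frac{9}{16\theta^4}\cdot\frac{\log n}{\log n}$ up to the normalization — the exact constant bookkeeping is the tail asymptotics of $E[Z^{3/4}_m Z^{3/4}_0]$).

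The main obstacle is the second step: getting the \emph{sharp} power $n^{4H-3}$ rather than the crude $n^{2H-1}$ that a naive triangle-inequality bound yields. This requires using the cross-term structure honestly — the product $|\delta(j,k)|\cdot|\rho_H(j-k)|$ decays both in $(j\wedge k)$ and in $|j-k|$, so the double sum splits into a near-diagonal region (where $|j-k|$ is bounded, contributing $\sum_m m^{2H-2}$ off the small index) and a far region (summable in $|j-k|$). Tracking these two decays simultaneously, together with the $(u+v)^{2H-2}$ kernel estimates that must be pushed through from Proposition \ref{H3 subfBm}'s proof, is where the real work lies; all of it is routine but delicate, and the excerpt indicates such computations are "relegated to lemmas in the Appendix" — so in the write-up I would invoke an appendix lemma giving $|E[X_jX_k] - \rho_H(j-k)| \le C\,\varphi_H^{1/2}(j\wedge k)$ or similar, and then the remaining estimate is the elementary sum manipulation above.
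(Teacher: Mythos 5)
Your plan is sound and reaches the stated rates, but it is organized differently from the paper's Appendix. There, the argument splits $E[V_n(X)^2]$ into the diagonal part $\frac{2}{n}\sum_k(E[X_k^2])^2$ and the off-diagonal part, then expands $(E[X_kX_j])^2=(a(k,j)+A(k,j)+D(k,j))^2$ into six double sums $E_1,\dots,E_6$ via Lemma \ref{calculcov}, computes the limit of each separately, and relies on an exact cancellation among the limits of $E_1$, $E_2$ and $E_4$ to recover $2\sum_{i\geq 1}\rho_H(i)^2$. Your route --- writing $E[X_jX_k]=\rho_H(j-k)+\delta(j,k)$, bounding $\delta$, and using $a^2-b^2=(a-b)(a+b)$ against the stationary Riemann sum $\frac{2}{n}\sum_{|m|<n}(n-|m|)\rho_H(m)^2$ --- rests on exactly the same ingredients (the covariance identity of Lemma \ref{calculcov}, the identity $A(t,s)=\rho_H(t-s)-e^{-\theta(t-s)}\rho_H(0)-e^{-\theta s}\rho_H(t)+e^{-\theta t}\rho_H(s)$ from the proof of Proposition \ref{H3 subfBm}, the bias rate of Proposition \ref{rate cv of 2 mmt sbfOU}, and $|D(k,j)|\leqslant C(jk)^{H-1}$), but it avoids the cancellation bookkeeping and makes the limit transparently equal to $2\sum_{m\in\mathbb{Z}}\rho_H(m)^2$. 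That is also what the Appendix actually proves ($2f_H(\theta)^2$ from the diagonal plus $4\sum_{i\geq 1}\rho_H(i)^2$ off the diagonal), and it differs from the displayed (\ref{sigmaH}) by a factor of $2$ on the $\rho_H(0)^2$ term --- a discrepancy your cleaner bookkeeping exposes.

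Three points need tightening. First, $\delta(j,k)$ is not exponentially small for $H\leqslant 1/2$: the $(u+v)^{2H-2}$ kernel still contributes $|D(k,j)|\leqslant C(jk)^{H-1}$, which is only polynomial; the rate $n^{-1}$ still comes out there because $\sum_j j^{2H-2}$ converges. Second, as you yourself anticipate, the collapsed bound $|\delta(j,k)|\leqslant C(j\wedge k)^{2H-2}$ is genuinely insufficient for the $|\delta|^2$ contribution --- it only yields $\frac{1}{n}\sum_{j<k}j^{4H-4}=O(1)$, which does not even vanish --- so you must retain the product form $(jk)^{H-1}$ for the $D$-piece, giving $\frac{1}{n}\bigl(\sum_j j^{2H-2}\bigr)^2\leqslant C\psi_H(n)$ exactly as in the paper's bounds for $E_5$ and $E_6$, and retain the exponential decay in $|j-k|$ (respectively in $j$) for the bias and boundary pieces. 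Third, the $H=3/4$ constant comes from $\rho_{3/4}(m)\sim H(2H-1)\theta^{-2}m^{-1/2}=\frac{3}{8\theta^2}m^{-1/2}$, not $\frac{3}{4\theta^2}m^{-1/2}$; with that correction $\frac{2}{n}\sum_{j,k}\rho_H(j-k)^2\sim 4\cdot\frac{9}{64\theta^4}\log n=\frac{9}{16\theta^4}\log n$, matching the statement.
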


\begin{proof}
See Appendix.
\end{proof}

Propositions \ref{rate cv of 2 mmt sbfOU} and \ref{rate cv qua-varia sbfOU}
lead to the assumptions $(\mathcal{H}1)$ and $(\mathcal{H}2)$. Then,
applying Theorem \ref{Thm consistency} we obtain the strong consistency of
the estimator $\hat{f}_{n}(X)$ of the form (\ref{estimator quadratic}).

\begin{theorem}
Let $0<H<1$. Then we have
\begin{equation*}
\hat{f}_{n}(X)\longrightarrow f_{H}(\theta )=\frac{H\Gamma (2H)}{\theta ^{2H}%
}
\end{equation*}%
almost surely as $n\longrightarrow \infty $.
\end{theorem}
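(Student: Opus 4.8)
The plan is to imitate the proof of Theorem~\ref{Thm consistency}: write $\hat f_n(X) = V_n(X)/\sqrt n + A_n(X)$ and handle the two pieces separately. For the mean piece, Proposition~\ref{rate cv of 2 mmt sbfOU} gives $|E[X_n^2]-f_H(\theta)|\leqslant Cn^{2H-2}$, which tends to $0$ since $H<1$; hence $(\mathcal{H}1)$ holds with $f=f_H(\theta)$, and since the arithmetic means of a convergent sequence converge to the same limit, $A_n(X)\to f_H(\theta)$. It remains only to show $V_n(X)/\sqrt n\to 0$ almost surely.

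I cannot invoke $(\mathcal{H}2)$ directly, since Proposition~\ref{rate cv qua-varia sbfOU} shows that $v_n(X)=E[V_n(X)^2]$ is bounded only for $H<3/4$ (it grows like $\log n$ at $H=3/4$, and polynomially for $H\in(3/4,1)$). Instead I will estimate the second moment directly via the covariance bound of Proposition~\ref{H3 subfBm}. Using $v_n(X)=\frac2n\sum_{j,k=1}^n(E[X_jX_k])^2$ together with $|E[X_jX_k]|\leqslant C$ for all $j,k$ (boundedness of $E[X_j^2]$ and Cauchy--Schwarz) and $|E[X_jX_k]|\leqslant C|j-k|^{2H-2}$ for $j\neq k$ (Proposition~\ref{H3 subfBm}, absorbing the finitely many small lags into the constant), one obtains
\begin{equation*}
\left\Vert \frac{V_n(X)}{\sqrt n}\right\Vert_{L^2(\Omega)}^2=\frac{v_n(X)}{n}\leqslant \frac{C}{n}\left(1+\sum_{m=1}^{n-1}m^{4H-4}\right).
\end{equation*}
Since $4H-4<0$, the sum is $O(1)$ for $H<3/4$, $O(\log n)$ for $H=3/4$, and $O(n^{4H-3})$ for $H\in(3/4,1)$; in every regime $\Vert V_n(X)/\sqrt n\Vert_{L^2(\Omega)}\leqslant C_\varepsilon\, n^{-\gamma}$ for any $\gamma<\min\{1/2,\,2-2H\}$, which is strictly positive because $H<1$.

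To conclude, note that $V_n(X)/\sqrt n$ lies in the second Wiener chaos, so hypercontractivity~(\ref{hypercontractivity}) upgrades this to $\Vert V_n(X)/\sqrt n\Vert_{L^p(\Omega)}\leqslant c_{p,\varepsilon}\,n^{-\gamma}$ for all $p\geqslant 1$. Lemma~\ref{Borel-Cantelli} then yields, for each small $\varepsilon'>0$, an almost surely finite random variable $\eta_{\varepsilon'}$ with $|V_n(X)/\sqrt n|\leqslant \eta_{\varepsilon'}n^{-\gamma+\varepsilon'}$ almost surely; taking $\varepsilon'<\gamma$ gives $V_n(X)/\sqrt n\to 0$ a.s., whence $\hat f_n(X)\to f_H(\theta)$ a.s. The only step that goes beyond a literal appeal to Theorem~\ref{Thm consistency} --- and the one deserving care --- is the case $H\geqslant 3/4$, where $v_n(X)$ diverges; but since it diverges strictly slower than $n$, the ratio $v_n(X)/n$ still decays at a positive power rate, which is exactly what the Borel--Cantelli mechanism requires.
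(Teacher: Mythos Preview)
Your proof is correct, and in fact more careful than the paper's own justification. The paper simply states that Propositions~\ref{rate cv of 2 mmt sbfOU} and~\ref{rate cv qua-varia sbfOU} yield $(\mathcal{H}1)$ and $(\mathcal{H}2)$, and then invokes Theorem~\ref{Thm consistency}. But as you correctly observe, Proposition~\ref{rate cv qua-varia sbfOU} only establishes boundedness of $v_n(X)$ (hence $(\mathcal{H}2)$) for $H<3/4$; at $H=3/4$ it gives $v_n(X)\asymp\log n$, and for $H\in(3/4,1)$ it is silent. So the paper's appeal to Theorem~\ref{Thm consistency} does not literally cover the full range $0<H<1$ claimed in the statement.

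Your route---bounding $v_n(X)/n$ directly via the covariance estimate of Proposition~\ref{H3 subfBm} rather than via convergence of $v_n(X)$---closes this gap: it shows $\Vert V_n(X)/\sqrt{n}\Vert_{L^2(\Omega)}\leqslant Cn^{-\gamma}$ with $\gamma>0$ for every $H\in(0,1)$, and then the hypercontractivity/Borel--Cantelli machinery (Lemma~\ref{Borel-Cantelli}) finishes the argument exactly as in the proof of Theorem~\ref{Thm consistency}. The upshot is that what Theorem~\ref{Thm consistency} really needs is not $(\mathcal{H}2)$ itself but merely $v_n(X)=O(n^{1-\delta})$ for some $\delta>0$, and $(\mathcal{H}3)$ with $\rho(t)=O(|t|^{-\beta})$, $\beta>0$, already delivers that. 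Your argument makes this explicit.
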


Now, we will study the asymptotic normality of the estimator $\hat{f}_{n}(X)$
when $0<H\leqslant \frac{3}{4}$. Using (\ref{berry esseen particular}) with $%
\beta =2-2H$ we obtain the following result.

\begin{proposition}
\label{ratesFn sbfbm} If $0<H\leqslant \frac{3}{4}$, then
\begin{equation*}
d_{TV}(F_{n}(X),N)\leqslant C\left\{
\begin{array}{ll}
n^{-\frac{1}{2}} & \mbox{ if }H\in \left( 0,\frac{2}{3}\right) \\
~~ &  \\
n^{-\frac{1}{2}}\log (n)^{2} & \mbox{ if }H=\frac{2}{3} \\
~~ &  \\
n^{6H-\frac{9}{2}} & \mbox{ if }H\in \left( \frac{2}{3},\frac{3}{4}\right)
\\
~~ &  \\
\log (n)^{-3/2} & \mbox{ if }H=\frac{3}{4}.%
\end{array}%
\right.
\end{equation*}
\end{proposition}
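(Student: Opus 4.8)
The plan is to feed Proposition~\ref{H3 subfBm} into the general Berry--Esseen bound (\ref{berry esseen particular}) of Theorem~\ref{thm cv in law of Fn}, taking the memory exponent to be $\beta=2-2H$, and then to dispose of the prefactor $v_n^2\wedge v_n^{3/2}$ using Proposition~\ref{rate cv qua-varia sbfOU}.

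First I would record that Proposition~\ref{H3 subfBm} is precisely the statement that Hypothesis $(\mathcal{H}3)$ holds for the solution $X$ of (\ref{subFOU}), with $\rho(t)=O(|t|^{2H-2})$ for large $|t|$, i.e.\ $\rho(t)=O(|t|^{-\beta})$ with $\beta=2-2H$. Since $0<H\leqslant 3/4$ we have $\beta\geqslant 1/2$, so Theorem~\ref{thm cv in law of Fn} applies and gives
\[
d_{TV}(F_n(X),N)\leqslant \frac{C}{v_n^2\wedge v_n^{3/2}}\cdot
\begin{cases}
1 & \text{if }\beta=\tfrac12,\\
n^{3/2-3\beta} & \text{if }\beta\in(\tfrac12,\tfrac23),\\
n^{-1/2}\log(n)^2 & \text{if }\beta=\tfrac23,\\
n^{-1/2} & \text{if }\beta>\tfrac23.
\end{cases}
\]
Translating $\beta=2-2H$, the four regimes of $\beta$ correspond respectively to $H=3/4$, $H\in(2/3,3/4)$, $H=2/3$, and $H\in(0,2/3)$, and a one-line computation gives $n^{3/2-3\beta}=n^{3/2-3(2-2H)}=n^{6H-9/2}$ in the middle range. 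These are exactly the exponents appearing in the statement.

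It then remains to evaluate the factor $1/(v_n^2\wedge v_n^{3/2})$. For $0<H<3/4$, Proposition~\ref{rate cv qua-varia sbfOU} gives $v_n=E[V_n(X)^2]\to\sigma_H^2$, with $\sigma_H^2\geqslant f_H(\theta)^2>0$ from (\ref{sigmaH}), so $v_n^2\wedge v_n^{3/2}$ is bounded below by a strictly positive constant for all large $n$ and may be absorbed into $C$; this yields the first three lines. For $H=3/4$, the same proposition gives $v_n/\log n\to 9/(16\theta^4)$, so that $v_n\to\infty$ and hence $v_n^2\wedge v_n^{3/2}=v_n^{3/2}$ is of order $(\log n)^{3/2}$; since here $\beta=1/2$ the quantity multiplying $1/(v_n^2\wedge v_n^{3/2})$ is just $1$, and we obtain $d_{TV}(F_n(X),N)\leqslant C(\log n)^{-3/2}$, the last line.

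No step is genuinely delicate; the only subtlety is the endpoint $H=3/4$, where $v_n$ \emph{diverges} (logarithmically) instead of converging, so that the naive $O(1)$ control of the prefactor must be upgraded, using the sharp asymptotics $v_n\sim \tfrac{9}{16\theta^4}\log n$ from Proposition~\ref{rate cv qua-varia sbfOU}, into the vanishing bound $(\log n)^{-3/2}$. Everything else is bookkeeping of the exponents supplied by Theorem~\ref{thm cv in law of Fn}.
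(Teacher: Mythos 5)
Your proof is correct and follows essentially the same route as the paper, which simply invokes (\ref{berry esseen particular}) with $\beta=2-2H$ after noting that Proposition~\ref{H3 subfBm} supplies $(\mathcal{H}3)$ and Proposition~\ref{rate cv qua-varia sbfOU} controls $v_n$. Your treatment of the endpoint $H=3/4$, where $v_n\sim\frac{9}{16\theta^4}\log n$ diverges and the prefactor $1/(v_n^2\wedge v_n^{3/2})=1/v_n^{3/2}$ produces the $(\log n)^{-3/2}$ rate, is exactly the intended (though unstated) argument and is spelled out more carefully than in the paper itself.
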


Combining this with Propositions \ref{rate cv of 2 mmt sbfOU} and \ref{rate
cv qua-varia sbfOU} we deduce the result.

\begin{theorem}
If $0<H<3/4$, then
\begin{equation*}
d_{W}\left( \frac{\sqrt{n}}{\sigma _{H}}(\hat{f}_{n}(X)-f_{H}(\theta
)),N\right) \leqslant C\sqrt{\psi _{H}(n)}
\end{equation*}%
and if $H=3/4$, we have
\begin{equation*}
d_{W}\left( \frac{\sqrt{n}(\hat{f}_{n}(X)-f_{H}(\theta ))}{\sigma _{H}\sqrt{%
\log (n)}},N\right) \leqslant C\log (n)^{-1/2}.
\end{equation*}%
In particular, if $0<H<3/4$, then, as $n\rightarrow \infty $
\begin{equation*}
\sqrt{n}(\hat{f}_{n}(X)-f_{H}(\theta ))\overset{law}{\longrightarrow }%
\mathcal{N}(0,\sigma _{H}^{2})
\end{equation*}%
and if $H=3/4$, then, as $n\rightarrow \infty $
\begin{equation*}
\frac{\sqrt{n}(\hat{f}_{n}(X)-f_{H}(\theta ))}{\sqrt{\log (n)}}\overset{law}{%
\longrightarrow }\mathcal{N}(0,\sigma _{H}^{2}).
\end{equation*}
\end{theorem}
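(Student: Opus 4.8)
The plan is to read this statement off from the general Berry--Ess\'een bound of Theorem \ref{berry ess. theta gen. case}, applied to the sfOU sequence $X$ solving (\ref{subFOU}) with $f=f_H(\theta)$, once the three input ingredients collected in this subsection are fed in. First I would record admissibility of the hypotheses: by Proposition \ref{H3 subfBm}, $(\mathcal{H}3)$ holds with $\rho(t)=|t|^{2H-2}$, i.e. with decay exponent $\beta=2-2H$, which is $>1/2$ exactly for $H<3/4$ and equals $1/2$ at $H=3/4$; by Proposition \ref{rate cv of 2 mmt sbfOU}, $(\mathcal{H}1)$ holds and the bias $\sqrt{n}\,|A_n(X)-f_H(\theta)|$ is controlled by the rates displayed there; and by Proposition \ref{rate cv qua-varia sbfOU}, for $H<3/4$ the hypothesis $(\mathcal{H}2)$ holds with $\sigma^2=\sigma_H^2$ and $|v_n-\sigma_H^2|\leqslant C\psi_H(n)$, while at $H=3/4$ one has instead the logarithmic growth $v_n=\sigma_H^2\log n+O(1)$ with $\sigma_H^2:=9/(16\theta^4)$.

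For $0<H<3/4$ I would invoke the second (the $(\mathcal{H}2)$) conclusion of Theorem \ref{berry ess. theta gen. case} with $\beta=2-2H$, which bounds $d_W\!\big(\tfrac{\sqrt{n}}{\sigma_H}(\hat{f}_n(X)-f_H(\theta)),N\big)$ by the sum of $C\sqrt{n}\,|A_n(X)-f_H(\theta)|$, $C|v_n-\sigma_H^2|$, and the $\beta$-dependent chaos rate. The remaining work is a routine case check showing each summand is $O(\sqrt{\psi_H(n)})$. For $0<H\leqslant 1/2$ one has $\sqrt{\psi_H(n)}=n^{-1/2}$ and all three terms are $O(n^{-1/2})$; here one should note that at $H=1/2$ the factor $2H-1$ multiplying the only non-exponentially-small piece $b_H$ of the bias vanishes, so the bias is actually exponentially small and no spurious logarithm appears. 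For $1/2<H<3/4$ one has $\sqrt{\psi_H(n)}=n^{2H-3/2}$; the bias is precisely of this order, the variance rate $n^{4H-3}$ is no larger because $4H-3\leqslant 2H-3/2$ for $H\leqslant 3/4$, and translating the chaos rate through $\beta=2-2H$ gives $n^{-1/2}$ when $H<2/3$, $n^{-1/2}\log(n)^{2}$ when $H=2/3$, and $n^{6H-9/2}$ when $2/3<H<3/4$, each of which is $\leqslant Cn^{2H-3/2}$ for $H\leqslant 3/4$. This yields $d_W(\cdots)\leqslant C\sqrt{\psi_H(n)}$.

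For $H=3/4$ the hypothesis $(\mathcal{H}2)$ fails, so I would instead use the first conclusion of Theorem \ref{berry ess. theta gen. case}, now with $\beta=1/2$, namely $d_W\!\big(\sqrt{n/v_n}(\hat{f}_n(X)-f_H(\theta)),N\big)\leqslant \sqrt{n/v_n}\,|A_n(X)-f_H(\theta)|+C/(v_n^2\wedge v_n^{3/2})$; since $v_n=\sigma_H^2\log n+O(1)\to\infty$ we get $v_n^2\wedge v_n^{3/2}=v_n^{3/2}\geqslant c(\log n)^{3/2}$ eventually, and $\sqrt{n}\,|A_n(X)-f_H(\theta)|=O(n^{2H-3/2})=O(1)$ by Proposition \ref{rate cv of 2 mmt sbfOU}, so the right-hand side is $O((\log n)^{-1/2})$. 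To reach the normalization in the statement I would write $\tfrac{\sqrt{n}(\hat{f}_n(X)-f_H(\theta))}{\sigma_H\sqrt{\log n}}=d_n\,\sqrt{n/v_n}(\hat{f}_n(X)-f_H(\theta))$ with $d_n:=\sqrt{v_n/(\sigma_H^2\log n)}$, observe $|d_n-1|=O((\log n)^{-1})$ from $|v_n-\sigma_H^2\log n|=O(1)$, and compute $E\big[(\sqrt{n/v_n}(\hat{f}_n(X)-f_H(\theta)))^2\big]=1+n(A_n(X)-f_H(\theta))^2/v_n=1+O((\log n)^{-1})$, so that this variable has uniformly bounded $L^1$ norm; then the triangle inequality together with $d_W(d_nY,Y)\leqslant|d_n-1|\,E|Y|$ gives the claimed $O((\log n)^{-1/2})$ bound. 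The convergence-in-law assertions follow at once, since $\psi_H(n)\to0$ for $H<3/4$ and $(\log n)^{-1/2}\to0$, and Wasserstein convergence of the standardized variable to $N$ is equivalent to the stated convergence of $\sqrt{n}(\hat{f}_n(X)-f_H(\theta))$ to $\mathcal N(0,\sigma_H^2)$ (resp. with the extra $\sqrt{\log n}$ at $H=3/4$).

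I do not anticipate any genuine obstacle: the argument is essentially the substitution of already-established rates into Theorem \ref{berry ess. theta gen. case}. The only points requiring a little care are (i) the case $H=1/2$, where one must use the exponential rather than the $t^{-1}$ decay of the bias (legitimate because the coefficient $2H-1$ kills the single slowly-decaying term) so as not to lose a logarithm, and (ii) the passage at the endpoint $H=3/4$ from the $\sqrt{n/v_n}$-standardization produced by Theorem \ref{berry ess. theta gen. case} to the $\sqrt{\log n}$-standardization of the statement, which is handled by the short perturbation estimate indicated above.
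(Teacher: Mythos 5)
Your proposal is correct and follows the same route as the paper, which simply combines Theorem \ref{berry ess. theta gen. case} (with $\beta=2-2H$) with Propositions \ref{rate cv of 2 mmt sbfOU}, \ref{H3 subfBm}, \ref{rate cv qua-varia sbfOU} and the chaos rates of Proposition \ref{ratesFn sbfbm}. In fact you supply two details the paper glosses over --- the vanishing of the $2H-1$ coefficient at $H=1/2$, without which the stated bias bound $n^{-1/2}\log n$ would spoil the claimed rate $\sqrt{\psi_{1/2}(n)}=n^{-1/2}$, and the explicit renormalization from $\sqrt{n/v_n}$ to $\sqrt{\log n}$ at $H=3/4$ --- and both are handled correctly.
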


\subsection{Bifractional Brownian motion}

In this section we suppose that $G$ given in (\ref{GOU}) is a bifractional
Brownian motion (bifBm) $B^{H,K}$ with parameters $H\in (0,1)$ and $K\in
(0,1]$. The $B^{H,K}:=\{B_{t}^{H,K},t\geq 0\}$ is the mean-zero Gaussian
process with covariance function
\begin{equation*}
E(B_{s}^{H,K}B_{t}^{H,K})=\frac{1}{2^{K}}\left( \left( t^{2H}+s^{2H}\right)
^{K}-|t-s|^{2HK}\right) .
\end{equation*}%
The case $K=1$ corresponds to the fBm with Hurst parameter $H$. The process $%
B^{H,K}$ verifies,
\begin{equation*}
E\left( \left\vert B_{t}^{H,K}-B_{s}^{H,K}\right\vert ^{2}\right) \leqslant
2^{1-K}|t-s|^{2HK},
\end{equation*}%
so $B^{H,K}$ has $(HK-\varepsilon )-$Hölder continuous paths for any $%
\varepsilon \in (0,HK)$ thanks to Kolmogorov's continuity criterion.

\begin{proposition}
\label{rate cv of 2 mmt bifOU} Assume that $H\in (0,1)$ and $K\in (0,1]$.
Then we have for large $t>0$
\begin{equation*}
|E[X_{t}^{2}]-f_{H,K}(\theta )|\leqslant Ct^{2HK-2}
\end{equation*}%
where
\begin{equation}
f_{H,K}(\theta ):=2^{1-K}HK\Gamma (2HK)/\theta ^{2HK}.  \label{fHK}
\end{equation}%
Then
\begin{equation*}
\sqrt{n}|A_{n}(X)-\mu (\theta )|\leqslant C\left\{
\begin{array}{ll}
n^{-\frac{1}{2}} & \mbox{ if }0<HK<1/2 \\
~~ &  \\
n^{2HK-3/2} & \mbox{ if }HK\geq 1/2.%
\end{array}%
\right.
\end{equation*}%
In particular, if $HK<3/4$, the hypothesis $(\mathcal{H}4)$ holds.
\end{proposition}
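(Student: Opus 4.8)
The plan is to reduce the computation of $E[X_t^2]$ to that of the stationary fractional Ornstein--Uhlenbeck process of Hurst index $HK$, up to a controllable correction, by invoking the classical decomposition of bifractional Brownian motion due to Lei and Nualart. Recall that, with $c_K:=2^{(1-K)/2}$ and $c'_K:=\sqrt{2^{-K}K/\Gamma(1-K)}$, there is a centered Gaussian process $U=\{U_t,\ t\geq 0\}$ with $C^{\infty}$ trajectories on $(0,\infty)$, namely $U_t=\int_0^{\infty}(1-e^{-r t^{2H}})\,r^{-(1+K)/2}\,dW_r$ for an auxiliary Brownian motion $W$, such that $U$ and $B^{H,K}$ are independent and $c_K B^{HK}_t=B^{H,K}_t+c'_K U_t$ as processes in law; equivalently $B^{H,K}_t\overset{d}{=}c_K B^{HK}_t-c'_K U_t$. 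Integrating $e^{\theta\cdot}$ against this identity gives $X_t\overset{d}{=}c_K\widetilde Y_t-c'_K C_t$, where $\widetilde Y_t:=e^{-\theta t}\int_0^t e^{\theta s}\,dB^{HK}_s$ is (a scaled copy of) the fractional OU process of (\ref{stationary-FOU}) with $\alpha=HK$, before passing to its stationary limit, and $C_t:=e^{-\theta t}\int_0^t e^{\theta s}\,dU_s$. Since $X_t$ is measurable with respect to $B^{H,K}$ while $C_t$ is measurable with respect to $W$, the two are independent, the cross term vanishes, and rearranging gives $E[X_t^2]=c_K^2 E[\widetilde Y_t^2]-(c'_K)^2 E[C_t^2]$.

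For the first term one writes $\widetilde Y_t=Z^{HK}_t-e^{-\theta t}Z^{HK}_0$, so by stationarity of $Z^{HK}$, $E[\widetilde Y_t^2]=\rho_{HK}(0)-2e^{-\theta t}\rho_{HK}(t)+e^{-2\theta t}\rho_{HK}(0)$ with $\rho_{HK}(k)=E[Z^{HK}_kZ^{HK}_0]$. Using the stationary variance $\rho_{HK}(0)=HK\,\Gamma(2HK)/\theta^{2HK}$ (the identity $\rho_H(0)=f_H(\theta)$ recorded below (\ref{sigmaH}), here with Hurst index $HK$) together with the decay bound $|\rho_{HK}(t)|\leq C\,t^{2HK-2}$ for large $t$ --- the very fact already invoked in the proof of Proposition \ref{H3 subfBm} (see \cite{CKM}, \cite{EV}) --- and observing $c_K^2\rho_{HK}(0)=2^{1-K}HK\,\Gamma(2HK)/\theta^{2HK}=f_{H,K}(\theta)$, we obtain $c_K^2 E[\widetilde Y_t^2]=f_{H,K}(\theta)+O(e^{-\theta t}t^{2HK-2})$, so this piece converges to $f_{H,K}(\theta)$ at exponential speed. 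For the second term, differentiating the representation of $U$ shows that $U$ is a.s. differentiable on $(0,\infty)$ with $E[U'_sU'_u]=4H^2\Gamma(2-K)\,s^{2H-1}u^{2H-1}(s^{2H}+u^{2H})^{-(2-K)}$, a kernel that is positively homogeneous of degree $2HK-2$, whence
\[
E[C_t^2]=4H^2\Gamma(2-K)\,e^{-2\theta t}\int_0^t\int_0^t e^{\theta(s+u)}\,\frac{s^{2H-1}u^{2H-1}}{(s^{2H}+u^{2H})^{2-K}}\,ds\,du.
\]
Substituting $s=t-a$, $u=t-b$ turns the weight into $e^{-\theta(a+b)}$, which concentrates the integral near the corner $(t,t)$: on $\{a,b\leq t/2\}$ both arguments are of order $t$ and the homogeneous kernel is dominated by $C\,t^{2HK-2}$, while on $\{a>t/2\}\cup\{b>t/2\}$ --- the only region where the kernel's singularity at a vanishing argument can occur, which it does when $H<1/2$ --- one integration in the singular variable (using $\int_0^t s^{2H-1}(s^{2H}+c)^{-(2-K)}\,ds\leq C\,c^{K-1}$, hence $\int_0^t E[U'_sU'_u]\,ds\leq C\,u^{2HK-1}$) leaves an exponentially small remainder. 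Hence $E[C_t^2]\leq C\,t^{2HK-2}$ for large $t$. (When $K=1$, $B^{H,K}$ is fBm of Hurst $H$, $U$ degenerates, this correction is absent, and the estimate reduces to the exponential bound on the first term.)

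Combining the two bounds gives $|E[X_t^2]-f_{H,K}(\theta)|\leq C\,t^{2HK-2}$ for all large $t$, which is the first assertion. The bound on $\sqrt n\,|A_n(X)-f_{H,K}(\theta)|$ then follows by the triangle inequality: $\sqrt n\,|A_n(X)-f_{H,K}(\theta)|\leq n^{-1/2}\sum_{i=1}^n|E[X_i^2]-f_{H,K}(\theta)|\leq C\,n^{-1/2}(1+\sum_{i=1}^n i^{2HK-2})$, the finitely many small indices being absorbed into the constant. If $HK<1/2$ the series converges and one gets $O(n^{-1/2})$; if $HK\geq 1/2$ then $\sum_{i=1}^n i^{2HK-2}\leq C\,n^{2HK-1}$ (with an additional $\log n$ exactly at $HK=1/2$, as in Proposition \ref{rate cv of 2 mmt sbfOU}), which yields $O(n^{2HK-3/2})$. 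Since $n^{-1/2}\to 0$ and $n^{2HK-3/2}\to 0$ if and only if $HK<3/4$, hypothesis $(\mathcal{H}4)$ holds precisely in that range.

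The main obstacle is the estimate $E[C_t^2]\leq C\,t^{2HK-2}$: one must argue that the homogeneous kernel $s^{2H-1}u^{2H-1}(s^{2H}+u^{2H})^{-(2-K)}$, which genuinely blows up near $\{s=0\}\cup\{u=0\}$ when $H<1/2$, causes no trouble because the factor $e^{\theta(s+u)-2\theta t}$ drives all of the double integral's mass to the corner $(t,t)$, where the kernel is comparable to $t^{2HK-2}$. A more computational alternative, which avoids the bifBm decomposition and parallels the proof of Proposition \ref{rate cv of 2 mmt sbfOU} (and is presumably what is meant by the reference to \cite{EEO}), is to integrate $e^{\theta\cdot}\otimes e^{\theta\cdot}$ by parts directly against the covariance $2^{-K}[(s^{2H}+t^{2H})^K-|t-s|^{2HK}]$: the $|t-s|^{2HK}$ part, together with the boundary terms, converges exponentially to $f_{H,K}(\theta)$, while the mixed second derivative of $(s^{2H}+t^{2H})^K$, namely $4H^2K(K-1)(s^{2H}+t^{2H})^{K-2}s^{2H-1}t^{2H-1}$, produces exactly the $t^{2HK-2}$ correction; the accounting is heavier but self-contained.
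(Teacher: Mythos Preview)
Your proof is correct and takes a genuinely different route from the paper's. The paper proceeds by direct computation: from \cite{EEO} one has the decomposition
\[
E[X_t^2]=\Delta_{H,K}(t)+2^{1-K}\theta I_{2HK}(t)-2^{-K}\theta^2 J_{2HK}(t),
\]
where $I_{2HK}$ and $J_{2HK}$ are the same incomplete-Gamma type integrals as in the subfractional case (with $2H$ replaced by $2HK$), and $\Delta_{H,K}$ collects the boundary terms coming from the mixed part $(s^{2H}+r^{2H})^K$ of the bifBm covariance. The $I$ and $J$ pieces are then shown to converge to their limits at exponential speed exactly as in Proposition~\ref{rate cv of 2 mmt sbfOU}, while $\Delta_{H,K}$ is bounded by $Ct^{2HK-2}$ via the elementary inequality $x^{2H}+y^{2H}\geq 2(xy)^{H}$ and a splitting of the resulting integral at $t/2$. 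This is precisely the ``more computational alternative'' you sketch in your last paragraph.

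Your primary argument instead invokes the Lei--Nualart decomposition $c_K B^{HK}=B^{H,K}+c'_K U$ (with $B^{H,K}\perp U$), so that $c_K^2 E[\widetilde Y_t^2]=E[X_t^2]+(c'_K)^2E[C_t^2]$ and the problem reduces to the known stationary fOU variance (with Hurst index $HK$) plus the bound $E[C_t^2]\leq C t^{2HK-2}$. This is more structural: it explains transparently why $f_{H,K}(\theta)=2^{1-K}f_{HK}(\theta)$, and it recycles the fOU facts already cited in the paper (the value of $\rho_{HK}(0)$ and the decay of $\rho_{HK}(t)$). The cost is the external input of the Lei--Nualart representation and the need to control the singularity of $E[U'_sU'_u]$ near the axes when $H<1/2$, which you handle correctly by localizing near $(t,t)$ and using the one-variable bound $\int_0^t E[U'_sU'_u]\,ds\leq C u^{2HK-1}$. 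The paper's approach is more elementary and self-contained; yours is shorter once the decomposition is in hand, and makes the link to the pure fBm case explicit.
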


\begin{proof}
From \cite{EEO} and the fact that $X_{t}=\e^{-\theta t}\int_{0}^{t}\e%
^{\theta s}dB_{s}^{H,K},\quad t\geq 0$, we can write
\begin{equation*}
E\left[ X_{t}^{2}\right] =\Delta _{H,K}(t)+2^{1-K}\theta
I_{2HK}(t)-2^{-K}\theta ^{2}J_{2HK}(t),
\end{equation*}%
where
\begin{eqnarray*}
\Delta _{H,K}(t) &=&2^{2-K}HK\e^{-2\theta t}\int_{0}^{t}\e^{\theta
s}s^{2HK-1}ds \\
&&+2^{3-K}H^{2}K(K-1)\e^{-2\theta
t}\int_{0}^{t}\int_{0}^{s}(sr)^{2H-1}(s^{2H}+r^{2H})^{K-2}\e^{\theta r}\e%
^{\theta s}drds
\end{eqnarray*}%
and
\begin{equation*}
J_{2HK}(t)=\e^{-2\theta t}\int_{0}^{t}\int_{0}^{s}\e^{\theta s}\e^{\theta
r}(s-r)^{2HK}drds;\qquad I_{2HK}(t)=\e^{-\theta t}\int_{0}^{t}\e^{\theta
s}(t-s)^{2HK}ds.
\end{equation*}%
Hence
\begin{equation*}
|E[X_{t}^{2}]-\mu (\theta )|\leqslant |\Delta _{H}(t)|+\frac{\theta }{2^{K-1}%
}\left\vert I_{2HK}(t)-\frac{2HK\Gamma (2HK)}{\theta ^{2HK+1}}\right\vert +%
\frac{\theta ^{2}}{2^{K}}\left\vert J_{2HK}(t)-\frac{2HK\Gamma (2HK)}{\theta
^{2HK+2}}\right\vert .
\end{equation*}%
We will check that each term of the right-hand side is less than $Ct^{2HK-2}$%
. We can write%
\begin{equation*}
\Delta _{H,K}(t)=2^{2-K}HKa_{H,K}(t)+2^{3-K}H^{2}K(K-1)b_{H,K}(t).
\end{equation*}%
where
\begin{equation*}
a_{H,K}(t):=\e^{-2\theta t}\int_{0}^{t}\e^{\theta s}s^{2HK-1}ds;\qquad
b_{H,K}(t):=\e^{-2\theta
t}\int_{0}^{t}\int_{0}^{s}(sr)^{2H-1}(s^{2H}+r^{2H})^{K-2}\e^{\theta r}\e%
^{\theta s}drds.
\end{equation*}%
It is easy to prove that{}%
\begin{equation*}
a_{H,K}(t)\leqslant C\e^{-\theta t/2}.
\end{equation*}%
On the other hand, using $x^{2}+y^{2}\geq 2|xy|,\ x,y\in \mathbb{R}$, we get
\begin{eqnarray*}
b_{H,K}(t) &=&\e^{-2\theta
t}\int_{0}^{t}\int_{0}^{s}(sr)^{2H-1}(s^{2H}+r^{2H})^{K-2}\e^{\theta r}\e%
^{\theta s}drds \\
&\leqslant &\e^{-2\theta
t}2^{K-2}\int_{0}^{t}\int_{0}^{s}(sr)^{2H-1}s^{HK-1}r^{HK-1}\e^{\theta r}\e%
^{\theta s}drds \\
:= &&b_{1}(t)+b_{2}(t)
\end{eqnarray*}%
where
\begin{equation*}
b_{1}(t)=\e^{-2\theta t}\int_{0}^{\frac{t}{2}}\int_{0}^{s}(sr)^{HK-1}\e%
^{\theta r}\e^{\theta s}drds;\quad b_{2}(t)=\e^{-2\theta t}\int_{\frac{t}{2}%
}^{t}\int_{0}^{s}(sr)^{HK-1}\e^{\theta r}\e^{\theta s}drds.
\end{equation*}%
It is easy to see that
\begin{equation*}
|b_{1}(t)|\leqslant Ct^{2HK}\e^{-\theta t}\leqslant C\e^{-\frac{\theta t}{2}%
},
\end{equation*}%
and
\begin{eqnarray*}
|b_{2}(t)| &\leqslant &\left( \frac{t}{2}\right) ^{HK-1}\frac{\e^{-\theta t}%
}{\theta }\int_{0}^{t}r^{HK-1}\e^{\theta r}dr \\
&\leqslant &\left( \frac{t}{2}\right) ^{HK-1}\frac{\e^{-\theta t}}{\theta }%
\left( \int_{0}^{\frac{t}{2}}r^{HK-1}\e^{\theta r}dr+\int_{\frac{t}{2}%
}^{t}r^{HK-1}\e^{\theta r}dr\right) \\
&\leqslant &\left( \frac{t}{2}\right) ^{2HK-1}\frac{e^{-\theta t/2}}{\theta }%
+\frac{1}{\theta ^{2}}\left( \frac{t}{2}\right) ^{2HK-2} \\
&\leqslant &Ct^{2HK-2}.
\end{eqnarray*}%
We deduce that%
\begin{equation*}
\Delta _{H,K}(t)\leqslant Ct^{2HK-2}.
\end{equation*}%
Moreover, by a similar argument as in the proof of Proposition \ref{rate cv
of 2 mmt sbfOU} we have
\begin{equation*}
|I_{2HK}(t)-\frac{2HK\Gamma (2HK)}{\theta ^{2HK+1}}|\leqslant e^{-\theta
t/2}\left(\frac{2}{\theta }\right)^{2HK+1}\Gamma (2HK+1),
\end{equation*}%
and
\begin{equation*}
|J_{2HK}(t)-\frac{2HK\Gamma (2HK)}{\theta ^{2HK+2}}|\leqslant \left(\frac{2}{%
\theta }\right)^{2HK+2}\Gamma (2HK+1)\e^{-\theta t/2}
\end{equation*}%
which completes the proof.
\end{proof}

\begin{proposition}
\label{H3 bifBm} For all fixed $(H,K)\in(0,1)\times(0,1]$, with $%
HK\neq\frac12$, the hypothesis $(\mathcal{H}3)$ holds. More precisely, we
have for large $|t-s|$,
\begin{eqnarray*}
\left|E[X_{t}X_{s}]\right|\leqslant C \left\{
\begin{array}{ll}
|t-s|^{2HK-2H-1} & \mbox{ if } 0<HK < 1/2 \\
~~ &  \\
|t-s|^{2HK-2} & \mbox{ if } 1/2 < HK <1.%
\end{array}
\right.
\end{eqnarray*}
\end{proposition}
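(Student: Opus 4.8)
The plan is to follow the proof of Proposition~\ref{H3 subfBm} step by step, the only genuinely new ingredient being the control of the extra polynomial factors produced by the bifractional covariance $R_{H,K}(u,v)=2^{-K}\big((u^{2H}+v^{2H})^{K}-|u-v|^{2HK}\big)$. First I would reduce to $0<s<t$, and, starting from $X_t=e^{-\theta t}\int_0^t e^{\theta u}\,dB_u^{H,K}$, use a covariance identity of the same type as in the proof of Proposition~\ref{H3 subfBm} (an Appendix analogue of Lemma~\ref{calculcov}) to write
$$E[X_tX_s]=e^{-\theta(t-s)}E[X_s^2]+e^{-\theta(t+s)}\int_s^t e^{\theta v}\int_0^s e^{\theta u}\,\partial_u\partial_v R_{H,K}(u,v)\,du\,dv .$$
The leading term is $\le Ce^{-\theta(t-s)}$ because $E[X_s^2]$ converges by Proposition~\ref{rate cv of 2 mmt bifOU}. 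Computing the mixed partial, $\partial_u\partial_v R_{H,K}$ splits into a \emph{stationary-type} piece proportional to $|u-v|^{2HK-2}$ and a \emph{non-stationary} piece proportional to $(uv)^{2H-1}(u^{2H}+v^{2H})^{K-2}$, the latter carrying the factor $K-1$ and hence vanishing at $K=1$, where the bifBm collapses to fBm.

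For the stationary-type piece I would argue exactly as in Proposition~\ref{H3 subfBm}: introduce the stationary fractional OU process $Z^{HK}$ of~(\ref{stationary-FOU}), driven by fractional Brownian motion with Hurst parameter $HK$, rewrite the double integral over $(s,t)\times(0,s)$ as a finite linear combination of covariances $E[Z^{HK}_aZ^{HK}_b]$ plus exponentially small boundary terms, and invoke the known decay $|E[Z^{HK}_rZ^{HK}_0]|\le C|r|^{2HK-2}$ (see \cite{CKM}, or \cite{EV}). This contributes at most $C\big(|t-s|^{2HK-2}+e^{-\theta(t-s)}\big)$.

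The heart of the matter is the non-stationary piece, namely estimating
$$e^{-\theta(t+s)}\int_s^t e^{\theta v}\int_0^s e^{\theta u}(uv)^{2H-1}(u^{2H}+v^{2H})^{K-2}\,du\,dv .$$
Here I would use that $K-2<0$ and that $v>s>u$ on the domain of integration, so $(u^{2H}+v^{2H})^{K-2}\le v^{2H(K-2)}$; the double integral then factorizes as the product of the one-dimensional Laplace-type integrals $e^{-\theta s}\int_0^s e^{\theta u}u^{2H-1}\,du$ and $e^{-\theta t}\int_s^t e^{\theta v}v^{2HK-2H-1}\,dv$, whose behaviour for large argument is, respectively, of the order of $s^{2H-1}$ and of $t^{2HK-2H-1}$ up to constants. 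Multiplying the two factors and comparing the resulting power of $t$ (and of $s$) against $|t-s|^{2HK-2H-1}$ — using $t-s\le t$, and, in the subregime $t-s<s$, the equivalence $t\asymp s$ — yields the bound in the case $0<HK<1/2$; the case $1/2<HK<1$ is handled identically, the only change being the sign of the exponents $2H-1$ and $2HK-1$, which is exactly why the two regimes end up with different rates. The elementary Laplace-integral asymptotics used here would be recorded as lemmas in the Appendix.

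I expect this last step to be the main obstacle: one must watch where the mass of the iterated integral concentrates (near $u=s$, $v=t$, where the lag $v-u$ may be comparable to $t-s$), track the signs of the polynomial exponents as $H$ and $HK$ cross $1/2$, and combine the factorized estimate with the crude bounds on $t-s$ in the right order, so that the final rate comes out exactly as $|t-s|^{2HK-2H-1}$ (resp.\ $|t-s|^{2HK-2}$) rather than as a weaker over-estimate. The borderline $HK=\tfrac12$ is excluded because there the auxiliary process $Z^{HK}$ is a classical Ornstein--Uhlenbeck process with exponentially decaying covariance, so neither power-law formula is the right one, much as the Brownian value $H=\tfrac12$ had to be handled separately in Proposition~\ref{H3 subfBm}.
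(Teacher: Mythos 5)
Your proposal is correct and follows essentially the same route as the paper: the same covariance identity (Lemma \ref{calculcov}), the same splitting of $\partial_u\partial_v R_{H,K}$ into a stationary-type term handled via the stationary process $Z^{HK}$ of (\ref{stationary-FOU}) and the decay $|E[Z^{HK}_rZ^{HK}_0]|\leqslant C|r|^{2HK-2}$, and the same pointwise bound $(u^{2H}+v^{2H})^{K-2}\leqslant v^{2H(K-2)}$ for the non-stationary term. The only divergence is at the very end: for $H\geq \tfrac12$ the paper sidesteps your Laplace-integral factorization by observing that the non-stationary integrand is then dominated pointwise by $(v-u)^{2HK-2}$, so that term is absorbed into the stationary-type one, while for $H<\tfrac12$ it argues exactly as you do (boundedness of $e^{-\theta s}\int_0^s e^{\theta u}u^{2H-1}du$ plus a one-dimensional Laplace estimate).
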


\begin{proof}
Let $s<t$. Using Lemma \ref{calculcov} we get
\begin{eqnarray*}
E[X_{t}X_{s}] &=&\e^{-\theta (t-s)}E[X_{s}^{2}]-2^{2-K}K(1-K)\e^{-\theta t}\e%
^{-\theta s}\int_{s}^{t}\e^{\theta v}\int_{0}^{s}\e^{\theta
u}(uv)^{2H-1}(u^{2H}+v^{2H})^{K-2}dudv \\
&&\quad +2^{1-K}HK(2HK-1)\e^{-\theta t}\e^{-\theta s}\int_{s}^{t}\e^{\theta
v}\int_{0}^{s}\e^{\theta u}(u-v)^{2HK-2}dudv.
\end{eqnarray*}%
As in the proof of Proposition \ref{H3 subfBm} we have
\begin{equation*}
\delta _{HK}:=e^{-\theta t}\e^{-\theta s}\int_{s}^{t}\e^{\theta
v}\int_{0}^{s}\e^{\theta u}(u-v)^{2HK-2}dudv\leqslant C|t-s|^{2HK-2}.
\end{equation*}%
Set
\begin{equation*}
\lambda _{H,K}:=\e^{-\theta t}\e^{-\theta s}\int_{s}^{t}\e^{\theta
v}\int_{0}^{s}\e^{\theta u}(uv)^{2H-1}(u^{2H}+v^{2H})^{K-2}dudv.
\end{equation*}%
If $H\geq \frac{1}{2}$, we have for $0\leqslant u\leqslant v$,%
\begin{equation*}
(uv)^{2H-1}(u^{2H}+v^{2H})^{K-2}\leqslant v^{2HK-2}\leqslant (v-u)^{2HK-2}.
\end{equation*}%
Thus, if we assume that $HK\neq \frac{1}{2}$, $\lambda _{H,K}\leqslant
C\delta _{HK}\leqslant C|t-s|^{2HK-2}$.

If $H<\frac{1}{2}$, it is clear that $s\rightarrow \e^{-\theta s}\int_{0}^{s}%
\e^{\theta u}u^{2H-1}du$ is bounded. This implies
\begin{equation*}
\lambda _{H,K}\leqslant C\e^{-\theta t}\int_{s}^{t}\e^{\theta
v}v^{2H-1}v^{2HK-4H}dv\leqslant Ct^{2HK-2H-1}\leqslant C(t-s)^{2HK-2H-1}
\end{equation*}%
which finishes the proof.
\end{proof}

Using the same arguments as in Proposition \ref{rate cv qua-varia sbfOU} we
obtain the following result.

\begin{proposition}
\label{rate cv qua-varia bifOU} For all fixed $0<HK<3/4$,
\begin{equation*}
|E[V_{n}(X)^{2}]-\sigma _{H,K}^{2}|\leqslant C\psi _{HK}(n)
\end{equation*}%
where
\begin{equation}
\sigma _{H,K}^{2}:=4\sum_{i\in \mathbb{N}^{\ast }}(\rho _{H,K}(i)-(1-2^{1-K})%
\e^{-\theta i}\rho _{H,K}(0))^{2}+2^{3-2K}\rho _{H,K}(0)^{2}  \label{sigmaHK}
\end{equation}%
with $\rho _{H,K}(k):=E[Z_{k}^{HK}Z_{0}^{HK}]$, $k\in \mathbb{N}$ where $%
Z^{HK}$ is the process given in (\ref{stationary-FOU}), and $\rho _{H,K}(0)=%
\frac{HK\Gamma (2HK)}{\theta ^{2HK}}.$ In particular, the hypothesis $(%
\mathcal{H}2)$ holds.

If $HK=3/4$, we have
\begin{equation*}
\left\vert \frac{E[V_{n}(X)^{2}]}{\log (n)}-\frac{9}{16\theta ^{4}}%
\right\vert \leqslant C\log (n)^{-1}.
\end{equation*}
\end{proposition}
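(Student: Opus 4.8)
The plan is to reproduce the argument behind Proposition~\ref{rate cv qua-varia sbfOU}, with the Hurst exponent $HK$ of the bifractional setting replacing the $H$ of the sub-fractional one, the only genuinely new ingredient being the control of the extra term in the covariance of $X$ which is proportional to $K(1-K)$ and disappears when $K=1$. I would start from the elementary Gaussian identity already recorded in Section~3, $E[V_n(X)^2]=\frac2n\sum_{i,j=1}^n\big(E[X_iX_j]\big)^2$, so that everything reduces to the asymptotics of this double sum of squared covariances.

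Next I would feed in the covariance expansion obtained in the proof of Proposition~\ref{H3 bifBm}: for $i<j$,
$$E[X_iX_j]=\e^{-\theta(j-i)}E[X_i^2]+2^{1-K}HK(2HK-1)\,\delta_{HK}(j,i)-2^{2-K}K(1-K)\,\lambda_{H,K}(j,i),$$
combine it with (i) the identity obtained exactly as in the proof of Proposition~\ref{H3 subfBm}---by rewriting $\int_0^a\e^{\theta u}dB^{HK}_u$ through the two-sided stationary process $Z^{HK}$ of (\ref{stationary-FOU})---which expresses $HK(2HK-1)\delta_{HK}(j,i)$ as $\rho_{H,K}(j-i)-\e^{-\theta i}\rho_{H,K}(j)-\e^{-\theta(j-i)}\rho_{H,K}(0)+\e^{-\theta j}\rho_{H,K}(i)$, and with (ii) the bound $|E[X_i^2]-f_{H,K}(\theta)|\le Ci^{2HK-2}$ from Proposition~\ref{rate cv of 2 mmt bifOU} (recall $f_{H,K}(\theta)=2^{1-K}\rho_{H,K}(0)$ by (\ref{fHK}) and the definition of $\rho_{H,K}(0)$). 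Regrouping, this produces a decomposition $E[X_iX_j]=\gamma(|i-j|)+R_{ij}$ valid for $i\neq j$, where $\gamma$ is the stationary-equivalent covariance sequence whose values enter the definition~(\ref{sigmaHK}) of $\sigma_{H,K}^2$ (so in particular $\gamma(0)=f_{H,K}(\theta)$ on the diagonal), and $R_{ij}$ collects the boundary pieces $\e^{-\theta i}\rho_{H,K}(j)$, $\e^{-\theta j}\rho_{H,K}(i)$, $\e^{-\theta(j-i)}(E[X_i^2]-f_{H,K}(\theta))$ and the correction $\lambda_{H,K}(i\vee j,\,i\wedge j)$.

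I would then split $\frac2n\sum_{i,j}(E[X_iX_j])^2=\frac2n\sum_{i,j}\gamma(|i-j|)^2+\frac4n\sum_{i,j}\gamma(|i-j|)R_{ij}+\frac2n\sum_{i,j}R_{ij}^2$ and treat the three pieces separately. For the principal one, $\frac2n\sum_{i,j}\gamma(|i-j|)^2=2\gamma(0)^2+\frac4n\sum_{k=1}^{n-1}(n-k)\gamma(k)^2$, which converges to $\sigma_{H,K}^2$ with an error controlled by $\sum_{k\ge n}\gamma(k)^2+\frac1n\sum_{k=1}^{n-1}k\,\gamma(k)^2$; using $\rho_{H,K}(k)=O(k^{2HK-2})$ (a standard property of the stationary fractional Ornstein--Uhlenbeck covariance), hence $\gamma(k)=O(k^{2HK-2})$ and square-summability precisely because $HK<3/4$, both sums are $O(\psi_{HK}(n))$. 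For the remaining two pieces I would use the Cauchy--Schwarz inequality together with the fact that each boundary remainder carries an exponential factor ($\e^{-\theta(i\wedge j)}$ or $\e^{-\theta(j-i)}$) while the $\lambda_{H,K}$ correction decays polynomially in \emph{both} endpoints; working with $\ell^2$- rather than $\ell^1$-summability of $\rho_{H,K}$ (so the argument survives all of $HK\in(1/2,3/4)$), one gets $\frac1n\sum_{i,j}|R_{ij}|\big(|R_{ij}|+|\gamma(|i-j|)|\big)=O(\psi_{HK}(n))$. Adding the three estimates yields $|E[V_n(X)^2]-\sigma_{H,K}^2|\le C\psi_{HK}(n)$.

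At the endpoint $HK=3/4$ the same decomposition applies, except that $\gamma(k)^2\sim\frac{9}{64\theta^4}k^{-1}$ (from the exact asymptotic $\rho_{H,K}(k)\sim\frac{HK(2HK-1)}{\theta^2}k^{2HK-2}$ there), so $\frac4n\sum_{k=1}^{n-1}(n-k)\gamma(k)^2\sim\frac{9}{16\theta^4}\log n$ while all remainder contributions stay $O(1)$, which gives the stated $\log(n)^{-1}$ rate for $E[V_n(X)^2]/\log n$. The one real difficulty, and the only place where the sub-fractional argument needs more than a cosmetic change, is showing that the $K(1-K)$-correction $\lambda_{H,K}$ is negligible in $\frac1n\sum_{i,j}$: a naive bound of $\lambda_{H,K}$ by a function of $|i-j|$ alone is \emph{not} summable against the $\frac1n$ normalisation, so one genuinely needs the sharper estimates---decay in $i\wedge j$ and in $i\vee j$, with the cases $H\ge\frac12$ and $H<\frac12$ handled separately---which are exactly what the computations in the Appendix (entering via Lemma~\ref{calculcov} in the proof of Proposition~\ref{H3 bifBm}) are designed to supply.
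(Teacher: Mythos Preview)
Your proposal is correct and follows essentially the same route as the paper. The paper's proof (Lemma~\ref{variance case bifBm} in the Appendix) decomposes $E[X_iX_j]$ for $i<j$ via Lemma~\ref{calculcov} into the three pieces $a(k,j)=\e^{-\theta(k-j)}E[X_j^2]$, $\bar A(k,j)$ (the $2^{1-K}HK(2HK-1)$ part) and $\bar D(k,j)$ (the $K(1-K)$ part), then expands the square of the covariance into six cross-terms $\bar E_1,\dots,\bar E_6$ and estimates each exactly as in the sub-fractional Lemmas, with $\bar D$ handled by the bounds of Proposition~\ref{H3 bifBm}; this is precisely your plan, only organised differently. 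Your repackaging into a single ``stationary part plus remainder'' $\gamma(|i-j|)+R_{ij}$ is a cleaner bookkeeping of the same estimates: your $\gamma^2$-sum corresponds to the diagonal term together with the stationary pieces of $\bar E_1,\bar E_2,\bar E_4$, while your $\gamma R$ and $R^2$ sums absorb $\bar E_3,\bar E_5,\bar E_6$ and the boundary corrections. One small caution: the $\gamma$ that naturally drops out of your listed remainders is $\gamma(k)=2^{1-K}\rho_{H,K}(k)$, and matching this to the exact algebraic form~(\ref{sigmaHK}) requires the same cross-term bookkeeping the paper performs; be careful not to simply assert that the $\gamma R$ contribution is $O(\psi_{HK}(n))$ without tracking that those pieces are what recombine into the displayed formula.
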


Similarly as in Section 4.1 we obtain the following asymptotic behavior
results.

\begin{theorem}
Let $H,K\in (0,1)$. Then we have
\begin{equation}
\hat{f}_{n}(X)\longrightarrow f_{H,K}(\theta )=2^{1-K}HK\Gamma (2HK)/\theta
^{2HK}
\end{equation}%
almost surely as $n\longrightarrow \infty $.
\end{theorem}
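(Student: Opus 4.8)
The plan is to mirror the strong-consistency argument of Section 4.1 for the sub-fractional case: verify hypotheses $(\mathcal{H}1)$ and $(\mathcal{H}2)$ for the bifractional OU process and then invoke Theorem \ref{Thm consistency}. Concretely, I would start from the identity $\hat{f}_{n}(X)=V_{n}(X)/\sqrt{n}+A_{n}(X)$, so that it suffices to establish $A_{n}(X)\to f_{H,K}(\theta)$ and $V_{n}(X)/\sqrt{n}\to 0$ almost surely as $n\to\infty$.

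The convergence of the means is immediate: Proposition \ref{rate cv of 2 mmt bifOU} gives $E[X_{t}^{2}]\to f_{H,K}(\theta)$, which is exactly $(\mathcal{H}1)$, and convergence of Ces\`{a}ro means then yields $A_{n}(X)\to f_{H,K}(\theta)$. For the fluctuation term, when $0<HK<3/4$ Proposition \ref{rate cv qua-varia bifOU} provides $(\mathcal{H}2)$ with asymptotic variance $\sigma^{2}=\sigma_{H,K}^{2}$, which is strictly positive because it is a sum of squares plus the strictly positive term $2^{3-2K}\rho_{H,K}(0)^{2}$. Hence $\big(E[(V_{n}(X)/\sqrt{n})^{2}]\big)^{1/2}=\sqrt{v_{n}(X)/n}\leqslant C/\sqrt{n}$, the hypercontractivity bound (\ref{hypercontractivity}) upgrades this to a bound of the form $c_{p}\,n^{-1/2}$ in every $L^{p}(\Omega)$, and Lemma \ref{Borel-Cantelli} then delivers $V_{n}(X)/\sqrt{n}\to 0$ almost surely. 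This is precisely the content of Theorem \ref{Thm consistency}.

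The only step needing extra care, and which I expect to be the main (if modest) obstacle, is that the statement allows $HK\in[3/4,1)$, where $(\mathcal{H}2)$ as formulated no longer holds. In this regime I would argue directly, controlling the growth of $v_{n}(X)=\frac{2}{n}\sum_{j,k=1}^{n}(E[X_{j}X_{k}])^{2}$: for $HK=3/4$ the bound $E[V_{n}(X)^{2}]\leqslant C\log n$ recorded in Proposition \ref{rate cv qua-varia bifOU} applies, while for $3/4<HK<1$ the covariance estimate $|E[X_{t}X_{s}]|\leqslant C|t-s|^{2HK-2}$ of Proposition \ref{H3 bifBm}, summed over the diagonal and off-diagonal parts separately, gives $v_{n}(X)\leqslant C\,n^{4HK-3}$. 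Since $K<1$ forces $HK<1$ and hence $4HK-4<0$, in all cases $\big(E[(V_{n}(X)/\sqrt{n})^{2}]\big)^{1/2}\leqslant C\,n^{-\gamma}$ for some $\gamma>0$, up to a logarithmic factor at $HK=3/4$ which Lemma \ref{Borel-Cantelli} absorbs into an arbitrarily small power loss. Applying (\ref{hypercontractivity}) to pass to all $L^{p}$ norms and then Lemma \ref{Borel-Cantelli} again gives $V_{n}(X)/\sqrt{n}\to 0$ almost surely; combining this with $A_{n}(X)\to f_{H,K}(\theta)$ completes the proof.
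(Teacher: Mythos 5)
Your proof is correct and takes essentially the same route as the paper: verify $(\mathcal{H}1)$ and $(\mathcal{H}2)$ via Propositions \ref{rate cv of 2 mmt bifOU} and \ref{rate cv qua-varia bifOU}, then apply Theorem \ref{Thm consistency} (Ces\`{a}ro convergence of the means plus the $L^{2}$--hypercontractivity--Borel--Cantelli argument for $V_{n}(X)/\sqrt{n}$). Your separate treatment of $HK\in[3/4,1)$ is in fact necessary and is left implicit by the paper: the reduction to Theorem \ref{Thm consistency} literally requires $(\mathcal{H}2)$, which fails for $HK\geqslant 3/4$, and your direct bound $v_{n}(X)\leqslant Cn^{4HK-3}$ (with a $\log n$ at $HK=3/4$), which still yields $E[(V_{n}(X)/\sqrt{n})^{2}]\leqslant Cn^{4HK-4}$ with $4HK-4<0$ because $HK<1$, is exactly the patch needed to cover the full stated range $H,K\in(0,1)$.
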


\begin{theorem}
Let $HK\in (0,3/4)\setminus \{1/2\}$ and $N\sim \mathcal{N}(0,1)$, then
\begin{equation*}
d_{W}\left( \sqrt{n}(\hat{f}_{n}(X)-f_{H,K}(\theta ))/\sigma _{HK},N\right)
\leqslant C\sqrt{\psi _{HK}(n)}
\end{equation*}%
and if $HK=3/4$, we have
\begin{equation*}
d_{W}\left( \frac{\sqrt{n}(\hat{\theta}_{n}(X)-f_{H,K}(\theta ))}{\sigma
_{HK}\sqrt{\log (n)}},N\right) \leqslant C\log (n)^{-1/2}.
\end{equation*}%
In particular, if $HK\in (0,3/4)\setminus \{1/2\}$, we have as $n\rightarrow
\infty $
\begin{equation*}
\sqrt{n}(\hat{\theta}_{n}(X)-f_{H,K}(\theta ))\overset{law}{\longrightarrow }%
\mathcal{N}(0,\sigma _{H,K}^{2})
\end{equation*}%
and if $HK=3/4$, we have as $n\rightarrow \infty $
\begin{equation*}
\frac{\sqrt{n}(\hat{\theta}_{n}(X)-f_{H,K}(\theta ))}{\sqrt{\log (n)}}%
\overset{law}{\longrightarrow }\mathcal{N}(0,\sigma _{H,K}^{2}).
\end{equation*}
\end{theorem}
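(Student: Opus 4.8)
The plan is to specialize the general framework of Section~3 --- in particular Theorem~\ref{berry ess. theta gen. case} --- to the bifractional Ornstein--Uhlenbeck process, exactly as was done in Section~4.1 for the sub-fractional case. The three required inputs are already in place: Proposition~\ref{rate cv of 2 mmt bifOU} controls the bias $\sqrt{n}\,|A_{n}(X)-f_{H,K}(\theta)|$ and yields $(\mathcal{H}4)$ for $HK<3/4$; Proposition~\ref{H3 bifBm} yields $(\mathcal{H}3)$ with an explicit polynomial decay rate for the correlations whenever $HK\neq 1/2$; and Proposition~\ref{rate cv qua-varia bifOU} controls $|v_{n}-\sigma_{H,K}^{2}|$, yields $(\mathcal{H}2)$ for $HK<3/4$, and exhibits the logarithmic growth of $v_{n}$ at the critical exponent $HK=3/4$. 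Throughout, $\sigma_{HK}$ denotes $\sigma_{H,K}$ from (\ref{sigmaHK}).

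\emph{The case $HK\in(0,3/4)\setminus\{1/2\}$.} Fix such $(H,K)$ and read off from Proposition~\ref{H3 bifBm} the exponent $\beta$ in $\rho(t)=O(|t|^{-\beta})$: one has $\beta=2H+1-2HK=1+2H(1-K)\geq 1$ when $0<HK<1/2$, and $\beta=2-2HK\in(1/2,1)$ when $1/2<HK<3/4$. In either case $\beta>1/2$ and $(\mathcal{H}2)$--$(\mathcal{H}4)$ hold, so the second display of Theorem~\ref{berry ess. theta gen. case} applies and gives
\[
d_{W}\!\left(\frac{\sqrt{n}}{\sigma_{H,K}}\bigl(\hat{f}_{n}(X)-f_{H,K}(\theta)\bigr),N\right)\leq C\bigl(\sqrt{n}\,|A_{n}(X)-f_{H,K}(\theta)|+|v_{n}-\sigma_{H,K}^{2}|\bigr)+C\,T_{n},
\]
where $T_{n}=n^{-1/2}$ if $\beta>2/3$, $T_{n}=n^{-1/2}\log(n)^{2}$ if $\beta=2/3$, and $T_{n}=n^{3/2-3\beta}$ if $\beta\in(1/2,2/3)$. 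One then checks, regime by regime, that each summand is at most $C\sqrt{\psi_{HK}(n)}$: by Proposition~\ref{rate cv of 2 mmt bifOU} the bias term equals $\sqrt{\psi_{HK}(n)}$ exactly ($n^{-1/2}$ if $HK<1/2$, $n^{2HK-3/2}$ if $HK>1/2$); by Proposition~\ref{rate cv qua-varia bifOU} one has $|v_{n}-\sigma_{H,K}^{2}|\leq C\psi_{HK}(n)\leq C\sqrt{\psi_{HK}(n)}$ since $\psi_{HK}(n)\leq 1$; and for the chaos term, when $HK<1/2$ one has $T_{n}=n^{-1/2}=\sqrt{\psi_{HK}(n)}$ directly, while when $1/2<HK<3/4$, writing $\beta=2-2HK$, one has $n^{3/2-3\beta}=n^{6HK-9/2}\leq n^{2HK-3/2}$ (since $HK\leq 3/4$), $n^{-1/2}\leq n^{2HK-3/2}$ (since $HK\geq 1/2$), and $n^{-1/2}\log(n)^{2}\leq C\,n^{-1/6}=C\sqrt{\psi_{2/3}(n)}$ at $HK=2/3$. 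This yields $d_{W}\bigl(\sqrt{n}(\hat{f}_{n}(X)-f_{H,K}(\theta))/\sigma_{H,K},N\bigr)\leq C\sqrt{\psi_{HK}(n)}$; since $\psi_{HK}(n)\to 0$ and convergence in $d_{W}$ implies convergence in distribution, the CLT follows.

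\emph{The critical case $HK=3/4$.} Here $\beta=2-2HK=1/2$ and Proposition~\ref{rate cv qua-varia bifOU} gives $v_{n}\sim\frac{9}{16\theta^{4}}\log n\to\infty$; put $\sigma_{H,K}^{2}:=9/(16\theta^{4})$. The first display of Theorem~\ref{berry ess. theta gen. case} with $\beta=1/2$ (which does not require $(\mathcal{H}2)$), together with $v_{n}^{2}\wedge v_{n}^{3/2}=v_{n}^{3/2}$ once $v_{n}\geq 1$, gives
\[
d_{W}\!\left(\sqrt{\tfrac{n}{v_{n}}}\bigl(\hat{f}_{n}(X)-f_{H,K}(\theta)\bigr),N\right)\leq\sqrt{\tfrac{n}{v_{n}}}\,|A_{n}(X)-f_{H,K}(\theta)|+\frac{C}{v_{n}^{3/2}}\leq C(\log n)^{-1/2},
\]
using $\sqrt{n}\,|A_{n}(X)-f_{H,K}(\theta)|\leq C$ at $HK=3/4$ (Proposition~\ref{rate cv of 2 mmt bifOU}) and $v_{n}\geq c\log n$ for $n$ large. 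Finally, writing $\frac{\sqrt{n}(\hat{f}_{n}(X)-f_{H,K}(\theta))}{\sigma_{H,K}\sqrt{\log n}}=a_{n}Y_{n}$ with $Y_{n}:=\sqrt{n/v_{n}}(\hat{f}_{n}(X)-f_{H,K}(\theta))$ and $a_{n}:=\sqrt{v_{n}}/(\sigma_{H,K}\sqrt{\log n})$, one has $|a_{n}-1|\leq C(\log n)^{-1}$ (because $|v_{n}/\log n-\sigma_{H,K}^{2}|\leq C(\log n)^{-1}$) and $E|Y_{n}|\leq C$ (as $\mathrm{Var}(Y_{n})=1$ and $E[Y_{n}]\to 0$), so the elementary bound $d_{W}(a_{n}Y_{n},N)\leq|a_{n}-1|\,E|Y_{n}|+d_{W}(Y_{n},N)$ gives the stated $C(\log n)^{-1/2}$ bound and hence the degenerate CLT.

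The routine-but-delicate part, and the only real obstacle, is the piecewise bookkeeping in the first case: the chaos term $T_{n}$ has breakpoints at $\beta\in\{1/2,2/3\}$, that is at $HK\in\{3/4,2/3\}$, which are not the bias/variance breakpoint $HK=1/2$, so one must verify on each sub-interval of $(0,3/4)\setminus\{1/2\}$ that none of the three summands beats $\sqrt{\psi_{HK}(n)}$. The comparison is tight exactly at $HK=1/2$ (excluded, since Proposition~\ref{H3 bifBm} does not cover it) and at $HK=3/4$ (where the exponent $2HK-3/2$ in $\sqrt{\psi_{HK}(n)}$ vanishes and, moreover, $v_{n}$ diverges logarithmically --- precisely the reason that value needs the separate normalization by $\sqrt{\log n}$ used above).
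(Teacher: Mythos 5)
Your proposal is correct and takes essentially the same route as the paper, which proves this theorem only by remarking that it follows ``similarly as in Section 4.1,'' i.e.\ by feeding Propositions \ref{rate cv of 2 mmt bifOU}, \ref{H3 bifBm} and \ref{rate cv qua-varia bifOU} into Theorem \ref{berry ess. theta gen. case}. Your exponent bookkeeping (including the identification $\beta=1+2H(1-K)\geq 1$ for $HK<1/2$ and $\beta=2-2HK$ for $HK>1/2$, and the separate $\sqrt{\log n}$ renormalization at $HK=3/4$) checks out and in fact supplies more detail than the paper does.
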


\section{Appendix}

\begin{lemma}
\label{calculcov} Let $G$ be a Gaussian process which has Hölder continuous
paths of stictly positive ordre and $X$ is the solution of the equation (\ref%
{GOU}). Define $R_{G}(s,t) = E[G_{s}G_{t}]$, and assume that $\frac{\partial
R_{G} }{\partial s }(s,r)$ and $\frac{\partial^2 R_{G} }{\partial s\partial r%
}(s,r)$ exist on $(0,\infty)\times(0,\infty)$. Then for every $0 < s < t$,
we have
\begin{eqnarray}
E[X_{s}X_{t}] = \e^{- \theta(t-s)} E[X_{s}^{2}] + \e^{- \theta t} \e^{-
\theta s} \int_{s}^{t} \e^{\theta v} \int_{0}^{s} \e^{\theta u} \frac{%
\partial^2R_{G}}{\partial u\partial v} (u,v) du dv.  \label{cov of X GOU}
\end{eqnarray}
\end{lemma}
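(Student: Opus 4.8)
The plan is to reduce the computation of $E[X_sX_t]$ to the covariance of two Wiener integrals of the deterministic weight $\e^{\theta\,\cdot}$ against $G$, and then to convert that covariance into an iterated ordinary integral of $\frac{\partial^{2}R_{G}}{\partial u\partial v}$ by using the Young integration-by-parts formula (\ref{Young}) together with Fubini's theorem.

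First I would split, for $0<s<t$, using additivity of the Young integral over adjacent intervals,
\[
X_t=\e^{-\theta t}\int_0^t\e^{\theta v}dG_v=\e^{-\theta(t-s)}X_s+\e^{-\theta t}\int_s^t\e^{\theta v}dG_v ,
\]
the first summand being merely the definition of $X_s$ (namely $\e^{-\theta t}\int_0^s\e^{\theta v}dG_v=\e^{-\theta(t-s)}X_s$). Multiplying by $X_s=\e^{-\theta s}\int_0^s\e^{\theta u}dG_u$ and taking expectations,
\[
E[X_sX_t]=\e^{-\theta(t-s)}E[X_s^2]+\e^{-\theta t}\e^{-\theta s}\,E\!\left[\left(\int_0^s\e^{\theta u}dG_u\right)\!\left(\int_s^t\e^{\theta v}dG_v\right)\right].
\]
Since $\int_s^t\e^{\theta v}dG_v=\int_0^t\e^{\theta v}dG_v-\int_0^s\e^{\theta v}dG_v$, it is enough to establish, for all $a,b\geq 0$,
\[
E\!\left[\left(\int_0^a\e^{\theta u}dG_u\right)\!\left(\int_0^b\e^{\theta v}dG_v\right)\right]=\int_0^a\int_0^b\e^{\theta u}\e^{\theta v}\frac{\partial^{2}R_{G}}{\partial u\partial v}(u,v)\,du\,dv ,
\]
and then subtract the instance $b=s$ from the instance $b=t$; since $\int_0^t-\int_0^s=\int_s^t$, this produces precisely the second term of (\ref{cov of X GOU}).

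To prove the last identity I would apply the Young formula (\ref{Young}): as $v\mapsto\e^{\theta v}$ is Lipschitz and $G$ is a.s. $\gamma$-H\"older for some $\gamma>0$, the paths are regular enough for (\ref{Young}) to give
\[
\int_0^a\e^{\theta u}dG_u=\e^{\theta a}G_a-G_0-\theta\int_0^a\e^{\theta u}G_u\,du ,
\]
and similarly for the $b$-integral. Expanding the product of these two representations, taking expectations, and exchanging $E$ with the Lebesgue integrals by Fubini -- legitimate because $E|G_uG_v|\leq(E[G_u^2]E[G_v^2])^{1/2}$ is bounded on compact sets and $\e^{\theta\,\cdot}$ is integrable -- every resulting term becomes an ordinary integral of $E[G_\bullet G_\bullet]=R_G(\bullet,\bullet)$. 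On the other side, performing two classical integrations by parts in the double integral $\int_0^a\int_0^b\e^{\theta u}\e^{\theta v}\frac{\partial^{2}R_{G}}{\partial u\partial v}\,du\,dv$ -- first in $v$, then in $u$, using that $\partial R_G/\partial s$ and $\partial^{2}R_G/\partial s\partial r$ exist -- produces exactly the same collection of boundary and interior terms, so the two sides coincide.

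I expect the only genuinely delicate point to be the behaviour of $\frac{\partial^{2}R_{G}}{\partial u\partial v}$ near the axes $u=0$ and $v=0$: for sub- and bi-fractional Brownian motion this mixed partial is unbounded there, so one must check that the relevant integrals converge and that the fundamental theorem of calculus may be invoked on $(0,a]\times(0,b]$; in the examples of Section 4 the growth is integrable, so this is harmless. Everything else -- interval additivity of the Young integral, the decomposition of $X_t$, the split $\int_s^t=\int_0^t-\int_0^s$, and the term-by-term matching of the boundary and interior contributions -- is routine bookkeeping.
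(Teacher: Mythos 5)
Your overall strategy --- split off $\e^{-\theta(t-s)}E[X_s^2]$, rewrite the remaining Wiener integrals via the Young integration-by-parts formula (\ref{Young}), expand, take expectations, and match the result with the double integral of $\partial^2R_G/\partial u\partial v$ by classical integration by parts --- is exactly the paper's proof. However, your reduction to the identity
\begin{equation*}
E\left[\left(\int_0^a\e^{\theta u}dG_u\right)\left(\int_0^b\e^{\theta v}dG_v\right)\right]=\int_0^a\int_0^b\e^{\theta u}\e^{\theta v}\frac{\partial^{2}R_{G}}{\partial u\partial v}(u,v)\,du\,dv
\end{equation*}
on the full square, followed by the subtraction $\int_0^t-\int_0^s=\int_s^t$, introduces a genuine gap. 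The delicate point is not at the axes, as you suggest, but at the diagonal $u=v$: the rectangle $[0,a]\times[0,b]$ always contains a piece of the diagonal, and the mixed partial need not be integrable there --- for the processes this lemma is applied to, it is not. For standard Brownian motion ($R_G(s,t)=s\wedge t$) the pointwise mixed partial vanishes off the diagonal, so your identity would give $E\bigl[(\int_0^a\e^{\theta u}dW_u)^2\bigr]=0$, which is false; for the sub- and bi-fractional cases with $H\le 1/2$ (resp.\ $HK\le 1/2$) the contribution $|u-v|^{2H-2}$ fails to be locally integrable across $\{u=v\}$, so both full-square integrals you propose to subtract are ill-defined.

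The fix is to never leave the off-diagonal region, which is what the paper does: write $E[X_sX_t]=\e^{-\theta(t-s)}E[X_s^2]+\e^{-\theta t}\e^{-\theta s}E\bigl[\int_0^s\e^{\theta u}dG_u\int_s^t\e^{\theta v}dG_v\bigr]$ and apply the Young formula directly to $\int_0^s$ and $\int_s^t$; after expanding and taking expectations, the classical integrations by parts are performed over $[0,s]\times[s,t]$, where $u\le s\le v$, so the only singularity of $\partial^2R_G/\partial u\partial v$ sits at the single corner $(s,s)$ and is integrable for all relevant parameter values. With that modification (and the Fubini justification you already supply), your argument closes and coincides with the paper's.
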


\begin{proof}
Fix $s<t$. We have
\begin{eqnarray*}
E[X_{s}X_{t}] &=&E\left[ \left( \e^{-\theta s}\int_{0}^{s}\e^{\theta
u}dG_{u}\right) \left( \e^{-\theta t}\int_{0}^{t}\e^{\theta v}dG_{v}\right) %
\right] \\
&=&\e^{-\theta (t-s)}E[X_{s}^{2}]+\e^{-\theta t}\e^{-\theta s}E\left[
\int_{0}^{s}\e^{\theta u}dG_{u}\int_{s}^{t}\e^{\theta v}dG_{v}\right].
\end{eqnarray*}%
Moreover, by (\ref{Young}),
\begin{eqnarray*}
&&\e^{-\theta t}\e^{-\theta s}E\left[ \int_{0}^{s}\e^{\theta
u}dG_{u}\int_{s}^{t}\e^{\theta v}dG_{v}\right] \\
&=&\e^{-\theta t}\e^{-\theta s}E\left[ \left( \e^{\theta s}G_{s}-\theta
\int_{0}^{s}\e^{\theta u}G_{u}du\right) \left( \e^{\theta t}G_{t}-\e^{\theta
s}G_{s}^{H}-\theta \int_{s}^{t}\e^{\theta v}G_{v}dv\right) \right] \\
&=&\e^{-\theta t}\e^{-\theta s}\left[ \e^{\theta (t+s)}R_{G}(s,t)-\e%
^{2\theta s}R_{G}(s,s)-\theta \e^{\theta s}\int_{s}^{t}\e^{\theta
v}R_{G}(s,v)dv-\theta \e^{\theta t}\int_{0}^{s}\e^{\theta
u}R_{G}(u,t)du\right. \\
&&\left. +\theta \e^{\theta s}\int_{0}^{s}\e^{\theta u}R_{G}(u,s)du+\theta
^{2}\int_{s}^{t}\int_{0}^{s}\e^{\theta u}\e^{\theta v}R_{G}(u,v)dudv\right] .
\end{eqnarray*}%
Applying again (\ref{Young}) several times we get the last term of (\ref{cov
of X GOU}). Thus the desired result is obtained.
\end{proof}

\begin{lemma}
Let $X$ be the solution of (\ref{subFOU}), and let $\sigma_H^{2}$ be the
constant defined in Proposition \ref{rate cv qua-varia sbfOU}. If $0 < H <
3/4 $, we have

\begin{eqnarray*}
| E[V_{n}(X)^{2}] - \sigma_H^{2} | \leqslant C \psi_{H}(n)
\end{eqnarray*}
If $H = 3/4$, we have
\begin{equation*}
\left|\frac{E[V_{n}(X)^{2}]}{\log(n)}- \frac{9}{16 \theta^{4}}\right|
\leqslant C \log(n)^{-1}.
\end{equation*}
\end{lemma}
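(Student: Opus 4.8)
The plan is to work directly from the identity $E[V_{n}(X)^{2}]=\frac{2}{n}\sum_{j,k=1}^{n}\left(E[X_{j}X_{k}]\right)^{2}$ recorded in Section 3, and to quantify how far the covariance $E[X_{j}X_{k}]$ of the sub-fractional Ornstein-Uhlenbeck process is from the covariance $\rho _{H}$ of the stationary process $Z^{H}$ of (\ref{stationary-FOU}). Repeating the computation in the proof of Proposition \ref{H3 subfBm} (apply Lemma \ref{calculcov} with $R_{G}=R_{S^{H}}$, use $\frac{\partial ^{2}}{\partial u\,\partial v}R_{S^{H}}(u,v)=H(2H-1)\bigl((v-u)^{2H-2}-(u+v)^{2H-2}\bigr)$ for $0<u<v$, and recognize via (\ref{stationary-FOU}) and $\e^{-\theta t}\int_{0}^{t}\e^{\theta u}\,dB_{u}^{H}=Z_{t}^{H}-\e^{-\theta t}Z_{0}^{H}$ that the $(v-u)^{2H-2}$ part reconstructs the covariance of the Ornstein-Uhlenbeck process driven by fractional Brownian motion and started at $0$), one gets, for integers $j,k$,
\[
E[X_{j}X_{k}]=\rho _{H}(j-k)+r_{j,k},
\]
where the remainder $r_{j,k}$ is the sum of the ``initial value'' terms $-\e^{-\theta j}\rho _{H}(k)-\e^{-\theta k}\rho _{H}(j)+\e^{-\theta (j+k)}\rho _{H}(0)$, an $\e^{-\theta |j-k|}$-weighted term coming from replacing $E[X_{j\wedge k}^{2}]$ by $\rho _{H}(0)$, and the genuinely sub-fractional correction $C_{j,k}:=H(2H-1)\e^{-\theta (j+k)}\int_{0}^{j}\!\int_{0}^{k}\e^{\theta (u+v)}(u+v)^{2H-2}\,du\,dv$. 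Using $\rho _{H}(0)<\infty $, the bound $|\rho _{H}(i)|\leqslant C|i|^{2H-2}$ of \cite{CKM} (or \cite{EV}), the estimate $|E[X_{t}^{2}]-\rho _{H}(0)|\leqslant Ct^{2H-2}$ of Proposition \ref{rate cv of 2 mmt sbfOU}, and, for $C_{j,k}$, the manipulations already performed in the proofs of Propositions \ref{rate cv of 2 mmt sbfOU} and \ref{H3 subfBm} (bound $(u+v)^{2H-2}\leqslant v^{2H-2}$, then integrate the exponentials), one arrives at the master estimate $|r_{j,k}|\leqslant C\bigl(\e^{-\theta |j-k|}(j\wedge k)^{2H-2}+(j+k)^{2H-2}\bigr)$.

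The heart of the argument is then the expansion
\[
E[V_{n}(X)^{2}]=\frac{2}{n}\sum_{j,k=1}^{n}\rho _{H}(j-k)^{2}+\frac{4}{n}\sum_{j,k=1}^{n}\rho _{H}(j-k)\,r_{j,k}+\frac{2}{n}\sum_{j,k=1}^{n}r_{j,k}^{2}.
\]
For the leading term I would rewrite $\frac{2}{n}\sum_{j,k}\rho _{H}(j-k)^{2}=2\sum_{|i|<n}\rho _{H}(i)^{2}-\frac{2}{n}\sum_{|i|<n}|i|\,\rho _{H}(i)^{2}$. When $0<H<3/4$ the sequence $(\rho _{H}(i))_{i\in \Z}$ is square summable (because $2(2H-2)<-1$), so the first sum converges, and this convergence (with the diagonal $j=k$, where $E[X_{j}^{2}]\to f_{H}(\theta )=\rho _{H}(0)$, kept separate) is exactly the statement $E[V_{n}(X)^{2}]\to \sigma _{H}^{2}$ of Proposition \ref{rate cv qua-varia sbfOU}; meanwhile both the tail $\sum_{|i|\geqslant n}\rho _{H}(i)^{2}$ and the Kronecker remainder $\frac{1}{n}\sum_{|i|<n}|i|\rho _{H}(i)^{2}$ are $O(\psi _{H}(n))$, which follows upon inserting $\rho _{H}(i)^{2}\leqslant C|i|^{4H-4}$ and separating $H<1/2$ (where the two sums converge, giving $O(n^{-1})$) from $1/2\leqslant H<3/4$ (where they grow like $n^{4H-2}$, giving $O(n^{4H-3})$). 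When $H=3/4$ the sum $\sum_{|i|<n}\rho _{H}(i)^{2}$ diverges logarithmically, and one instead feeds in the sharp two-term asymptotics $\rho _{3/4}(i)=\frac{3}{8\theta ^{2}}|i|^{-1/2}+O(|i|^{-5/2})$ (the leading constant being $H(2H-1)\theta ^{-2}$ at $H=3/4$), which produces $\frac{2}{n}\sum_{j,k}\rho _{3/4}(j-k)^{2}=\frac{9}{16\theta ^{4}}\log n+O(1)$.

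The remaining two sums are error terms, controlled with the master estimate. For the part of $r_{j,k}$ bounded by $\e^{-\theta |j-k|}(j\wedge k)^{2H-2}$, the exponential factor makes $\sum_{j,k}|\rho _{H}(j-k)|\e^{-\theta |j-k|}(j\wedge k)^{2H-2}$ of order $\sum_{m<n}m^{2H-2}$, i.e.\ $O(1)$ for $H<1/2$ and $O(n^{2H-1})$ for $H\geqslant 1/2$, so after dividing by $n$ this piece is $O(\psi _{H}(n))$. The delicate part is the one bounded by $(j+k)^{2H-2}$, coming from $C_{j,k}$, which --- in contrast to everything in the stationary fractional-Gaussian-noise setting of \cite{BBNP} --- does not decay in $|j-k|$ at all, only in $j+k$; here I would change variables to $i=j-k$ and $m=j+k$, estimate $\sum_{\{(j,k):\,j-k=i\}}(j+k)^{2H-2}\leqslant C(n^{2H-1}\vee 1)$ and $\sum_{|i|<n}|\rho _{H}(i)|\leqslant C(n^{2H-1}\vee 1)$, and obtain exactly $O(n^{4H-3})$ for $1/2\leqslant H<3/4$ and $O(n^{-1})$ for $H<1/2$. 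The quadratic sum $\frac{2}{n}\sum r_{j,k}^{2}$ is handled in the same way, squaring only improving the exponents, the worst contribution being $\frac{2}{n}\sum_{j,k}(j+k)^{4H-4}=O(\psi _{H}(n))$. For $H=3/4$ all of these error pieces are $O(1)$, so dividing by $\log n$ turns them into $O(\log (n)^{-1})$, which is the claimed rate.

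The main obstacle, and the source of most of the bookkeeping (the ``elbow grease'' alluded to in the introduction), is precisely the sub-fractional correction $C_{j,k}$: because it decays in $j+k$ but is flat in $j-k$, extracting the sharp rate $\psi _{H}(n)$ rather than a crude bound of order $n^{2H-1}$ forces one to pair its $(j+k)$-decay against the $|j-k|$-decay of $\rho _{H}$ (for $H<1/2$) or against the controlled growth of $\sum_{|i|<n}|\rho _{H}(i)|$ (for $H\geqslant 1/2$) through the substitution $(j,k)\mapsto (j-k,j+k)$, all while tracking which sub-sum dominates in the ranges $H<1/2$, $H=1/2$ (where $C_{j,k}\equiv 0$ and everything is exponentially small in $\min(j,k)$), $1/2<H<3/4$, and the critical $H=3/4$ (which additionally needs the precise constant in the asymptotics of $\rho _{3/4}$). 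The bifractional version, Proposition \ref{rate cv qua-varia bifOU}, follows the identical template with $(u+v)^{2H-2}$ replaced by the mixed kernel $(uv)^{2H-1}(u^{2H}+v^{2H})^{K-2}$, which has already been estimated in the proof of Proposition \ref{H3 bifBm}.
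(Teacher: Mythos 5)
Your argument is correct and rests on the same ingredients as the paper's proof: Lemma \ref{calculcov} applied to $R_{S^H}$, the identification of the $(v-u)^{2H-2}$ part of the mixed second derivative with the covariance $\rho_H$ of the stationary process $Z^H$ of (\ref{stationary-FOU}), the decay $|\rho_H(i)|\leqslant C|i|^{2H-2}$ from \cite{CKM}, and the bias estimate of Proposition \ref{rate cv of 2 mmt sbfOU}. Where you genuinely differ is in the organization of the double sum. The paper writes $E[X_kX_j]=a(k,j)+A(k,j)+D(k,j)$ and expands the square into six sums $E_1,\dots,E_6$; three of these ($E_1$, $E_2$, $E_4$) have nonzero limits involving $\sum_i e^{-\theta i}\rho_H(i)$ and $\sum_i e^{-2\theta i}$, and one must check that these cancel exactly to leave $2\sum_{i\geqslant 1}\rho_H(i)^2$. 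You instead peel off $\rho_H(j-k)$ at the level of the covariance itself, so the limit comes from a single Ces\`aro/Kronecker computation on $\frac{2}{n}\sum_{j,k}\rho_H(j-k)^2$ and everything else is an error term dispatched by the one master bound $|r_{j,k}|\leqslant C\bigl(e^{-\theta|j-k|}(j\wedge k)^{2H-2}+(j+k)^{2H-2}\bigr)$; the cancellations the paper must track become automatic. Your bound $|D(k,j)|\leqslant C(j+k)^{2H-2}$ is slightly sharper than the paper's $(jk)^{H-1}$, and your $(i,m)=(j-k,j+k)$ substitution replaces the paper's direct summation of $(kj)^{H-1}$ against $\sum_j j^{2H-2}$; both routes produce the same rates $\varphi_H(n)$ and $\psi_H(n)$, and your $H=3/4$ computation with the two-term asymptotics of $\rho_{3/4}$ recovers the constant $9/(16\theta^4)$ correctly. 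Two small remarks: the signs in your ``initial value'' block do not quite match what Lemma \ref{calculcov} actually yields (for $j<k$ the correct combination is $-e^{-\theta j}\rho_H(k)+e^{-\theta k}\rho_H(j)$, with the term $-e^{-\theta(k-j)}\rho_H(0)$ absorbed into the replacement of $E[X_{j\wedge k}^2]$ by $\rho_H(0)$), but since only absolute values enter your master bound this is harmless; and your leading term converges to $2\rho_H(0)^2+2\sum_{i\neq 0}\rho_H(i)^2$, which is also what the paper's own diagonal lemma delivers and which the reader should reconcile with the coefficient of $\rho_H(0)^2$ in the displayed definition (\ref{sigmaH}).
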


\begin{proof}
We have $V_{n}(X) = I_{2}(f_{n,2})$, with $f_{n,2} = \frac{1}{\sqrt{n}}
\sum_{i = 1}^{n} f_{i}^{\otimes 2} $. Then
\begin{eqnarray*}
E[V_{n}(X)^{2}] &=& \frac{2}{n} \sum_{k,l=1}^{n} (\langle f_{k},
f_{l}\rangle_{\Hf})^{2} = \frac{2}{n} \sum_{k,l=1}^{n} (E[X_{k}X_{l}])^{2} \\
&=& \frac{2}{n} \sum_{k =1}^{n} (E[X_{k}^{2}])^{2} +\frac{2}{n} \sum
\limits_{\underset{ k \neq l }{k,l=1}}^n (E[X_{k}X_{j}])^{2}.
\end{eqnarray*}
\end{proof}

We will need the following lemmas.

\begin{lemma}
Let $X$ be the solution of (\ref{subFOU}). Then,
\begin{eqnarray*}
\left|\frac{1}{n} \sum_{k =1}^{n} (E[X_{k}^{2}])^{2} -
f_H(\theta)^{2}\right| \leqslant C \varphi_{H}(n).
\end{eqnarray*}
\end{lemma}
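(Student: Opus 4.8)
The plan is to derive the bound entirely from the pointwise estimate $|E[X_{t}^{2}]-f_{H}(\theta)|\leq Ct^{2H-2}$ proved in Proposition \ref{rate cv of 2 mmt sbfOU}. First I would write, for each $k\geq 1$,
\[
\left(E[X_{k}^{2}]\right)^{2}-f_{H}(\theta)^{2}=\left(E[X_{k}^{2}]-f_{H}(\theta)\right)\left(E[X_{k}^{2}]+f_{H}(\theta)\right).
\]
Since $2H-2<0$, the estimate of Proposition \ref{rate cv of 2 mmt sbfOU} gives $E[X_{k}^{2}]\leq f_{H}(\theta)+Ck^{2H-2}\leq f_{H}(\theta)+C$ for every $k\geq 1$, so the second factor is bounded by a constant uniformly in $k$. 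Combining this with the bound on the first factor yields $\left|\left(E[X_{k}^{2}]\right)^{2}-f_{H}(\theta)^{2}\right|\leq Ck^{2H-2}$, and hence
\[
\left|\frac{1}{n}\sum_{k=1}^{n}\left(E[X_{k}^{2}]\right)^{2}-f_{H}(\theta)^{2}\right|\leq\frac{1}{n}\sum_{k=1}^{n}\left|\left(E[X_{k}^{2}]\right)^{2}-f_{H}(\theta)^{2}\right|\leq\frac{C}{n}\sum_{k=1}^{n}k^{2H-2}.
\]

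It then remains to bound $\frac{1}{n}\sum_{k=1}^{n}k^{2H-2}$ and to check that it is $O(\varphi_{H}(n))$. When $0<H<1/2$ we have $2H-2<-1$, so $\sum_{k\geq 1}k^{2H-2}<\infty$ and the left-hand side is $O(n^{-1})=O(\varphi_{H}(n))$. When $1/2<H<1$ we have $2H-2\in(-1,0)$, and comparison with $\int_{0}^{n}x^{2H-2}\,dx$ gives $\sum_{k=1}^{n}k^{2H-2}\leq C n^{2H-1}$, whence $\frac{1}{n}\sum_{k=1}^{n}k^{2H-2}\leq Cn^{2H-2}=C\varphi_{H}(n)$. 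This disposes of every $H\neq 1/2$.

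The single delicate point is $H=1/2$, where $k^{2H-2}=k^{-1}$ and the naive bound only gives $\frac{1}{n}\sum_{k=1}^{n}k^{-1}=O(n^{-1}\log n)$, which exceeds $\varphi_{1/2}(n)=n^{-1}$ by a logarithmic factor. I expect this to be the main (indeed the only) obstacle, and I would resolve it by revisiting the proof of Proposition \ref{rate cv of 2 mmt sbfOU}: when $H=1/2$ the factor $2H-1$ multiplying $b_{H}(t)$ in $\Delta_{H}(t)$ vanishes, and the remaining pieces (the term $a_{H}(t)$ together with the $I_{2H}$ and $J_{2H}$ corrections) were all shown there to be $O(e^{-\theta t/2})$; consequently $|E[X_{k}^{2}]-f_{1/2}(\theta)|\leq Ce^{-\theta k/2}$. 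Feeding this sharper bound into the argument above gives $\left|\frac{1}{n}\sum_{k=1}^{n}(E[X_{k}^{2}])^{2}-f_{1/2}(\theta)^{2}\right|\leq\frac{C}{n}\sum_{k=1}^{n}e^{-\theta k/2}\leq Cn^{-1}=C\varphi_{1/2}(n)$, which completes the proof. (Equivalently, at $H=1/2$ the process is the standard Ornstein-Uhlenbeck process and $E[X_{k}^{2}]-f_{1/2}(\theta)=-\frac{1}{2\theta}e^{-2\theta k}$ can be computed directly, with the same effect.)
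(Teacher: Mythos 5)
Your proof is correct and follows essentially the same route as the paper: factor $(E[X_k^2])^2 - f_H(\theta)^2 = (E[X_k^2]-f_H(\theta))(E[X_k^2]+f_H(\theta))$, bound the second factor by a constant via Proposition \ref{rate cv of 2 mmt sbfOU}, and estimate $\frac{1}{n}\sum_{k=1}^n k^{2H-2}$. Your treatment of $H=1/2$ is in fact more careful than the paper's, which simply writes $\frac{C}{n}\sum_{k=1}^n k^{2H-2}\leqslant C\varphi_H(n)$ even though at $H=1/2$ the left-hand side is of order $n^{-1}\log n$ while $\varphi_{1/2}(n)=n^{-1}$; your observation that $|E[X_k^2]-f_{1/2}(\theta)|$ decays exponentially in that case (the standard Ornstein--Uhlenbeck situation) is the right way to close that small gap, consistent with the paper's implicit practice of treating $H=1/2$ separately elsewhere.
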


\begin{proof}
By Proposition \ref{rate cv of 2 mmt sbfOU} we deduce that
\begin{eqnarray*}
\left|\frac{1}{n} \sum_{k =1}^{n} (E[X_{k}^{2}])^{2} -
f_H(\theta)^{2}\right| & \leqslant& \frac{1}{n} \sum_{k =1}^{n}
|E[X_{k}^{2}] - f_H(\theta)| (E[X_{k}^{2}]+ f_H(\theta)) \\
& \leqslant& \frac{C}{n} \sum_{k =1}^{n} |E[X_{k}^{2}] - f_H(\theta)|
\leqslant \frac{C}{n} \sum_{k=1}^{n} k^{2H-2} \\
& \leqslant& C \varphi_{H}(n).
\end{eqnarray*}
\end{proof}

\begin{lemma}
Let $X$ be the solution of (\ref{subFOU}). If $0 < H< 3/4$, we have
\begin{eqnarray*}
J(n):= \left|\frac{1}{n} \sum \limits_{\underset{ k \neq j }{j,k=1}}^n
(E[X_{k}X_{j}])^{2} - 2 \sum_{i \in \mathbb{N}^{*}} \rho_H(i)^{2}\right|
\leqslant C \psi_{H}(n).
\end{eqnarray*}
\end{lemma}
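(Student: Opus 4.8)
The plan is to compare $E[X_jX_k]$ for $1\le j<k$ with the stationary fOU covariance $\rho_H(k-j)$, to show that the correction is small in an averaged sense, and then to reduce matters to the elementary identity
\[
\frac1n\sum_{\substack{1\le j,k\le n\\ j\neq k}}\rho_H(|k-j|)^2=2\sum_{i=1}^{n-1}\Big(1-\frac in\Big)\rho_H(i)^2 .
\]
Since $\rho_H(i)=O(i^{2H-2})$ (and $=O(e^{-\theta i})$ when $H=\tfrac12$), the series $\sum_{i\ge1}\rho_H(i)^2$ converges precisely because $H<\tfrac34$, and this identity already shows that the ``main part'' converges to $2\sum_{i\ge1}\rho_H(i)^2$ at the right speed: the discrepancy is at most $2\sum_{i\ge n}\rho_H(i)^2+\frac2n\sum_{i<n}i\,\rho_H(i)^2$, which elementary power-sum estimates bound by $C\psi_H(n)$ (with the $H=\tfrac12$ case immediate from the exponential decay of $\rho_{1/2}$).

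The first substantial step is to upgrade Proposition~\ref{H3 subfBm} to a \emph{signed} decomposition. For $H\neq\tfrac12$, substituting $\partial^2 R_{S^H}/\partial u\partial v=H(2H-1)\big((v-u)^{2H-2}-(u+v)^{2H-2}\big)$ into Lemma~\ref{calculcov}, and rewriting the $(v-u)^{2H-2}$-integral in terms of the stationary process $Z^H$ of (\ref{stationary-FOU}) exactly as in the proof of Proposition~\ref{H3 subfBm}, one obtains for integers $1\le j<k$
\[
E[X_jX_k]=\rho_H(k-j)+e_{j,k},\qquad e_{j,k}=e^{-\theta(k-j)}\big(E[X_j^2]-f_H(\theta)\big)-e^{-\theta j}\rho_H(k)+e^{-\theta k}\rho_H(j)-P_{j,k},
\]
where $P_{j,k}$ denotes the $(u+v)^{2H-2}$-integral. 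Bounding the first bracket by $Cj^{2H-2}$ through Proposition~\ref{rate cv of 2 mmt sbfOU}, using $|\rho_H(m)|\le Cm^{2H-2}$, and estimating $P_{j,k}$ by a Laplace-type argument (in each variable the integrand concentrates near the upper endpoint, which gives $|P_{j,k}|\le C\max(j,k)^{2H-2}$), I obtain the pointwise bound
\[
|e_{j,k}|\le C\big(e^{-\theta(k-j)}j^{2H-2}+e^{-\theta j}k^{2H-2}+e^{-\theta k}+k^{2H-2}\big),\qquad 1\le j<k .
\]

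The second step is to expand, using the symmetry of $E[X_jX_k]$ and of $\rho_H$,
\[
\frac1n\sum_{\substack{1\le j,k\le n\\ j\neq k}}(E[X_jX_k])^2=\frac2n\sum_{1\le j<k\le n}\big(\rho_H(k-j)^2+2\rho_H(k-j)\,e_{j,k}+e_{j,k}^2\big),
\]
handle the first sum by the identity above, and control the cross-sum and the $e_{j,k}^2$-sum by inserting the pointwise bound on $e_{j,k}$ together with $|\rho_H(k-j)|\le C(k-j)^{2H-2}$. Each resulting term is a double sum of a product of powers of $j$, $k$, $k-j$ and $j+k$, possibly times a geometric factor; one sums the geometric factor over one index and uses $\sum_{m=1}^{n}m^{a}\asymp n^{a+1}$ for $a>-1$ (a logarithm at $a=-1$, an $O(1)$ for $a<-1$) for the remaining power, then divides by $n$ and compares with $\psi_H(n)$. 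The case $H=\tfrac12$ is handled directly and more easily: $\rho_{1/2}$ and $e_{j,k}$ then decay exponentially, so only the ``main part'' estimate of the first paragraph is needed, and it gives the rate $O(1/n)=O(\psi_{1/2}(n))$.

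I expect the main obstacle to be two interlocking sharpness issues. First, one must avoid a wasteful bound on $e_{j,k}$: replacing $e^{-\theta j}\rho_H(k)$ by $Ce^{-\theta j}$ and $P_{j,k}$ by $C\min(j,k)^{2H-2}$ produces a term of order $\frac1n\sum_{1\le j<k\le n}j^{4H-4}=O(1)$, which does not even tend to $0$; it is essential to keep the decaying factor $\rho_H(k)=O(k^{2H-2})$ multiplying $e^{-\theta j}$, and to bound $P_{j,k}$ by $\max(j,k)^{2H-2}$ rather than $\min(j,k)^{2H-2}$. Second, in the range $\tfrac12\le H<\tfrac34$ the whole estimate is borderline, because $\sum_i\rho_H(i)^2$ is only just summable: the tail $\sum_{i\ge n}\rho_H(i)^2$ and the leading piece $\frac1n\sum_{1\le j<k\le n}\max(j,k)^{4H-4}$ of the $e_{j,k}^2$-sum (both of order $\frac1n\sum_{k=1}^{n}k^{4H-3}\asymp n^{4H-3}$) saturate the claimed rate $\psi_H(n)=n^{4H-3}$ with no slack, so the power bookkeeping in the last step must be carried out carefully rather than by crude majorization.
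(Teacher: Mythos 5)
Your proposal is correct, and it rests on the same essential ingredients as the paper's proof: Lemma \ref{calculcov}, the rewriting of the $(v-u)^{2H-2}$ double integral in terms of the stationary process $Z^H$ of (\ref{stationary-FOU}) exactly as in the proof of Proposition \ref{H3 subfBm}, a Laplace-type pointwise bound on the $(u+v)^{2H-2}$ integral, and power-sum bookkeeping against $\psi_H(n)$. The difference is organizational, but it is a real and arguably cleaner reorganization: the paper squares the three-term decomposition $E[X_jX_k]=a(k,j)+A(k,j)+D(k,j)$ to obtain six double sums $E_1,\dots,E_6$, of which $E_1$, $E_2$ and $E_4$ converge to \emph{nonzero} constants that must each be identified and then seen to cancel down to $2\sum_{i\geq 1}\rho_H(i)^2$; your decomposition $E[X_jX_k]=\rho_H(k-j)+e_{j,k}$ performs that cancellation up front (your $e_{j,k}$ is exactly $a(k,j)-e^{-\theta(k-j)}\rho_H(0)-e^{-\theta j}\rho_H(k)+e^{-\theta k}\rho_H(j)+D(k,j)$, using $\rho_H(0)=f_H(\theta)$), so the limiting constant is visible immediately and only vanishing error sums remain. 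Your bound $|P_{j,k}|\leqslant C\max(j,k)^{2H-2}$ is in fact slightly sharper than the paper's $|D(k,j)|\leqslant C(jk)^{H-1}$; both suffice, and your warning that a $\min(j,k)^{2H-2}$ bound would leave a non-vanishing $\frac{1}{n}\sum_{j<k}j^{4H-4}=O(1)$ is exactly right. The two borderline terms you flag, of order $n^{4H-3}$ for $H>1/2$, are precisely the ones that saturate the rate in the paper's estimates as well; and you cover the case $H=1/2$, which the paper's own proof sets aside after the opening line ``Suppose that $H\neq 1/2$.''
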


\begin{proof}
Suppose that $H \neq 1/2$. Using (\ref{cov of X GOU}), we have
\begin{eqnarray*}
\frac{1}{n} \sum_{k,j =1}^{n} (E[X_{k}X_{j}])^{2} & = E_{1}(n)+ E_{2}(n)+
E_{3}(n)+ E_{4}(n)+ E_{5}(n)+ E_{6}(n)
\end{eqnarray*}
where
\begin{equation*}
E_{1}(n) := \frac{2}{n} \sum_{j =1}^{n-1} \sum_{k = j+1}^{n}
a(k,j)^{2},\quad E_{2}(n) := \frac{4}{n}\sum_{j =1}^{n-1} \sum_{k = j+1}^{n}
a(k,j) A(k,j)
\end{equation*}

\begin{equation*}
E_{3}(n) := \frac{4}{n} \sum_{j =1}^{n-1} \sum_{k = j+1}^{n} a(k,j)
D(k,j),\quad E_{4}(n) := \frac{2}{n} \sum_{j =1}^{n-1} \sum_{k = j+1}^{n}
A(k,j)^{2}
\end{equation*}
\begin{equation*}
E_{5}(n) := \frac{4}{n} \sum_{j =1}^{n-1} \sum_{k = j+1}^{n}
A(k,j)D(k,j),\quad E_{6}(n) := \frac{2}{n} \sum_{j =1}^{n-1} \sum_{k =
j+1}^{n} D(k,j)^{2}
\end{equation*}
with for every $s\leqslant t$
\begin{equation*}
a(s,t) := \e^{- \theta (t-s)} E[X_{s}^{2}],
\end{equation*}
\begin{equation*}
A(t,s) := H(2H-1) \e^{- \theta t} \e^{- \theta s} \int_{s}^{t} \e^{\theta v}
\int_{0}^{s} \e^{\theta u} (u-v)^{2H-2} du dv,
\end{equation*}
\begin{equation*}
D(t,s) := -H(2H-1) \e^{- \theta t} \e^{- \theta s} \int_{s}^{t} \e^{\theta
v} \int_{0}^{s} \e^{\theta u} (u+v)^{2H-2} du dv.
\end{equation*}
Then, we can write
\begin{eqnarray*}
J(n) &\leqslant& |E_{1}(n) - \frac{2 f_H(\theta)^{2}\e^{-2 \theta}}{1- \e%
^{-2 \theta}}| + \left|E_{2}(n) - 4 f_H(\theta)\left( \sum_{i =1}^{\infty} \e%
^{- \theta i} \rho(i) - \frac{ f_H(\theta)e^{-2 \theta}}{1- \e^{- 2\theta}}%
\right)\right| + |E_{3}(n)| \\
&& + \left|E_{4}(n) - 2 \sum_{i =1}^{\infty} \left(\rho(i)-f_H(\theta)\e^{-
\theta i}\right)^{2}\right|+ |E_{5}(n)| + |E_{6}(n)|.
\end{eqnarray*}
On the other hand,
\begin{eqnarray}
\left|E_{1}(n) - \frac{2 f_H(\theta)^{2}\e^{-2 \theta}}{1- \e^{-2 \theta}}%
\right| &\leqslant& \frac{2}{n} \sum_{j=1}^{n-1} \sum_{k = j+1}^{n} \e^{- 2
\theta (k-j)} \left([E(X_j^2)]^{2}- f_H(\theta)^{2}\right)  \notag \\
&& + \left|\frac{2}{n} \sum_{j= 1}^{n-1}\sum_{k= j+1}^{n} \e^{- 2
\theta(k-j)} f_H(\theta)^{2} - \frac{2 f_H(\theta)^{2}\e^{-2 \theta}}{1- \e%
^{-2 \theta}}\right|  \notag \\
&\leqslant&\frac{2 C}{n} \sum_{j=1}^{n-1} \sum_{k = j+1}^{n} \e^{- 2 \theta
(k-j)} j^{2H-2} +2 f_H(\theta)^{2} \left(\sum_{i=n}^{\infty} e^{- 2 \theta
i} + \frac{1}{n} \sum_{i=1}^{n-1} i \e^{- 2 \theta i}\right)  \notag \\
& \leqslant& C\left[\frac{1}{n} \sum_{j=1}^{n-1} j^{2H-2} \sum _{i=1}^{n-j}
e^{- 2 \theta i}+ e^{- 2 \theta n} + n^{-1}\right]  \notag \\
&\leqslant& C \varphi_{H}(n).  \label{E_1}
\end{eqnarray}
Furthermore,
\begin{eqnarray*}
E_{2}(n) & = & \frac{4}{n} \sum_{j=1}^{n-1} \sum_{k = j+1}^{n} \e^{- \theta
(k-j)} E(X_j^2) [ E[Z_{k-j}Z_{0}] - \e^{- \theta(k-j)} E[Z_{0}^{2}] - \e^{-
\theta j} E[Z_{0}Z_{k}] +\e^{- \theta k} E[Z_{0}Z_{j}]] \\
& =& \frac{4}{n} \sum_{j=1}^{n-1} \sum_{k = j+1}^{n} \e^{- \theta (k-j)}
E(X_j^2) \rho(k-j) - \frac{4 \rho(0)}{n} \sum_{j=1}^{n-1} \sum_{k = j+1}^{n} %
\e^{- 2 \theta(k-j)} E(X_j^2) \\
& & - \frac{4}{n} \sum_{j=1}^{n-1} \e^{- \theta j} E(X_j^2) \sum_{k =
j+1}^{n} \e^{- \theta(k-j)} \rho(k)+ \frac{4}{n} \sum_{j=1}^{n-1}
\rho(j)E(X_j^2) \sum_{k = j+1}^{n} \e^{- \theta(k-j)} \e^{- \theta k} \\
&:=&E_{2}^{1}(n)-E_{2}^{2}(n)-E_{2}^{3}(n)+E_{2}^{4}(n).
\end{eqnarray*}
For $E_{2}^{1}(n)$, we have
\begin{eqnarray*}
\left|E_{2}^{1}(n)- 4 f_H(\theta)\sum_{i \in \mathbb{N}^{*}} \e^{- \theta i}
\rho(i)\right| &\leqslant& \frac{4}{n} \sum_{j=1}^{n-1} \sum_{k = j+1}^{n} \e%
^{- \theta (k-j)} |E(X_j^2)- f_H(\theta)| |\rho(k-j)| \\
&& + 4|f_H(\theta)| \left|\frac{1}{n}\sum_{j=1}^{n-1} \sum_{k = j+1}^{n} \e%
^{- \theta (k-j)}\rho(k-j) - \sum_{i=1}^{\infty} \e^{- \theta i}
\rho(i)\right| \\
&\leqslant& C\left(n^{2H-2}+n^{-1}\right)
\end{eqnarray*}
because
\begin{eqnarray*}
\left|\frac{1}{n}\sum_{j=1}^{n-1} \sum_{k = j+1}^{n} \e^{- \theta
(k-j)}\rho(k-j) - \sum_{i=1}^{\infty} \e^{- \theta i} \rho(i)\right| & =&
\left|\sum_{i=n}^{\infty} \e^{- \theta i} \rho(i) - \frac{1}{n} \sum_{i =
1}^{n-1} i\e^{- \theta i} \rho(i)\right| \\
& \leqslant & \sum_{i= n}^{\infty} \e^{- \theta i}|\rho(i)| + \frac{1}{n}
\sum_{i = 1}^{n-1} i |\rho(i)|\e^{- \theta i} \\
& \leqslant & C\left(n^{2H-2}+n^{-1}\right)
\end{eqnarray*}
and by using a similar argument as in above, we also have
\begin{eqnarray*}
\frac{4}{n} \sum_{j=1}^{n-1} \sum_{k = j+1}^{n} \e^{- \theta (k-j)}
|E(X_j^2)- f_H(\theta)| |\rho(k-j)| &\leqslant&
C\left(n^{2H-2}+n^{-1}\right).
\end{eqnarray*}
By straightforward calculus, we also have
\begin{eqnarray*}
|E_{2}^{2}(n)- 4 f_H(\theta)\sum_{i =1}^{\infty} \e^{-2 \theta i} |
\leqslant C \varphi_{H}(n)
\end{eqnarray*}
For $E_{2}^{3}(n)$, we have
\begin{eqnarray*}
|E_{2}^{3}(n)| & \leqslant& \frac{4}{n} \sum_{j=1}^{n-1} \e^{- \theta j}
|E(X_j^2)-f_H(\theta)| \sum_{k = j+1}^{n} \e^{- \theta(k-j)} |\rho(k)| +
\frac{4 |f_H(\theta)|}{n} \sum_{j=1}^{n-1} \e^{- \theta j}\sum_{k = j+1}^{n} %
\e^{- \theta(k-j)} |\rho(k)| \\
& \leqslant& C n^{-1}.
\end{eqnarray*}
For $E_{2}^{4}(n)$, since
\begin{eqnarray*}
\frac{4}{n} \sum_{j =1}^{n-1} \sum_{k = j+1}^{n} \e^{- \theta(k-j)}
|\rho(j)||E(X_j^2)-f_H(\theta)| \e^{- \theta k} \leqslant \frac{C}{n}
\sum_{j = 1}^{n-1} j^{4H-4} \leqslant \frac{C}{n},
\end{eqnarray*}
and
\begin{eqnarray*}
\frac{4}{n}\sum_{j=1}^{n-1} |\rho(j)| \sum_{k = j+1}^{n} \e^{- \theta(k-j)} %
\e^{- \theta k} \leqslant \frac{C}{n} \sum_{j =1}^{n-1} j^{2H-2} \leqslant C
\varphi_{H}(n)
\end{eqnarray*}
we get
\begin{eqnarray*}
|E_{2}^{4}(n)| \leqslant C \varphi_{H}(n).
\end{eqnarray*}
Thus, we conclude
\begin{eqnarray}
\left|E_{2}(n) - 4 f_H(\theta)\left( \sum_{i =1}^{\infty} \e^{- \theta i}
\rho(i) - \frac{ f_H(\theta)e^{-2 \theta}}{1- \e^{- 2\theta}}\right)\right|
\leqslant C \varphi_{H}(n).  \label{E_2}
\end{eqnarray}
For $E_{3}(n)$, since for every $\forall k > j$, $|D(k,j)| \leqslant C
(jk)^{H-1}$, we can write
\begin{eqnarray}
|E_{3}(n)| & \leqslant& \frac{C}{n} \sum_{j=1}^{n-1} \sum_{k = j+1}^{n} \e%
^{- \theta (k-j)} |E(X_j^2)| (kj)^{H-1}  \notag \\
& \leqslant& \frac{C}{n}\left[\sum_{j=1}^{n-1} \sum_{k = j+1}^{n} \e^{-
\theta (k-j)} |E(X_j^2)- f_H(\theta)| (kj)^{H-1} + f_H(\theta)
\sum_{j=1}^{n-1} \sum_{k = j+1}^{n} \e^{- \theta (k-j)} (kj)^{H-1}\right]
\notag \\
& \leqslant& \frac{C}{n}\left[\sum_{j =1}^{n} j^{4H-4}+\sum_{j = 1}^{n-1}
j^{2H-2}\right]  \notag \\
& \leqslant& C\varphi_{H}(n).  \label{E_3}
\end{eqnarray}
For $E_{4}(n)$, we first calculate $A(k,j)^{2}$. We have
\begin{eqnarray*}
A(k,j)^{2} & =& \rho(k-j)^{2} - 2 \rho(k-j)\e^{- \theta (k-j)} \rho(0) +
\rho(0)^{2} \e^{- 2 \theta (k-j)} + \e^{- 2 \theta k} \rho(j)^{2} \\
&& - 2 \e^{- \theta(k+j)} \rho(j) \rho(k) + \e^{- 2 \theta j} \rho(k)^{2} +
2 \e^{- \theta k} \rho(k-j)\rho(j) - 2 \e^{- \theta j} \rho(k) \rho(k-j) \\
&&- 2 \e^{- \theta (k-j)} \rho(0) \e^{- \theta k} \rho(j) + 2 \e^{-
\theta(k-j)} \rho(0) e^{- \theta j} \rho(k).
\end{eqnarray*}
Hence, we need to study the rate of convergence of the following terms. We
have
\begin{eqnarray*}
\left|\frac{2}{n} \sum_{j = 1}^{n-1} \sum_{k = j+1}^{n} \rho(k-j)^{2} - 2
\sum_{i =1}^{\infty} \rho(i)^{2} \right| &\leqslant& 2 \sum_{i = n}^{\infty}
i^{4H-4} + \frac{2}{n} \sum_{i = 1}^{n -1}i^{4H-3} \\
&\leqslant& C \psi_{H}(n),
\end{eqnarray*}
\begin{eqnarray*}
4 \rho(0) \left|\frac{1}{n} \sum_{j= 1}^{n -1} \sum_{k = j+1}^{n} \e^{-
\theta(k-j)} \rho(k-j) - \sum_{i =1}^{\infty} \e^{- \theta i} \rho(i)\right|
&\leqslant& 4 \rho(0) \sum_{j =n}^{\infty} j^{2H-2} \e^{- \theta j} \\
&& + \frac{4 \rho(0)}{n} \sum_{ i = 1}^{n-1} \e^{- \theta i} i^{2H-1} \\
&\leqslant& C \varphi_{H}(n)
\end{eqnarray*}
and
\begin{eqnarray*}
2 \rho(0)^{2} \left|\frac{1}{n} \sum_{j =1}^{n-1}\sum_{k=j+1}^{n} \e^{-
2\theta(k-j)} - \sum_{i =1}^{\infty} \e^{- 2 \theta i}\right| \leqslant C
n^{-1}.
\end{eqnarray*}
Moreover, it is clear that
\begin{eqnarray*}
\frac{1}{n}\sum_{j = 1}^{n-1} \sum_{k = j+1}^{n}\left[ \e^{- 2 \theta k}
\rho(j)^{2}+\e^{- \theta (k+j)} |\rho(j)| |\rho(k)|+\e^{- 2 \theta j}
\rho(k)^{2}\right] \leqslant C n^{-1}
\end{eqnarray*}
and
\begin{eqnarray*}
\frac{1}{n} \sum_{j =1}^{n-1} \sum_{k = j+1}^{n} \e^{- \theta k} |\rho(k-j)|
|\rho(j)| \leqslant \frac{C }{n} \sum_{j=1}^{n-1} j^{2H-2} \leqslant C
\varphi_{H}(n).
\end{eqnarray*}
In addition,
\begin{eqnarray*}
\frac{4}{n} \sum_{j =1}^{n-1} \sum_{k = j+1}^{n} \e^{- \theta j} |\rho(k)|
|\rho(k-j)| & \leqslant& \frac{C}{n} \sum_{j =1}^{n-1} \e^{- \theta j}
\sum_{k = j+1}^{n} (k-j)^{4H-4} \\
& \leqslant& \frac{C}{n} \sum_{i =1}^{\infty} i^{4H-4} \sum_{j=1}^{n-1} \e%
^{- \theta j} \leqslant C n^{-1},
\end{eqnarray*}
\begin{eqnarray*}
\frac{4}{n} \sum_{j =1}^{n-1} \sum_{k = j+1}^{n} \e^{- \theta (k-j)}
|\rho(j)| \e^{- \theta k} & \leqslant& \frac{4 }{n} \sum_{j=1}^{n-1}
j^{2H-2} \sum_{k= j+1}^{n} \e^{- \theta(k-j)} \\
& \leqslant& C \varphi_{H}(n),
\end{eqnarray*}
\begin{eqnarray*}
\frac{4 \rho(0)}{n} \sum_{j =1}^{n-1} \sum_{k = j+1}^{n} \e^{-\theta(k-j)}
|\rho(k)| \e^{- \theta j} & \leqslant & \frac{4 \rho(0)}{n} \sum_{j =
1}^{n-1} \e^{- \theta j} \sum_{k = j+1}^{n} \e^{- \theta(k-j)} (k-j)^{2H-2}
\\
& \leqslant & \frac{4 \rho(0)}{n} \sum_{j=1}^{n-1} e^{- \theta j} \sum_{i
=1}^{\infty} \e^{- \theta i} \rho(i) \\
& \leqslant& C n^{-1}.
\end{eqnarray*}
Thus,
\begin{eqnarray}
\left|E_{4}(n) - 2 \sum_{i =1}^{\infty} \left(\rho(i)-f_H(\theta)\e^{-
\theta i}\right)^{2}\right| \leqslant C \psi_{H}(n).  \label{E_4}
\end{eqnarray}

For $E_{5}(n)$, we have
\begin{eqnarray}
|E_{5}(n)| &\leqslant& \frac{C}{n} \sum_{j = 1}^{n-1} \sum_{k = j+1}^{n}
(k-j)^{2H-2} (kj)^{H-1}  \notag \\
&\leqslant& \frac{C}{n} \sum_ {j =1}^{n-1} j^{H-1} \sum_{i =1}^{n-j}
i^{2H-2} (i+j)^{H-1}  \notag \\
&\leqslant& \frac{C}{n} \left(\sum_{j=1}^{n} j^{2H-2}\right)^{2} \leqslant C
\psi_{H}(n).  \label{E_5}
\end{eqnarray}
Finally, using $D(k,j)^{2} \leqslant |A(k,j)||D(k,j)|$ $\forall k > j$, we
get
\begin{eqnarray}
|E_{6}(n)| \leqslant C |E_{5}(n)|.  \label{E_6}
\end{eqnarray}
Combining (\ref{E_1}), (\ref{E_2}), (\ref{E_3}), (\ref{E_4}), (\ref{E_5})
and (\ref{E_6}) the proof is completed.
\end{proof}

Let us now study the case when $H = 3/4$.

\begin{lemma}
Let $X$ be the solution of (\ref{subFOU}). If $H = \frac{3}{4}$, we have
\begin{eqnarray}
\left|\frac{1}{n \log(n)} \sum \limits_{\underset{ k \neq j }{j,k=1}}^n
\left(E[X_{k}X_{j}]\right)^{2} - \frac{9}{32\theta^{4}}\right| \leqslant
\frac{C}{\log(n)} .
\end{eqnarray}
\end{lemma}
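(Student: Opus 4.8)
The plan is to reuse the decomposition from the proof of the preceding lemma: for $H\neq 1/2$,
\[
\frac1n\sum_{\substack{j,k=1\\ k\neq j}}^{n}(E[X_kX_j])^2=E_1(n)+E_2(n)+E_3(n)+E_4(n)+E_5(n)+E_6(n),
\]
with the same auxiliary functions $a(k,j)=\e^{-\theta(k-j)}E[X_j^2]$, $A(k,j)$ and $D(k,j)$. The only structural change at $H=3/4$ is that the exponents become $2H-2=-\tfrac12$ and $4H-4=-1$, so that the sequence $\rho_H(i)^2$ ceases to be summable; this is exactly what produces the factor $\log n$. First I would recheck that $E_1(n),E_2(n),E_3(n),E_5(n),E_6(n)$ remain $O(1)$ at $H=3/4$: for $E_1,E_2,E_3$ every summand carries the factor $\e^{-\theta(k-j)}$ coming from $a(k,j)$, so the inner sum over $k$ is bounded uniformly in $j$ and the prefactor $\tfrac1n$ leaves an $O(1)$ quantity (the bounds already displayed in the $H<3/4$ case read $O(\varphi_H(n))$, and $\varphi_{3/4}(n)=n^{-1/2}$); for $E_5,E_6$ the estimate $|E_5(n)|\le\tfrac Cn\big(\sum_{j\le n}j^{2H-2}\big)^2=\tfrac Cn\big(O(n^{1/2})\big)^2=O(1)$ does the job, and $|E_6(n)|\le C|E_5(n)|$.

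Consequently the whole $\log n$ contribution comes from $E_4(n)=\tfrac2n\sum_{j=1}^{n-1}\sum_{k=j+1}^{n}A(k,j)^2$. Using the identity $A(k,j)=\rho_H(k-j)-\e^{-\theta j}\rho_H(k)-\e^{-\theta(k-j)}\rho_H(0)+\e^{-\theta k}\rho_H(j)$ obtained in the proof of Proposition~\ref{H3 subfBm}, I would expand the square exactly as in the previous lemma and note that every resulting term other than $\rho_H(k-j)^2$ carries an exponential factor in one of the indices $j$, $k$ or $k-j$; rerunning the corresponding estimates with $2H-2=-\tfrac12$ shows each such term is $O(1)$ (when the factor is $\e^{-\theta(k-j)}$, as in $\e^{-2\theta(k-j)}\rho_H(0)^2$, where the geometric sum over $k-j$ is bounded and the $\tfrac1n$ cancels the remaining sum over $j$) or $o(1)$ (when the factor is $\e^{-\theta j}$ or $\e^{-\theta k}$, in which case the leftover sums over $\rho_H$ are at most $O(\log n)$ by Cauchy--Schwarz, killed by the $\tfrac1n$). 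Hence, after the change of index $i=k-j$, $m=n-j$,
\[
E_4(n)=\frac2n\sum_{j=1}^{n-1}\sum_{k=j+1}^{n}\rho_H(k-j)^2+O(1)=\frac2n\sum_{m=1}^{n-1}\Big(\sum_{i=1}^{m}\rho_H(i)^2\Big)+O(1).
\]

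The analytic core is then the asymptotics of $\sum_{i=1}^m\rho_H(i)^2$ at $H=3/4$. I would invoke the known expansion of the stationary fractional-OU covariance (see \cite{CKM}, or \cite{EV}), whose leading behaviour at $H=3/4$ is $\rho_H(i)=\frac{3}{8\theta^2}i^{-1/2}+O(i^{-5/2})$, hence $\rho_H(i)^2=\frac{9}{64\theta^4}i^{-1}+O(i^{-3})$ with summable remainder, so that $\sum_{i=1}^{m}\rho_H(i)^2=\frac{9}{64\theta^4}\log m+O(1)$. Combining this with Stirling's formula in the form $\sum_{m=1}^{n-1}\log m=\log((n-1)!)=n\log n+O(n)$ gives
\[
E_4(n)=\frac2n\Big(\frac{9}{64\theta^4}\,n\log n+O(n)\Big)+O(1)=\frac{9}{32\theta^4}\log n+O(1).
\]
Collecting the six terms yields $\tfrac1n\sum_{k\neq j}(E[X_kX_j])^2=\frac{9}{32\theta^4}\log n+O(1)$, and dividing by $\log n$ gives the stated estimate with $C$ absorbing the $O(1)$. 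The step requiring most care --- the main obstacle --- is the bookkeeping for $E_4$'s cross terms and the other $E_m$'s: one must verify that at the critical exponent $H=3/4$ none of the many quantities that were genuinely $o(1)$ when $H<3/4$ has silently become of order $\log n$. This works out because the only non-summable sequence in play, $\rho_H(i)^2\sim\tfrac{9}{64\theta^4}i^{-1}$, can generate a logarithm only when summed over an index that is then also summed freely a second time, producing the factor $n$ which the $\tfrac1n$ prefactor cancels down to $\log n$; every other appearance of $\rho_H$ is either damped by an exponential or has its argument tied to a single difference.
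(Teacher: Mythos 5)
Your proposal is correct and follows essentially the same route as the paper, whose own proof merely asserts that $E_{1},E_{2},E_{3},E_{5},E_{6}$ stay bounded and that $E_{4}(n)/\log (n)$ tends to $9/(32\theta ^{4})$ at rate $\log (n)^{-1}$; you supply the details the paper omits, in particular deriving the constant from the expansion $\rho _{H}(i)=\frac{3}{8\theta ^{2}}i^{-1/2}+O(i^{-5/2})$ of the stationary fOU covariance at $H=3/4$ and checking that the cross terms in $A(k,j)^{2}$ contribute only $O(1)$. No gaps.
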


\begin{proof}
Using similar argument as in above, it is straightforward to check that
\begin{eqnarray*}
|E_{1}(n)| +|E_{2}(n)| + |E_{3}(n)|+ |E_{5}(n)|+ |E_{6}(n)|\leqslant C,
\end{eqnarray*}
and
\begin{eqnarray*}
\left|\frac{1}{\log(n)}E_{4}(n)-\frac{9}{32 \theta^{4}}\right| \leqslant C
\log(n)^{-1}
\end{eqnarray*}
which completes the proof.
\end{proof}

\begin{lemma}
\label{variance case bifBm}Let $X$ be the solution of (\ref{GOU}), with the
process $G$ is a bifBm. If $0 < HK < 3/4 $, we have
\begin{eqnarray*}
\left| E[V_{n}(X)^{2}] - 4 \sum_{i =1}^{\infty} \left[\rho(i) - (1-2^{1-K})\e%
^{-\theta i} \rho(0)\right]^{2} - 2^{3-2K} \rho(0)^{2}\right| \leqslant C
\psi_{HK}(n).
\end{eqnarray*}
If $HK = 3/4$, we have
\begin{eqnarray*}
\left|\frac{E[V_{n}(X)^{2}]}{\log(n)}- \frac{9}{16 \theta^{4}}\right|
\leqslant C \log(n)^{-1}.
\end{eqnarray*}
\end{lemma}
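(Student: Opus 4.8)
The plan is to reproduce, for the bifractional process, the same chain of estimates carried out above for the sub-fractional one, replacing $\rho$ by $\rho_{H,K}$ throughout and accounting for one extra kernel. First I would start from
\[
E[V_{n}(X)^{2}]=\frac{2}{n}\sum_{k=1}^{n}\left(E[X_{k}^{2}]\right)^{2}+\frac{2}{n}\sum_{\substack{k,j=1\\k\neq j}}^{n}\left(E[X_{k}X_{j}]\right)^{2}.
\]
The diagonal sum is disposed of exactly as before: since Proposition~\ref{rate cv of 2 mmt bifOU} gives $\left|E[X_{k}^{2}]-f_{H,K}(\theta)\right|\leqslant Ck^{2HK-2}$ and $f_{H,K}(\theta)=2^{1-K}\rho_{H,K}(0)$, a telescoping estimate identical to the sub-fractional one produces $\left|\frac{1}{n}\sum_{k=1}^{n}\left(E[X_{k}^{2}]\right)^{2}-f_{H,K}(\theta)^{2}\right|\leqslant C\varphi_{HK}(n)\leqslant C\psi_{HK}(n)$, which accounts for the $2^{3-2K}\rho_{H,K}(0)^{2}$ term.

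For the off-diagonal sum I would apply Lemma~\ref{calculcov} with $G=B^{H,K}$. Differentiating the covariance yields
\[
\frac{\partial^{2}R_{B^{H,K}}}{\partial u\partial v}(u,v)=2^{1-K}HK(2HK-1)|u-v|^{2HK-2}+2^{2-K}H^{2}K(K-1)(uv)^{2H-1}(u^{2H}+v^{2H})^{K-2},
\]
so for $j<k$ I write $E[X_{j}X_{k}]=a(k,j)+A(k,j)+D(k,j)$ with $a(k,j)=\e^{-\theta(k-j)}E[X_{j}^{2}]$, $A(k,j)$ the $|u-v|^{2HK-2}$-integral and $D(k,j)$ the $(u^{2H}+v^{2H})^{K-2}$-integral. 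The term $A(k,j)$ is $2^{1-K}$ times the analogous quantity for the stationary fBm-$HK$ Ornstein-Uhlenbeck process $Z^{HK}$, so it is rewritten in terms of $\rho_{H,K}$ exactly as in the proof of Proposition~\ref{H3 subfBm}; the term $D(k,j)$ is controlled as in the proof of Proposition~\ref{H3 bifBm}, with $|D(k,j)|\leqslant C|k-j|^{2HK-2}$ when $H\geqslant\tfrac12$ and $|D(k,j)|\leqslant C|k-j|^{2HK-2H-1}$ when $H<\tfrac12$, the crucial additional fact being that $D$ also decays in each time argument separately (its kernel behaves like $|u|^{2HK-2}$ when $u\approx v$ is large).

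Expanding $\left(E[X_{j}X_{k}]\right)^{2}=a^{2}+A^{2}+D^{2}+2aA+2aD+2AD$ and summing over $j<k$ produces six quantities $E_{1}(n),\dots,E_{6}(n)$, as in the sub-fractional lemma. The four built only from $a$ and $A$ are estimated verbatim, carrying the factor $2^{1-K}$ and substituting $\rho\mapsto\rho_{H,K}$; after the same cancellations they combine into the series part of $\sigma_{H,K}^{2}$ with remainder $O(\varphi_{HK}(n))$. The three involving $D$ are the only genuinely new input; the crude bounds above only give $O(1)$ here, so one must exploit the two-argument decay of $D$ --- which makes $\sum_{j}D(i+j,j)^{2}$ converge and tend to $0$ as $i\to\infty$ --- together with the elementary estimates $\sum_{|i|<n}|i|^{4HK-4}\leqslant C$ (finite since $HK<3/4$) and the appropriate bound for $\sum_{j\leqslant n}j^{2HK-2}$, in order to conclude $E_{3}(n),E_{5}(n),E_{6}(n)=O(\psi_{HK}(n))$. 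Assembling the pieces gives $\left|E[V_{n}(X)^{2}]-\sigma_{H,K}^{2}\right|\leqslant C\psi_{HK}(n)$. When $HK=\tfrac34$ the only structural change is that $\sum_{|i|<n}\rho_{H,K}(i)^{2}$ ceases to converge: since $\rho_{H,K}(i)\sim HK(2HK-1)\theta^{-2}|i|^{2HK-2}=\tfrac{3}{8\theta^{2}}|i|^{-1/2}$ for large $|i|$, it grows logarithmically and is the unique divergent contribution (every other piece, including all the $D$-terms, remains $O(1)$), and after dividing by $\log n$ one is left with $\left|\frac{E[V_{n}(X)^{2}]}{\log n}-\frac{9}{16\theta^{4}}\right|\leqslant C(\log n)^{-1}$.

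The hard part is the bookkeeping for the $D$-terms: showing that $\tfrac1n\sum_{j<k}aD$, $\tfrac1n\sum_{j<k}AD$ and $\tfrac1n\sum_{j<k}D^{2}$ are $O(\psi_{HK}(n))$ rather than merely $O(1)$ --- or $O(\log n)$ at the endpoint $HK=\tfrac34$ --- requires the sharper two-argument decay of $D$ and must be carried out separately in the ranges $H<\tfrac12$ and $H\geqslant\tfrac12$, where the available decay exponent differs. All the remaining steps are a routine, if lengthy, transcription of the computations already performed for the sub-fractional Ornstein-Uhlenbeck process.
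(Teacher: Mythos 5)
Your proposal follows essentially the same route as the paper: the same split into diagonal and off-diagonal sums, the same application of Lemma \ref{calculcov} to write $E[X_jX_k]=a+\overline{A}+\overline{D}$ with the $|u-v|^{2HK-2}$ part expressed through the stationary fOU correlation $\rho_{H,K}$ and the $(u^{2H}+v^{2H})^{K-2}$ part treated as a remainder, and the same six-term expansion $E_1,\dots,E_6$ imported from the sub-fractional lemma. Your identification of the two-argument decay of the $\overline{D}$-kernel as the point needing genuine extra work is exactly what the paper's (equally terse) "similar arguments" step relies on, so the proposal matches the paper's proof in both structure and substance.
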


\begin{proof}
We can write
\begin{equation*}
E[V_{n}(X)^{2}] = \frac{2}{n} \sum \limits_{\underset{ k \neq l }{k,l=1}}^n
(E[X_{k}X_{j}])^{2} + \frac{2}{n} \sum_{k =1}^{n} (E[X_{k}^{2}])^{2},
\end{equation*}
where
\begin{eqnarray*}
|\frac{1}{n}\sum_{k=1}^{n}(E[X_{k}^{2}])^{2}-2^{2-2K}\rho (0)^{2}|
&\leqslant &\frac{1}{n}\sum_{k=1}^{n}|E[X_{k}^{2}]-2^{1-K}\rho
(0)|(E[X_{k}^{2}]+2^{1-K}\rho (0)) \\
&\leqslant &\frac{C}{n}\sum_{k=1}^{n}|E[X_{k}^{2}]-2^{1-K}\rho (0)| \\
&\leqslant &\frac{C}{n}\sum_{k=1}^{n}k^{2HK-2} \\
&\leqslant &C\varphi _{HK}(n).
\end{eqnarray*}%
Moreover,
\begin{equation*}
\left\vert \frac{1}{n}\sum\limits_{\underset{k\neq j}{j,k=1}%
}^{n}(E[X_{k}X_{j}])^{2}-2\sum_{i=1}^{\infty }(\rho (i)-(1-2^{1-K})\e%
^{-\theta i}\rho (0))^{2}\right\vert \leqslant C\psi _{HK}(n).
\end{equation*}%
Indeed, we have
\begin{equation*}
\frac{1}{n}\sum_{k,j=1}^{n}(E[X_{k}X_{j}])^{2}=\overline{E}_{1}(n)+\overline{%
E}_{2}(n)+\overline{E}_{3}(n)+\overline{E}_{4}(n)+\overline{E}_{5}(n)+%
\overline{E}_{6}(n)
\end{equation*}%
where
\begin{equation*}
\overline{E}_{1}(n):=\frac{2}{n}\sum_{j=1}^{n-1}\sum_{k=j+1}^{n}a(k,j)^{2},%
\quad \overline{E}_{2}(n):=\frac{4}{n}\sum_{j=1}^{n-1}\sum_{k=j+1}^{n}a(k,j)%
\overline{A}(k,j)
\end{equation*}%
\begin{equation*}
\overline{E}_{3}(n):=\frac{4}{n}\sum_{j=1}^{n-1}\sum_{k=j+1}^{n}a(k,j)%
\overline{D}(k,j),\quad \overline{E}_{4}(n):=\frac{2}{n}\sum_{j=1}^{n-1}%
\sum_{k=j+1}^{n}\overline{A}(k,j)^{2}
\end{equation*}%
\begin{equation*}
\overline{E}_{5}(n):=\frac{4}{n}\sum_{j=1}^{n-1}\sum_{k=j+1}^{n}\overline{A}%
(k,j)\overline{D}(k,j),\quad \overline{E}_{6}(n):=\frac{2}{n}%
\sum_{j=1}^{n-1}\sum_{k=j+1}^{n}\overline{D}(k,j)^{2}
\end{equation*}%
with for $0<s<t$,
\begin{eqnarray*}
a(s,t):= &&\e^{-\theta (t-s)}E[X_{s}^{2}], \\
\overline{A}(t,s):= &&2^{1-K}HK(2HK-1)\e^{-\theta t}\e^{-\theta
s}\int_{s}^{t}\e^{\theta v}\int_{0}^{s}\e^{\theta u}(u-v)^{2HK-2}dudv, \\
\overline{D}(t,s):= &&2^{2-K}K(K-1)\e^{-\theta t}\e^{-\theta
s}\int_{s}^{t}\int_{0}^{s}\e^{\theta u}\e^{\theta
v}(uv)^{2H-1}(u^{2H}+v^{2H})^{K-2}dudv.
\end{eqnarray*}%
Using similar arguments as in the subfractional case, we obtain
\begin{eqnarray*}
&&\left\vert \overline{E}_{1}(n)-\frac{2^{3-2K}\rho (0)^{2}\e^{-2\theta }}{1-%
\e^{-2\theta }}\right\vert +\left\vert \overline{E}_{2}(n)-2^{3-K}\rho
(0)\sum_{i=1}^{\infty }\e^{-\theta i}\rho (i)-2^{3-K}\rho (0)^{2}\frac{\e%
^{-2\theta }}{1-\e^{-2\theta }}\right\vert \\
&&+\left\vert \overline{E}_{3}(n)\right\vert +\left\vert \overline{E}%
_{4}(n)-2\sum_{i=1}^{\infty }(\rho (i)-\rho (0)\e^{-\theta
i})^{2}\right\vert +\left\vert \overline{E}_{5}(n)\right\vert +\left\vert
\overline{E}_{6}(n)\right\vert \\
&\leqslant &C\psi _{HK}(n)
\end{eqnarray*}%
which completes the proof of the first inequality of Lemma \ref{variance
case bifBm}.\newline
By using similar techniques as in above we also obtain for $HK=3/4$,
\begin{equation*}
\left\vert \frac{1}{n\log (n)}\sum\limits_{\underset{k\neq j}{j,k=1}%
}^{n}(E[X_{k}X_{j}])^{2}-\frac{9}{32}\frac{1}{\theta ^{4}}\right\vert
\leqslant C\log (n)^{-1}
\end{equation*}%
which finishes the proof.
\end{proof}

\end{document}